\documentclass[11pt]{article}

\usepackage{geometry}
\usepackage{amsthm, amsmath, amsfonts, amssymb, mathtools, mathrsfs}
\usepackage{url, hyperref}
\usepackage{xcolor}
\usepackage{booktabs}
\usepackage{graphicx}

\newtheorem{theorem}{Theorem}[section]
\newtheorem*{theorem*}{Theorem}
\newtheorem{lemma}[theorem]{Lemma}
\newtheorem{proposition}[theorem]{Proposition}

\theoremstyle{remark}
\newtheorem{definition}[theorem]{\bfseries Definition}
\newtheorem{remark}[theorem]{\bfseries Remark}

\newtheorem{conjecture}[theorem]{\bfseries Conjecture}
\newtheorem*{acknowledgements}{\bfseries Acknowledgements}

\makeatletter
\newcommand{\subjclass}[2][2020]{%
	\let\@oldtitle\@title%
	\gdef\@title{\@oldtitle\footnotetext{#1 \emph{Mathematics subject classification}: #2}}%
}
\newcommand{\keywords}[1]{%
	\let\@@oldtitle\@title%
	\gdef\@title{\@@oldtitle\footnotetext{\emph{Key words and phrases}: #1}}%
}
\makeatother
\renewcommand{\ge}{\geqslant}
\renewcommand{\le}{\leqslant}

\newcommand{\bbZ}{\mathbb{Z}}
\newcommand{\bbQ}{\mathbb{Q}}

\newcommand{\bi}{\mathbf{i}}

\newcommand{\rd}{\mathrm{d}}

\numberwithin{equation}{section}

\allowdisplaybreaks

\title{The linear independence of $1$, $\zeta(3)$, and $\zeta(4)$}
\author{Li Lai}
\date{}

\subjclass[2020]{Primary 11J72; Secondary 11M06, 33C20.}
\keywords{Riemann zeta values, odd zeta values, linear independence, irrationality, Hankel determinant.}

\begin{document}
\maketitle

\begin{abstract}
In a recent breakthrough, Fauzan proved the irrationality of $\zeta(5)$.
In this paper, we combine Fauzan's determinant method with classical simultaneous rational approximations to zeta values studied by Zudilin and Dauguet--Zudilin to prove the linear independence of $1$, $\zeta(3)$, and $\zeta(4)$ over $\mathbb{Q}$. 
As a corollary, $\zeta(3)/\pi^4$ is irrational.
\end{abstract}

\section{Introduction}

The special values of the Riemann zeta function
\[
\zeta(s) \coloneq \sum_{m=1}^{\infty} \frac{1}{m^s} \quad (s=2,3,\ldots)
\]
are natural mathematical constants.
They appear frequently in number theory, combinatorics, and theoretical physics.
In the eighteenth century, Euler \cite{Euler1740} proved that $\zeta(2k)/\pi^{2k} \in \bbQ^{\times}$ for any positive integer $k$.
In contrast, the arithmetic nature of \emph{odd zeta values} $\zeta(2k+1)$ ($k \in \bbZ_{>0}$) remains mysterious.
A folklore conjecture predicts the following.
\begin{conjecture}\label{conj}
$\pi,\zeta(3),\zeta(5),\ldots,\zeta(2k+1),\ldots$ are algebraically independent over $\bbQ$.
\end{conjecture}
Conjecture \ref{conj} remains far out of reach;
very little is known.
Ap\'ery made the first breakthrough: 
\begin{theorem}[Ap\'ery \cite{Apery1979}, 1979]\label{thm_Ap}
The real number $\zeta(3)$ is irrational.
\end{theorem}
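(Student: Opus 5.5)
We follow the classical approach via a sequence of explicit rational approximations, in the form given by Beukers' integrals. Set $d_n \coloneq \operatorname{lcm}(1,2,\ldots,n)$ and let $P_n(x) \coloneq \frac{1}{n!}\frac{\rd^n}{\rd x^n}\bigl(x^n(1-x)^n\bigr)$ be the shifted Legendre polynomial. The object of study is
\[
I_n \coloneq \int_0^1\!\!\int_0^1\!\!\int_0^1 \frac{\bigl(x(1-x)y(1-y)w(1-w)\bigr)^n}{\bigl(1-(1-xy)w\bigr)^{n+1}}\,\rd x\,\rd y\,\rd w .
\]
The proof will rest on three facts about $I_n$. First, an arithmetic statement: $I_n = a_n\zeta(3) - b_n$ with $a_n \in \bbZ$ and $d_n^3 b_n \in \bbZ$. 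Second, positivity: $I_n>0$. Third, an upper bound: $I_n \le C\,(\sqrt2-1)^{4n}$. Granting these, suppose $\zeta(3)=p/q$. Then $q d_n^3 I_n$ is a positive integer for every $n$. On the other hand, the prime number theorem gives $d_n \le e^{(1+o(1))n}$. Hence $0<q d_n^3 I_n \le Cq\,e^{3n(1+o(1))}(\sqrt2-1)^{4n}\to 0$, since $e^3(\sqrt2-1)^4\approx 20.1\times 0.0294<1$. This is a contradiction.

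For the arithmetic step, the starting point is the identity
\[
\int_0^1\frac{\rd w}{1-(1-xy)w}=\frac{-\log(xy)}{1-xy}.
\]
The $w$-integration is performed after the substitution $w\mapsto (1-z)/(1-(1-xy)z)$. This converts $I_n$ into
\[
\int_0^1\!\!\int_0^1 \frac{(-\log xy)\,P_n(x)P_n(y)}{1-xy}\,\rd x\,\rd y,
\]
up to $n$-fold integration by parts in $x$. The integration by parts is justified because the boundary terms vanish, thanks to the factor $x^n(1-x)^n$. One then expands $P_n(x)P_n(y)$, which has integer coefficients, into monomials $x^r y^s$. The remaining ingredient is the elementary evaluation of
\[
\int_0^1\!\!\int_0^1 \frac{-\log(xy)\,x^r y^s}{1-xy}\,\rd x\,\rd y ,
\]
obtained by expanding $\frac{1}{1-xy}$ as a geometric series and using $-\log(xy)\,x^{r+k}y^{s+k}=-\frac{\partial}{\partial t}(xy)^{t}x^{r+k}y^{s+k}\big|_{t=0}$. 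For $r=s$ this integral equals $2\bigl(\zeta(3)-\sum_{m\le r}m^{-3}\bigr)$. For $r>s$ it is a rational number whose denominator divides $d_r^3$, coming from the differences of partial sums $\sum_{m=s+1}^{r}$ of $m^{-2}$ and $m^{-3}$ divided by $(r-s)$. Summing these contributions yields the claimed shape of $I_n$ with $a_n\in\bbZ$ and $d_n^3 b_n\in\bbZ$, after collecting the factor $2$ into $a_n$.

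For the size estimate, the integrand is positive, which gives $I_n>0$. It is also bounded by $\max F^n$ times the integral of $1/(1-(1-xy)w)$, where
\[
F(x,y,w)=\frac{x(1-x)y(1-y)w(1-w)}{1-(1-xy)w}.
\]
That remaining integral equals $2\zeta(3)$ and is therefore finite. I expect the main obstacle to be the maximization of $F$ on the open cube. By symmetry I would set $x=y$ and solve the critical-point equations: the equation in $w$ gives $w=1/(1+\sqrt{xy})$, and substituting back leads to $x=y=\sqrt2-1$. This yields $\max F=(\sqrt2-1)^4$, and a careful calculus check is needed to confirm that this critical point is the global maximum. Finally, the bound $d_n=e^{\psi(n)}$ with $\psi(n)\sim n$, which follows from the prime number theorem, completes the argument. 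In fact Chebyshev's bound $\psi(n)\le 1.04\,n$ for large $n$ already suffices, since $e^{3.12}(\sqrt2-1)^4<1$.
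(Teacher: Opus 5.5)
The paper gives no proof of this statement: Ap\'ery's theorem is imported as a black box, used only to guarantee $b\neq 0$, and Beukers' argument is cited as the best-known proof. Your proposal is exactly Beukers' proof and is correct in outline. Four points should be tightened.

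\emph{Integration by parts in both variables.} The chain of identities needs $n$-fold integration by parts in $y$ as well as in $x$. Start from $\frac{-\log xy}{1-xy}=\int_0^1\frac{\rd z}{1-(1-xy)z}$. The $n$-fold integration by parts in $x$ produces the integrand $\frac{(xyz)^n(1-x)^nP_n(y)}{(1-(1-xy)z)^{n+1}}$. The substitution $w=\frac{1-z}{1-(1-xy)z}$ turns this into $\frac{(1-x)^n(1-w)^nP_n(y)}{1-(1-xy)w}$. Only after a further $n$-fold integration by parts in $y$ do you reach the integrand of $I_n$. With integration by parts in $x$ alone, as you wrote it, you never get $I_n$.

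\emph{Off-diagonal integrals.} For $r>s$ the double integral equals exactly $\frac{1}{r-s}\sum_{m=s+1}^{r}\frac{1}{m^2}$. No partial sums of $m^{-3}$ occur off the diagonal. This matters: a term $\frac{1}{r-s}\sum m^{-3}$ would only give denominators dividing $d_r^4$. Since $e^4(\sqrt2-1)^4\approx 1.6>1$, the final estimate would then fail.

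\emph{Maximum of $F$.} You do not need a global critical-point analysis on the cube. Maximizing exactly in $w$ gives $\max_w F=\frac{x(1-x)y(1-y)}{(1+\sqrt{xy})^2}$. The AM--GM inequality $(1-x)(1-y)\le(1-\sqrt{xy})^2$ then bounds this by $\bigl(\frac{t(1-t)}{1+t}\bigr)^2$ with $t=\sqrt{xy}\in(0,1)$. That one-variable function attains its maximum $(\sqrt2-1)^4$ at $t=\sqrt2-1$. This closes the step you flagged, with no boundary analysis.

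\emph{Attribution of the $\psi$ bound.} The bound $\psi(x)<1.04\,x$ is due to Rosser--Schoenfeld, not Chebyshev. Chebyshev's own constant, about $1.106$, also suffices, since $e^{3.32}(\sqrt2-1)^4\approx 0.81<1$.
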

Ap\'ery’s announcement in 1978 came as a surprise. 
A few months later, Beukers \cite{Beukers1979} gave an elegant proof of the irrationality of $\zeta(3)$, which is perhaps the most widely known one.
Nowadays, many other proofs and explanations of Ap\'ery's theorem (Theorem \ref{thm_Ap}) are available; see \cite{Beukers1981,Beukers1987,BP1984,Brown2016,DZ2014,Fischler2004,Gutnik1979,Gutnik1983,Hata2000,KR2007,Nesterenko1996,Nesterenko2009,vanderPoorten1979,Prevost1996,Reyssat1979,RV2001,SoriaLorente2014,Sorokin1994,Sorokin1998,Sorokin2002,Zudilin2009,Zudilin2017}.

The next breakthrough was made by Ball and Rivoal:

\begin{theorem}[Rivoal \cite{Rivoal2000}, 2000; Ball--Rivoal \cite{BR2001}, 2001]\label{thm_BR}
As $s \to +\infty$ through odd integers, 
\[
\dim_{\bbQ}\operatorname{Span}_\bbQ \{ 1,\zeta(3),\zeta(5),\ldots,\zeta(s) \} \ge \frac{1-o(1)}{1+\log 2} \cdot \log s.
\]
In particular, there are infinitely many irrational odd zeta values.
\end{theorem}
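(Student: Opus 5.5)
The plan is to follow the Ball--Rivoal construction. Fix an odd $s$ and a parameter $r$ with $1\le r<s/2$. For each large even $n$, consider the very-well-poised rational function
\[
R_n(t) \coloneq n!^{\,s-2r}\,\frac{(t-rn)_{rn}\,(t+n+1)_{rn}}{(t)_{n+1}^{\,s}}\Bigl(t+\frac n2\Bigr),
\]
where $(x)_m$ is the Pochhammer symbol. Put $S_n \coloneq \sum_{t\ge1} R_n(t)$.

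\textbf{Step 1 (linear form).} Expand $R_n$ in partial fractions,
\[
R_n(t)=\sum_{i=1}^{s}\sum_{k=0}^{n}\frac{a_{i,k}}{(t+k)^i}.
\]
Summing over $t$ gives
\[
S_n=\rho_0+\sum_{i=2}^{s}\rho_i\zeta(i), \qquad \rho_i=\sum_k a_{i,k}.
\]
The symmetry $R_n(-t-n)=-R_n(t)$ (this is why $n$ is even and $s$ odd) yields $a_{i,n-k}=(-1)^{i+1}a_{i,k}$. Hence $\rho_i=0$ for every even $i$, and $\rho_1=0$ because $R_n(t)=O(t^{-2})$. So $S_n$ is a $\bbQ$-linear form in $1,\zeta(3),\dots,\zeta(s)$ alone.

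\textbf{Step 2 (arithmetic).} Using the standard Leibniz-rule estimates for derivatives of products of factorial-type factors, one shows $d_n^{\,s-i}a_{i,k}\in\bbZ$, where $d_n=\operatorname{lcm}(1,\dots,n)$. Hence $d_n^{\,s}\rho_i\in\bbZ$ for all $i$, and $d_n^s=e^{sn(1+o(1))}$ by the prime number theorem. For the coefficient sizes, Cauchy's formula on a suitable circle gives $\limsup|\rho_i|^{1/n}\le 2^{s-2r}(2r+1)^{2r+1}$.

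\textbf{Step 3 (analysis), the main obstacle.} We need the exact asymptotic $S_n^{1/n}\to\phi_{r,s}>0$, with
\[
\phi_{r,s}\approx (2r)^{\,2r}\,2^{\,2r}\big/ r^{\,s-2r}
\]
up to lower-order factors. Crucially, we also need $S_n\neq0$. First I would show $S_n>0$ directly: all terms with $t\ge rn+1$ are positive, and the terms with $t\le rn$ vanish. Then write $S_n$ as a sum of $n$-th powers of a function $f(x)$, $t=xn$, and apply Laplace's method: $S_n^{1/n}\to\max_x f(x)$. Alternatively, write $S_n$ as a complex integral via residues and use the saddle-point method.

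\textbf{Step 4 (Nesterenko's criterion).} Let $\delta$ be the dimension of $\operatorname{Span}_\bbQ\{1,\zeta(3),\ldots,\zeta(s)\}$. Nesterenko's linear independence criterion applies to the linear forms $d_n^sS_n$: they have integer coefficients of size $\le Q^{n(1+o(1))}$ and absolute value $\alpha^{n(1+o(1))}$ with $0<\alpha<1$. It yields
\[
\delta\ge 1-\frac{\log\alpha}{\log Q},
\qquad
\alpha=e^{s}\phi_{r,s},\quad Q=e^{s}2^{s-2r}(2r+1)^{2r+1}.
\]

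\textbf{Step 5 (optimization).} Take $r\approx s/\log^2 s$. Then
\[
-\log\alpha = (s-2r)\log r - s - O(r\log r)\sim s\log s,
\qquad
\log Q \sim s(1+\log 2).
\]
Therefore $\delta\ge(1-o(1))\log s/(1+\log 2)$, which proves Theorem~\ref{thm_BR}.

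The delicate points are Step 3, namely the precise exponential rate together with nonvanishing, and verifying the integrality denominators in Step 2. Everything else is bookkeeping.
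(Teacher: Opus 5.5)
\textbf{No proof in the paper.} The paper does not prove this theorem; it only quotes it from Rivoal and Ball--Rivoal. So the comparison is with the original argument, and your outline follows its architecture: hypergeometric linear forms, $d_n$-integrality, Laplace asymptotics, Nesterenko's criterion, and $r\approx s/\log^2 s$.

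\textbf{The gap: a parity error in Step~1.} As written, this error breaks the argument. Under $t\mapsto -t-n$ you have:
\begin{itemize}
\item $(t)_{n+1}\mapsto(-1)^{n+1}(t)_{n+1}$;
\item the product $(t-rn)_{rn}(t+n+1)_{rn}$ is invariant, since its two factors are swapped and each picks up $(-1)^{rn}$;
\item $t+\frac n2\mapsto-(t+\frac n2)$.
\end{itemize}
Hence
\[
R_n(-t-n)=(-1)^{s(n+1)+1}R_n(t)=+R_n(t)\qquad(s\text{ odd},\ n\text{ even}),
\]
not $-R_n(t)$. Uniqueness of the partial fraction expansion then gives $a_{i,n-k}=(-1)^{i}a_{i,k}$, so $\rho_i=(-1)^i\rho_i$. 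It is therefore the \emph{odd} coefficients $\rho_3,\rho_5,\dots,\rho_s$ that vanish. Your $S_n$ is then a linear form in $1,\zeta(2),\zeta(4),\dots,\zeta(s-1)$, i.e.\ in powers of $\pi^2$. Nesterenko's criterion applied to it says nothing about odd zeta values.

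\textbf{The repair.} It is easy, but it must be made. You have two options:
\begin{itemize}
\item Delete the factor $t+\frac n2$. Then $R_n(-t-n)=(-1)^{s(n+1)}R_n(t)=-R_n(t)$ for $s$ odd and $n$ even, which is the relation you want.
\item Keep the factor and make the exponent even, using $n!^{\,s+1-2r}\,(t+\frac n2)\,(t-rn)_{rn}(t+n+1)_{rn}/(t)_{n+1}^{\,s+1}$. Then $\rho_{s+1}=0$ as well, and the common denominator becomes $d_n^{\,s+1}$.
\end{itemize}
With either choice, Steps~2--5 go through with at most $s$ replaced by $s+1$ in a few places. This changes neither $\log Q\sim s(1+\log 2)$ nor $-\log\alpha\sim s\log s$.

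\textbf{A note on Step~3.} You do not need the precise shape of $\phi_{r,s}$. Besides the existence of the limit, which Nesterenko's criterion requires, an upper bound suffices. For example, $\log\phi_{r,s}\le r\log 3-(s-2r)\log(r+1)$ follows from the elementary bounds
\[
\frac{n!}{(t)_{n+1}}\le(r+1)^{-n},\qquad (t-rn)_{rn}(t+n+1)_{rn}\le 3^{rn}t^{2rn}\qquad(t\ge rn+1).
\]
This already gives $-\log\alpha\ge(1-o(1))\,s\log s$.
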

The dimension lower bound in Theorem \ref{thm_BR} has been significantly improved to $0.21\sqrt{s/\log s}$ by Fischler \cite{Fischler2026} for all sufficiently large odd integers $s$.

Before the AI age, a remarkable result was achieved by Calegari, Dimitrov, and Y. Tang:
\begin{theorem}[Calegari--Dimitrov--Tang \cite{CDT2024+}, 2024$^+$]
The three real numbers $1$, $\zeta(2)$, and $L(2,\chi_{-3})$ are linearly independent over $\bbQ$, where $L(2,\chi_{-3})=1^{-2} -2^{-2} + 4^{-2} - 5^{-2} + \cdots$ is a Dirichlet $L$-value.
\end{theorem}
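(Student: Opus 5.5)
The plan follows the arithmetic holonomy strategy of Calegari, Dimitrov and Tang rather than explicit Apéry-type approximations. The linear forms I would use come from power series, not integer sequences. First I would write down a small family of explicit power series $f_1(x),\dots,f_m(x)\in\bbQ[[x]]$ with nearly integral coefficients: integral after multiplying the $n$-th coefficient by a denominator like $\operatorname{lcm}(1,\dots,n)^2$. They should be built from hypergeometric/Beukers-type integrals whose coefficients are linear forms in $1,\zeta(2),L(2,\chi_{-3})$. The key structural fact is that these series are solutions of a single Fuchsian differential equation of low order, with singularities only at a few algebraic points such as $0,1,\tfrac{1}{9},\infty$. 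The monodromy around those points produces the values $\zeta(2)$ and $L(2,\chi_{-3})$ as periods.

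Next, I would argue by contradiction. Suppose there is a nontrivial relation $a+b\,\zeta(2)+c\,L(2,\chi_{-3})=0$ with $a,b,c\in\bbZ$. Then some explicit $\bbZ$-linear combination $F$ of the series (and of their products with $\log$-type companions) has trivial monodromy at one singular point. Consequently $F$ extends holomorphically across that point. This produces a new nearly integral power series that is holomorphic on a domain strictly larger than the one available unconditionally. Repeating with the other functions in the family, the relation forces a whole $\bbQ(x)$-linear space of such "overconvergent" nearly integral functions whose dimension exceeds some threshold $N$.

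The decisive input is a holonomy-rank bound, a quantitative refinement of the André–Bost–Chambert-Loir algebraicity criteria. Consider the set of formal power series whose coefficients have denominators bounded by $e^{\kappa n}$ and which continue analytically to a simply connected domain $\Omega\ni 0$, allowing finitely many branch points. Its dimension over $\bbQ(x)$ is at most a bound determined by $\kappa$ and the conformal radius (or, more precisely, a capacity or Green-function integral) of the map $\varphi\colon\mathbb{D}\to\Omega$. I would prove this bound with a Siegel-lemma/auxiliary-polynomial argument. One constructs a small-height $\bbZ[x]$-combination of products of the functions that vanishes to high order at $0$. One then pulls it back by $\varphi$ and applies Jensen's formula together with a Nevanlinna-type growth estimate on circles $|z|=r\to1$. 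Finally, one plays the resulting upper bound against the lower bound $|{\rm coefficient}|\ge$ $1/{\rm denominator}$ for the first nonzero coefficient.

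The main obstacle is the last step. One must choose an explicit $\varphi$, built from a modular or triangle-group uniformization of $\mathbb{P}^1$ minus the singular points, that is good enough. Concretely, the capacity estimate must give a holonomy bound strictly smaller than the dimension forced by the hypothetical relation. This is a delicate numerical inequality, because the naive Beukers-type approximations just barely fail. I would first try $\varphi$ given by a Hauptmodul for a genus-zero congruence subgroup such as $\Gamma_1(6)$ or $\Gamma_0(6)$, which naturally encodes both $\zeta(2)$ and $L(2,\chi_{-3})$. I would then optimize the auxiliary construction, using the Bost–Charles refinement with the full Green function rather than only the conformal radius, until the numerical margin closes.
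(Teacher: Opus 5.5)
The paper contains no proof of this statement. It is quoted from Calegari--Dimitrov--Tang as background, and the paper's own method (Fauzan's Hankel determinants) is applied only to $1,\zeta(3),\zeta(4)$. So your proposal has to stand on its own, and as written it is an outline of the holonomy-bound strategy, not a proof. The first gap is that you never specify the series, the differential equation, or the singular point. Your claim that the relation $a+b\zeta(2)+cL(2,\chi_{-3})=0$ makes some $\mathbb{Q}$-combination $F$ unramified there is exactly what needs proof. If $T$ is the local monodromy there, $F$ is unramified only if every component of $(T-1)F$ vanishes, but one relation imposes only one linear condition on the coefficients of $F$. You would therefore need the structural fact that all ramification of the family at that point is governed by a single period vector proportional to $(1,\zeta(2),L(2,\chi_{-3}))$. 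This includes the ramification of the $\log$-type companions you multiply by, which contribute their own monodromy. Nothing in the proposal establishes this. Your description of the $f_i$ is also ambiguous. If their coefficients are linear forms in $\zeta(2)$ and $L(2,\chi_{-3})$, they are not in $\mathbb{Q}[[x]]$ unless you already use the relation. You must decide whether the constants enter as values of integrals (Apéry style) or as connection constants of rational series (holonomy style), since these lead to different arguments. Finally, ``repeating with the other functions'' does not produce a large space for free. The holonomy bound counts functions that become holomorphic after composition with one and the same $\varphi$, so you must exhibit $d$ such functions and prove they are $\mathbb{Q}(x)$-linearly independent.

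The most serious gap is that the theorem lives in the numerical inequality ``holonomy bound for $\varphi$ $<d$,'' and you explicitly leave it open (``I would first try a Hauptmodul \dots\ until the numerical margin closes''). Without an explicit $\varphi$, an explicit denominator exponent, and a computed value of the relevant growth or capacity integral showing the bound is strictly below the forced dimension, you reach no contradiction. The bound itself would also need to be stated correctly, in two respects. First, it must use common denominators of the first $n$ coefficients, such as $\operatorname{lcm}(1,\dots,n)^s$, because that is what controls $\sum P_i f_i$ with $P_i\in\mathbb{Z}[x]$; a bound on each coefficient's denominator separately is not enough. Second, its hypothesis should be that $f\circ\varphi$ is holomorphic (or meromorphic) on $\mathbb{D}$ for some $\varphi\colon\mathbb{D}\to\mathbb{C}$ with $\varphi(0)=0$. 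Continuation to a ``simply connected domain with finitely many branch points'' does not work: by the monodromy theorem, such a domain either excludes branch points or is not simply connected. The strategy is plausible, but the proposal gives no argument for the two steps that carry the proof: the extra analytic continuation forced by the relation, and the quantitative bound beating the resulting dimension.
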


Although there are many partial results towards the irrationality of odd zeta values \cite{BZ2022+,Fischler2017,FSZ2018,FSZ2019,FZ2010,Lai2024+,Lai2026,LY2020,LZ2022,Rivoal2002,RZ2020,SoriaLorente2018,Sprang2018+,Zudilin2001,Zudilin2002a,Zudilin2002b,Zudilin2004,Zudilin2018}, the irrationality of $\zeta(5)$ remained open before the AI age.
The closest results to the irrationality of $\zeta(5)$ were ``at least one of $\zeta(5),\zeta(7),\zeta(9),\zeta(11)$ is irrational'' by Zudilin \cite{Zudilin2001}, and ``$0<|\zeta(5)-p_n/q_n|<q_n^{-0.86}$ for an explicit rational sequence $p_n/q_n$'' by Brown--Zudilin \cite{BZ2022+}.
In the AI age, Fauzan recently made the breakthrough: 

\begin{theorem}[Fauzan \cite{Fauzan2026+}, 2026$^+$]
The real number $\zeta(5)$ is irrational.
\end{theorem}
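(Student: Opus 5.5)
This is Fauzan's theorem, cited from \cite{Fauzan2026+}, so the paper takes it as input rather than proving it. Still, the abstract says the paper combines ``Fauzan's determinant method'' with classical simultaneous approximations. That suggests the following plan for proving the irrationality of $\zeta(5)$ itself, which I would expect to reappear later for $1,\zeta(3),\zeta(4)$.

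\textbf{Step 1: a family of linear forms.} I would build linear forms from hypergeometric-type rational functions, in the style of Ball--Rivoal and Zudilin. Take
\[
R_n(t)=\frac{\prod_j (t\pm j)\text{-type Pochhammer factors}}{(t)_{n+1}^{5}}
\]
with well-poised symmetry, and sum $\sum_{t\ge1} R_n^{(k)}(t)$ for several derivative orders or shifted parameters $k=0,1,\dots,K$. The symmetry kills the even zeta values, so this produces forms
\[
r_{n,k}=a_{n,k}\zeta(5)+b_{n,k}\zeta(3)+c_{n,k}.
\]
The denominators are controlled by $d_n^{5}$, where $d_n=\operatorname{lcm}(1,\dots,n)$, and the asymptotics come from saddle-point estimates of the integral representations.

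\textbf{Step 2: Fauzan's determinant.} Rather than asking a single form to be small, I would form a determinant: either a Hankel-type matrix built from the $r_{n,k}$, or the $3\times3$ matrix of coefficient vectors $(a_{n,k},b_{n,k},c_{n,k})$ together with the linear forms.

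Suppose $\zeta(5)=p/q$. Then each $q\,d_n^{5}\,r_{n,k}$ becomes a linear combination of integers and $q\,d_n^{5}\,b_{n,k}\zeta(3)$. Taking a suitable $2\times2$ minor of the forms eliminates $\zeta(3)$ and yields an integer $D_n$.

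\textbf{Step 3: size estimates.} I would then show two things:
\begin{itemize}
\item $|D_n|<1$ for large $n$, using the exponential smallness of the forms against the growth of the coefficients and the factor $d_n^{10}\approx e^{10n}$;
\item $D_n\neq0$, via a nonvanishing argument such as the Hankel determinant being a nonzero polynomial or having an explicit product evaluation.
\end{itemize}
Together these give the contradiction.

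The main obstacle is the nonvanishing combined with the arithmetic saving. Minors of Hankel matrices typically exhibit cancellation, which is where the extra decay comes from that beats the $e^{10n}$ loss; proving that cancellation, and that the determinant is still nonzero, is the crux. I would try to obtain a closed product formula for the determinant via orthogonal-polynomial or Pad\'e structure.
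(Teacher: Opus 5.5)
The paper does not prove this theorem. It cites it from Fauzan, sketches the method in the introduction, and then adapts it to $1,\zeta(3),\zeta(4)$. Measured against that sketch and adaptation, your plan has a genuine gap at its core.

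In Step 2 you eliminate $\zeta(3)$ with a $2\times 2$ minor, or use a fixed $3\times3$ matrix. Then $D_n$ is, up to the factor $q\,d_n^{10}$, a single linear form $b_{n,2}r_{n,1}-b_{n,1}r_{n,2}$ in $1$ and $\zeta(5)$. The first bullet of Step 3 then amounts to $d_n^{10}\,|b_{n,k}|\,|r_{n,k'}|\to 0$. That is exactly the classical criterion, and no known hypergeometric construction satisfies it. This is why the best results before Fauzan were Zudilin's ``one of $\zeta(5),\ldots,\zeta(11)$ is irrational'' and Brown--Zudilin's exponent $0.86$. You give no estimate showing $|D_n|<1$. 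The appeal to ``cancellation in minors of Hankel matrices'' supplies no mechanism, and a determinant of bounded size has essentially the same denominators as its entries.

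The missing idea is that the saving in Fauzan's method is \emph{arithmetic} ($p$-adic), and it needs a determinant whose size grows with $n$. In this paper one takes the Gram matrix $\bigl(\mathcal{L}_n(t^{i+j})\bigr)_{0\le i,j<3n}$ of the bilinear form $(P,Q)\mapsto\mathcal{L}_n(PQ)$, normalised by $\prod_{i<3n} i!^2$. For each prime $p$, the sum over the poles $-k$ is split by the residue class $\rho$ of $k$ modulo $p$. After recentring at $t=-\rho$, each class gives a matrix of rank at most $2+2\lfloor(6n-\rho)/p\rfloor$, whose $(i,j)$ entry has $p$-adic valuation at least $i+j$ minus a bounded amount. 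Expanding the determinant over the classes bounds $v_p(J_n)$ below by a minimum of $\sum_\rho F^{\rho\bmod p}(m_\rho)$, which is far better than the entrywise bound. Summing over primes with the prime number theorem gives $|J_n|\ge e^{-30.84n^2}$ whenever $J_n\neq0$. This is beaten by the upper bound $|J_n|\le e^{-31.61n^2}$, obtained from the contour integral, Heine's identity and Meixner--Pollaczek polynomials.

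Your nonvanishing step is also left open, and a closed product formula is not what makes it work here. The paper instead takes an auxiliary prime $q\equiv1\pmod 9$ with $q>|b|$ and a Hermite-interpolation basis. In that basis the Gram matrix is congruent modulo $q$ to an invertible block-diagonal matrix, so its determinant is a $q$-adic unit.
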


Fauzan’s proof uses Hankel determinants, which were previously studied by Haynes--Zudilin \cite{HZ2015} and Zudilin \cite{Zudilin2017}. 
Fauzan's proof is also related to some ideas of Brown \cite{Brown2026}. 
Roughly speaking, Hankel determinants yield finer $p$-adic estimates. 

The following is a simple explanation of Fauzan's key observation.
Let us first consider a toy model: the Vandermonde determinant $I_n = \det(i^{j})_{0 \le i,j < n}$. 
For a prime $p$, considering each entry gives the trivial lower bound on the $p$-adic valuations: $v_p(i^{j}) \ge 0$, so $v_p(I_{n}) \ge 0$. 
In fact, $v_p(I_n)$ can be much larger than $0$ when $n$ is large.
In the literature, there are many known linear combinations of $1$ and $\zeta(5)$ with rational coefficients, of the form $a_{i,j} + b_{i,j}\zeta(5)$. 
These linear combinations do not suffice to prove the irrationality of $\zeta(5)$.
Fauzan arranged a suitable sequence of such linear forms into a Hankel determinant $\widetilde{I}_n = \det(a_{i,j} + b_{i,j}\zeta(5))_{0 \le i, j < n}$. 
Using basic properties of determinants, and assuming that $\zeta(5) \in \mathbb{Q}$, Fauzan was able to show that $v_p(\widetilde{I}_n)$ is much larger than the entrywise lower bound $n \cdot \min v_p(a_{i,j} + b_{i,j}\zeta(5))$.
This is the key reason why Fauzan's proof \cite{Fauzan2026+} works.

Fauzan's determinant method is quite flexible. 
It quickly led to several further results: 
\begin{itemize}
\item Kilani \cite{K2026+} proved the irrationality of the $2$-adic zeta value $\zeta_2(7)$;
\item Liu \cite{Liu2026+} formalized a proof in Lean 4 of the irrationality of $\operatorname{Li}_2(1/n)$ for any $n \in \bbZ\setminus\{0,1\}$;
\item Q. Tang \cite{Tang2026+} formalized a proof in Lean 4 of the linear independence of $1$, $\zeta(2)$, and $\zeta(3)$ over $\mathbb{Q}$;
\item OpenAI \cite{OAI2026+} formalized a proof in Lean 4 of the irrationality of Catalan's constant $L(2,\chi_{-4})$;
\item Kilani \cite{K2026+b} formalized a proof in Lean 4 of the irrationality of the $3$-adic zeta value $\zeta_3(5)$.
\end{itemize}
The author apologizes for any possible omissions. 
In the AI age, many recent results have not appeared on arXiv, which makes it difficult to keep track of all of them.

In this paper, we obtain another follow-up result.
Our main theorem is the following:
\begin{theorem}\label{thm_main}
The three real numbers $1$, $\zeta(3)$, and $\zeta(4)$ are linearly independent over $\bbQ$.
\end{theorem}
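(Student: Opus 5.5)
The plan is to argue by contradiction. Suppose $a+b\zeta(3)+c\zeta(4)=0$ with $(a,b,c)\in\bbZ^3\setminus\{0\}$. Euler's theorem gives $\zeta(4)=\pi^4/90$, so $c\ne 0$ is forced unless $\zeta(3)\in\bbQ$, which Theorem \ref{thm_Ap} excludes. Hence we may write $\zeta(4)=r+s\zeta(3)$ with $r,s\in\bbQ$, $s\neq 0$, fixing a common denominator $D$.

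As input I will take the Zudilin / Dauguet--Zudilin simultaneous approximations. These come from a very-well-poised hypergeometric series
\[
R_n(t)=\frac{\prod (t\pm \text{shifts})}{(t)_{n+1}^{k}}
\]
with a suitable symmetry. Summing $\sum_{t} R_n^{(\ell)}(t)$, or a two-parameter family $R_{n,i,j}(t)$ depending on shifts $i,j$, produces linear forms
\[
L_{i,j}=A_{i,j}+B_{i,j}\zeta(3)+C_{i,j}\zeta(4)
\]
with explicit denominators $d_n^{4}$ times bounded factorial-type factors. The parity of the rational function kills the unwanted zeta values $\zeta(2)$ and $\zeta(5)$, or alternatively one takes a derivative as Dauguet--Zudilin do to retain both $\zeta(3)$ and $\zeta(4)$. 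The indexing $(i,j)\mapsto i+j$ should be arranged so that the matrix $(L_{i,j})_{0\le i,j<m}$ is Hankel, i.e.\ its entries are moments $\int x^{i+j}\,w(x)$ of one multiple-integral or Barnes-type representation. This Hankel structure is what makes Fauzan's method applicable.

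Under the hypothesis, each entry becomes $(A_{i,j}+rC_{i,j})+(B_{i,j}+sC_{i,j})\zeta(3)$, a linear form in $1,\zeta(3)$ only. The Hankel determinant $\Delta_{m}=\det(L_{i,j})$ is a rational number, since after substitution $\zeta(3)$ is also rational under the hypothesis. The key arithmetic steps are as follows.

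\emph{Step 1.} Bound $|\Delta_m|$ from above. I would use the Heine/Andréief formula to write $\Delta_m$ as an $m$-fold integral with a squared Vandermonde factor. A saddle-point or log-potential (equilibrium measure) computation then gives $|\Delta_m|\le e^{-\alpha n m+o(nm)}$ for $m\asymp n$.

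\emph{Step 2.} Prove that $\Delta_m\neq0$. This should follow from positivity of the weight in the integral representation.

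\emph{Step 3.} Give the denominator bound, following Fauzan. The entries span a space of dimension at most $2$ over $\bbQ$, since they are combinations of $1$ and $\zeta(3)$ with the rational constant $\zeta(3)=p/q$ absorbed. Performing row operations that exploit the polynomial dependence of the coefficients on $i+j$ (they are values of polynomials or products of binomials), one shows that for each prime $p$ in suitable ranges $(\sqrt{n},n]$, $v_p$ of the determinant's denominator is much smaller than the entrywise bound $m\cdot 4\lfloor \log n/\log p\rfloor$. The mechanism, as in the Vandermonde toy model, is that blocks of columns become divisible by powers of $p$ after elimination. Gathering these gains over primes via the prime number theorem, and combining with Step 1, yields $0<|D^m q^m d\,\Delta_m|<1$ for large $n$, which contradicts integrality.

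I expect Step 3 to be the main obstacle, specifically the interplay of $p$-adic gains with the choice of weights (the parameters in the Zudilin-type series). The difficulty is that the archimedean decay from Step 1 must beat the reduced denominator growth. My first attempt would be to mimic Fauzan's $\zeta(5)$ bookkeeping: compute the $p$-adic valuation of the coefficient polynomials via Legendre-type digit counts, then optimize the free parameters numerically until the exponent inequality holds. The extra rational relation introduces only a constant $D^m$, which is negligible.
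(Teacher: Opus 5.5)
\textbf{There is a genuine gap: your Hankel determinant is not rational.} You claim $\Delta_m\in\mathbb{Q}$ because ``after substitution $\zeta(3)$ is also rational under the hypothesis.'' That is false. A single relation $a+b\zeta(3)+c\zeta(4)=0$ only says that $1,\zeta(3),\zeta(4)$ span a $2$-dimensional $\mathbb{Q}$-space, and $\zeta(3)$ is irrational anyway by Ap\'ery. After substituting $\zeta(4)=r+s\zeta(3)$, your entries are $x_{i,j}+y_{i,j}\zeta(3)$ with $y_{i,j}=B_{i,j}+sC_{i,j}$ generally nonzero. So $\Delta_m$ is a polynomial of degree up to $m$ in $\zeta(3)$, not a rational number. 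Neither the denominator bounds of Step 3 nor the final inequality $0<|D^mq^md\,\Delta_m|<1$ (which uses a nonexistent $\zeta(3)=p/q$) has any content.

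Fauzan's method needs every Hankel entry to be rational, just as in the $\zeta(5)$ case, where the hypothesis is $\zeta(5)\in\mathbb{Q}$. The missing idea is to build the linear forms \emph{depending on the hypothetical relation}, so that their $\zeta(3)$- and $\zeta(4)$-coefficients stand in the prescribed ratio and all other zeta values drop out. The paper writes the relation as $a\zeta(3)+b\zeta(4)=c$ with $b\neq0$. It then takes
$\mathcal{L}_n(P)=-3a\sum_{m\ge1}(R_nP)'(m)+b\sum_{m\ge1}(R_nP)''(m)$, where $R_n(t)=\frac{(6n)!^2}{(2n)!^3}\frac{(t-2n)_{2n}^3}{(t)_{6n+1}^2}$.
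The $\zeta(2)$-coefficient $3a\sum_kA_{1,k}$ vanishes because $\deg R_nP\le-3$. The remaining terms combine into $6\bigl(\sum_kA_{2,k}\bigr)\bigl(a\zeta(3)+b\zeta(4)\bigr)=6c\sum_kA_{2,k}$. Hence each entry $\mathcal{L}_n(t^{i+j})$ lies in $\mathbb{Q}$.

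\textbf{Once you make this correction, Step 2 breaks as well.} The weight now involves the hypothetical integers $a,b$ with no sign control. In the paper it is $R_n(z)(\pi/\sin\pi z)^2(3a-2b\pi\cot\pi z)$ on the line $\operatorname{Re}z=2n+\tfrac12$, which is complex-valued, so positivity gives nothing. The paper instead proves nonvanishing arithmetically. For $n=2m$ with $q=9m+1$ prime and $q>|b|$, it passes to a Hermite-interpolation basis. There the Gram matrix of the bilinear form is congruent modulo $q$ to a block-diagonal matrix whose $2\times2$ blocks have determinants $-4b^2b_j^2\in\mathbb{Z}_q^{\times}$, so the determinant is a $q$-adic unit and in particular nonzero.

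Step 3 is too vague to check. In the paper the decisive gain comes from primes $3n<p\le 6n$, by splitting $\mathcal{L}_n$ according to the residue class of the poles modulo $p$. Each local matrix has rank at most $2+2\lfloor(6n-\rho)/p\rfloor$ and picks up extra factors $p^{i+j}$ in the shifted basis $(t+\rho)^i$. Still, it is the first two issues that sink the argument as written.
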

The proof combines Fauzan's determinant method \cite{Fauzan2026+} with simultaneous rational approximations to zeta values studied by Zudilin \cite{Zudilin2002b,Zudilin2007,Zudilin2011,Zudilin2014} and Dauguet--Zudilin \cite{DZ2014}. 
After Fauzan's breakthrough, such a quick result is more like an exercise, as commented by Zudilin and agreed by the author.
Nevertheless, the author decided to write this paper to help himself understand Fauzan’s determinant method. 
As we shall see, the method is elementary, and
the determinant structure strengthens the $p$-adic estimates.

Perhaps the most interesting corollary of Theorem \ref{thm_main} is the irrationality of $\zeta(3)/\pi^4$. 
Note that the irrationality of $\zeta(3)/\pi^2$ follows from \cite{Tang2026+}. 
However, to the best of the author’s knowledge, the irrationality of $\zeta(3)/\pi^3$ remains open.

We give an outline of the proof of Theorem \ref{thm_main}.
Suppose that $a \zeta(3)+b \zeta(4)=c$ for integers $a, b, c$, not all zero. 
In Section \ref{sec_2}, we associate to this relation a sequence of rational Hankel determinants $J_n$. 
Their rationality follows from a partial fraction decomposition of the underlying rational function. 
In Section \ref{sec_3}, an auxiliary-prime argument shows that $J_n \neq 0$ for infinitely many $n$. 
In Section \ref{sec_4}, to estimate the denominators, we group the poles by their residue classes modulo each prime and estimate minors of the resulting local matrices. 
Their rank restrictions and divisibility give substantially stronger bounds than an entrywise estimate. 
After summation over the primes in Section \ref{sec_5}, we obtain 
\[
\sum_{p} v_p(J_n)\log p \ge -(30.83+o(1)) n^2 \quad\text{as } n \to \infty.
\]
On the other hand, in Section \ref{sec_6}, a contour representation, Heine's identity, and orthogonal polynomials give
\[
\left|J_n\right| \le e^{-31.61 \cdot n^2}
\] 
for all sufficiently large $n$. 
These bounds contradict each other along the infinite nonzero subsequence.

\medskip

\noindent\textbf{Declaration of generative AI use}: The author sent the following prompt to ChatGPT (GPT-6 Astra, OpenAI) on October 1, 2026:
\begin{quotation}
Let $R_n(t)=\frac{(t-Bn)_{Bn}^3}{(t)_{An+1}^2}$, and let $\Delta_n$ be the determinant of size $Cn$, whose $(i,j)$-entry is a suitable linear combination of $\sum_{m=1}^{\infty} \frac{\mathrm{d}}{\mathrm{d}t}\left(R_n(t)t^{i+j}\right) |_{t=m}$ and $\sum_{m=1}^{\infty} \frac{\mathrm{d}^2}{\mathrm{d}t^2}\left(R_n(t)t^{i+j}\right) |_{t=m}$, where $A,B,C$ are positive integers such that $3B+2C \le 2A$. 
Can you use this type of construction to prove the linear independence of $1,\zeta(3),\zeta(4)$?
\end{quotation}
After approximately 40 minutes, GPT-6 Astra produced a complete proof.
Then, the author verified and rewrote the proof, and takes full responsibility but no credit for this work.
The author also used DeepSeek (DeepSeek-V4.1-Flash, DeepSeek) to check the grammar.

\medskip

\begin{acknowledgements}
The author thanks Wadim Zudilin for many useful discussions and encouragement.
This work was supported by the Research Foundation for Scholars of Xiamen University (Grant No. X2450218). 
\end{acknowledgements}

\section{Rational functions and determinants}\label{sec_2}

Our proof is by contradiction.
If $1$, $\zeta(3)$, and $\zeta(4)$ are linearly dependent over $\mathbb{Q}$, then there exist integers $a$, $b$, and $c$, not all zero, such that $a\zeta(3)+b\zeta(4)=c$.
By Ap\'ery's theorem (Theorem \ref{thm_Ap}), we must have $b \neq 0$.
Throughout the paper, until we finally obtain a contradiction, we fix a choice of such integers $a$, $b$, and $c$. 
For later reference, we record our assumption below:
\begin{equation}\label{def_abc}
a\zeta(3) + b\zeta(4) = c, \quad a,b,c \in \bbZ, \quad b \neq 0.
\end{equation}

We use $\bi \coloneq \sqrt{-1} \in \mathbb{C}$ to denote the imaginary unit.
The letter $i$ will be frequently used as an integer index.  
Let $t$ denote an indeterminate. 
The Pochhammer symbol is defined by $(t)_n \coloneq t(t+1)\cdots(t+n-1)$ for any $n \in \bbZ_{>0}$, with the convention $(t)_0 \coloneq 1$. 
For any rational function $R(t)=P(t)/Q(t) \in \bbQ(t)$ with $P,Q \in \bbQ[t]$, $Q \neq 0$, define the degree of $R(t)$ to be $\deg R \coloneq \deg P - \deg Q$.
We adopt the convention $\deg 0 \coloneq -\infty$.
For a field $K$, denote by $K[t]_{<n}$ the $K$-linear space $\{ P \in K[t] \mid \deg P < n\}$ for any $n \in \bbZ_{>0}$.

\begin{definition}
For any $n \in \bbZ_{>0}$, define the rational function
\begin{equation}\label{def_Rn(t)}
R_n(t) \coloneq \frac{(6n)!^2}{(2n)!^3} \cdot \frac{(t-2n)_{2n}^3}{(t)_{6n+1}^2} \in \bbQ(t).
\end{equation}
Define the linear operator
\begin{align}
\mathcal{L}_n \colon \mathbb{C}[t]_{<6n} &\longrightarrow \mathbb{C}, \notag \\ 
P &\mapsto  -3a \sum_{m=1}^{\infty} \frac{\rd}{\rd t} \Big( R_n(t)P(t) \Big) \Big|_{t=m} + b \sum_{m=1}^{\infty} \frac{\rd^2}{\rd t^2} \Big( R_n(t)P(t) \Big) \Big|_{t=m}  \label{def_Lambda}.
\end{align}
Since $\deg (R_nP) \le -3$, both series on the right-hand side of \eqref{def_Lambda} converge absolutely in $\mathbb{C}$.
Define the symmetric bilinear form 
\begin{align}
\mathcal{B}_n \colon \bbQ[t]_{<3n} \times \bbQ[t]_{<3n} &\longrightarrow \mathbb{R}, \notag \\
(P,Q) &\mapsto \mathcal{L}_n(P \cdot Q). \label{def_Qn}
\end{align}
Note that the matrix of $\mathcal{B}_n$ with respect to the basis $1,t,t^2,\ldots,t^{3n-1}$ of $\bbQ[t]_{<3n}$ is $\Big(\mathcal{L}_n(t^{i+j})\Big)_{0 \le i,j < 3n}$. 
Define
\begin{equation}\label{def_Jn}
J_n \coloneq \frac{1}{\prod_{i=0}^{3n-1} i!^{2}} \cdot \det\Big(\mathcal{B}_n(t^{i},t^{j})\Big)_{0 \le i,j < 3n} = \frac{1}{\prod_{i=0}^{3n-1} i!^{2}} \cdot \det\Big(\mathcal{L}_n(t^{i+j})\Big)_{0 \le i,j < 3n} \in \mathbb{R}.
\end{equation}
Here $1/\prod_{i=0}^{3n-1} i!^{2}$ is a conventional normalization factor.
\end{definition}

As usual, let $H_k^{(s)} \coloneq \sum_{m=1}^{k} m^{-s}$ denote the $k$th generalized harmonic number of order $s$ for any $k,s \in \bbZ_{>0}$. 
By convention, $H_0^{(s)} \coloneq 0$ for any $s \in \bbZ_{>0}$.
Write $H_k \coloneq H_k^{(1)}$ for $k \in \bbZ_{\ge 0}$.

\begin{lemma}\label{lem_explicit_expression_for_Lambda_n(P)}
For any $P \in \bbQ[t]_{<6n}$, we have
\begin{equation}\label{eqn_explicit_expression_for_Lambda_n(P)}
\mathcal{L}_n(P) = \sum_{k=0}^{6n} \binom{6n}{k}^2\binom{2n+k}{k}^3\Big( U_k \cdot P^\prime(-k) + V_k \cdot P(-k) \Big) \in \bbQ,
\end{equation}
where 
\begin{align}
U_k &= -3aH_{k}^{(2)} - 2b H_{k}^{(3)} \in \bbQ, \label{def_Uj} \\
V_k &= \left( 3H_{2n+k} + 2H_{6n-k} - 5H_k \right)(3aH_{k}^{(2)} + 2b H_{k}^{(3)}) +6\left( c - aH_k^{(3)} - bH_k^{(4)} \right) \in \bbQ. \label{def_Vj}
\end{align}
In particular, $J_n \in \bbQ$.
\end{lemma}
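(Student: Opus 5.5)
We write $F(t)\coloneq R_n(t)P(t)$. The plan is to expand it in partial fractions, sum the two derivative series termwise, and then eliminate the single transcendental combination using \eqref{def_abc}.

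\textbf{Step 1: Partial fractions.} Since $\deg P<6n$, we have $\deg F\le -3$. The numerator $(t-2n)_{2n}^3$ vanishes at $t=1,\dots,2n$, while the poles of $F$ are the double poles at $t=-k$, $0\le k\le 6n$. Hence
\[
F(t)=\sum_{k=0}^{6n}\Big(\frac{A_k}{(t+k)^2}+\frac{B_k}{t+k}\Big),\qquad \sum_k B_k=0 .
\]
The relation $\sum_k B_k=0$ holds because $\deg F\le -2$. Write $G_k(t)\coloneq (t+k)^2R_n(t)$. Then $A_k=G_k(-k)P(-k)$ and $B_k=G_k'(-k)P(-k)+G_k(-k)P'(-k)$.

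We compute these explicitly. With the normalisation $(6n)!^2/(2n)!^3$, the sign bookkeeping (the cube of the numerator contributes $(-1)^{6n}$) gives $G_k(-k)=\binom{6n}{k}^2\binom{2n+k}{k}^3$. Logarithmic differentiation gives
\[
G_k'(-k)/G_k(-k)=3(H_{2n+k}-H_k)+2(H_{6n-k}-H_k)=3H_{2n+k}+2H_{6n-k}-5H_k .
\]
This matches the bracket in $V_k$, which is the source of that bracket.

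\textbf{Step 2: Summing the series.} We have
\[
F^{(r)}(m)=(-1)^r\sum_k\big((r+1)!\,A_k(m+k)^{-r-2}+r!\,B_k(m+k)^{-r-1}\big).
\]
For $r=1,2$ the exponents are at least $2$, so we may interchange the sums and use
\[
\sum_{m\ge1}(m+k)^{-s}=\zeta(s)-H_k^{(s)} .
\]
This yields
\begin{align*}
\sum_{m}F'(m)&=-\sum_k\big(2A_k(\zeta(3)-H_k^{(3)})+B_k(\zeta(2)-H_k^{(2)})\big),\\
\sum_m F''(m)&=\sum_k\big(6A_k(\zeta(4)-H_k^{(4)})+2B_k(\zeta(3)-H_k^{(3)})\big).
\end{align*}

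\textbf{Step 3: Eliminating the zeta values.} In $\mathcal L_n(P)$ the $\zeta(2)$ terms carry the factor $\sum_kB_k=0$ and drop out. The same cancellation removes the $\zeta(3)$ term that comes from the $B_k$ in the second sum. What remains is
\[
\sum_k\Big(A_k\big(6a\zeta(3)+6b\zeta(4)-6aH_k^{(3)}-6bH_k^{(4)}\big)+B_k\big(-3aH_k^{(2)}-2bH_k^{(3)}\big)\Big).
\]
Applying \eqref{def_abc} turns the first bracket into $6(c-aH_k^{(3)}-bH_k^{(4)})$.

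\textbf{Step 4: Collecting terms.} We substitute $A_k$ and $B_k$ and group by $P'(-k)$ and $P(-k)$. The $P'(-k)$ terms give exactly $U_k$. The $P(-k)$ terms give $(3H_{2n+k}+2H_{6n-k}-5H_k)\cdot U_k+6(c-\dots)$. This agrees with $V_k$ once the signs are reconciled: either $U_k$ carries the opposite sign convention in the logarithmic-derivative term, or the bracket is as displayed in \eqref{def_Vj}. I would recheck this sign with care, since it is the only delicate point; everything else is bookkeeping.

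\textbf{Rationality of $J_n$.} Every quantity in the final expression is rational, so $\mathcal L_n(P)\in\bbQ$ for every $P\in\bbQ[t]_{<6n}$. Applying this to $P=t^{i+j}$ with $i+j\le 6n-2$ shows that every entry of the determinant in \eqref{def_Jn} is rational, hence $J_n\in\bbQ$.
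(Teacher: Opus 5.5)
Your route is the paper's: partial fractions of $R_n(t)P(t)$ over the double poles $t=-k$, the vanishing $\sum_k B_k=0$, termwise summation against $\zeta(s)-H_k^{(s)}$, and elimination of $\zeta(2)$ and then of $a\zeta(3)+b\zeta(4)$ via \eqref{def_abc}. Your value of $G_k(-k)$ is correct, and so are Steps 2--3. However, the proposal does not establish the stated formula for $V_k$. Step 1 contains a sign error, which you notice in Step 4 and leave unresolved. Neither of your suggested ``reconciliations'' is admissible, because $U_k$ and $V_k$ are fixed by the statement, and $U_k$, as the coefficient of $P'(-k)$, is already correct.

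The error is in the logarithmic derivative. We have $(t-2n)_{2n}=\prod_{j=1}^{2n}(t-j)$, and $G_k(t)=(t+k)^2R_n(t)$ has denominator $\prod_{0\le j\le 6n,\ j\ne k}(t+j)^2$. Hence
\[
\frac{G_k'(-k)}{G_k(-k)}=\sum_{j=1}^{2n}\frac{3}{-k-j}-\sum_{\substack{0\le j\le 6n\\ j\ne k}}\frac{2}{j-k}=-3\bigl(H_{2n+k}-H_k\bigr)-2\bigl(H_{6n-k}-H_k\bigr)=5H_k-3H_{2n+k}-2H_{6n-k}.
\]
This is the negative of your expression. Each factor $t-j$ evaluated at $t=-k$ equals $-(k+j)$, and the denominator's logarithmic derivative $-2/(t+j)$ enters with a minus sign. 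With the correct value, the $P(-k)$-coefficient contributed by $B_k$ in Step 4 is
\[
U_k\,\bigl(5H_k-3H_{2n+k}-2H_{6n-k}\bigr)=\bigl(3H_{2n+k}+2H_{6n-k}-5H_k\bigr)\bigl(3aH_k^{(2)}+2bH_k^{(3)}\bigr).
\]
This is exactly the first summand of $V_k$ in \eqref{def_Vj}, so no change of convention is needed. As written, your argument would instead produce $V_k$ with the opposite sign in front of that summand. The conclusion $J_n\in\bbQ$ is unaffected by the sign, but the explicit identity is part of the statement, so Step 1 must be corrected for the proof to go through.
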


\begin{proof}
Since the rational function $R_n(t)P(t)$ has $\deg(R_nP) \le -3$ and all poles lie in $\{0,-1,\ldots,-6n\}$ and have order $\le 2$, there is a unique partial fraction decomposition of the form
\begin{equation}\label{eqn_2.9}
R_n(t)P(t) = \sum_{k=0}^{6n} \left( \frac{A_{1,k}(P)}{t+k} + \frac{A_{2,k}(P)}{(t+k)^2} \right),
\end{equation}
where
\begin{align}
A_{2,k}(P) &= (t+k)^2R_n(t)P(t)\Big|_{t=-k} =  \binom{6n}{k}^2\binom{2n+k}{k}^3P(-k), \label{def_A2j(P)}\\
A_{1,k}(P) &= \frac{\rd}{\rd t}\left( (t+k)^2R_n(t)P(t) \right)\Big|_{t=-k} \notag\\
&= \binom{6n}{k}^2\binom{2n+k}{k}^3 \left( P^\prime(-k) + \left( \sum_{j=1}^{2n} \frac{3}{-k-j} - \sum_{\substack{0 \le j \le 6n \\ j \neq k}} \frac{2}{-k+j} \right) P(-k) \right) \notag \\
&= \binom{6n}{k}^2\binom{2n+k}{k}^3 \Big( P^\prime(-k) + \left( 5H_k -3H_{2n+k} -2H_{6n-k} \right) P(-k) \Big). \label{def_A1j(P)}
\end{align}
By \eqref{eqn_2.9} and $\deg R_nP \le -3$, we have
\begin{equation}\label{eqn_rho1=0}
\sum_{k=0}^{6n} A_{1,k}(P) = \lim_{t \to \infty} t R_n(t)P(t) = 0.
\end{equation}

Substituting \eqref{eqn_2.9} into \eqref{def_Lambda}, and using \eqref{eqn_rho1=0} and \eqref{def_abc}, we obtain
\begin{align*}
\mathcal{L}_n(P) =&\ 3a \sum_{k=0}^{6n} \left( A_{1,k}(P) \sum_{m=1}^{\infty} \frac{1}{(m+k)^2} +2 A_{2,k}(P) \sum_{m=1}^{\infty} \frac{1}{(m+k)^3} \right) \\
&\ + b\sum_{k=0}^{6n} \left( 2A_{1,k}(P) \sum_{m=1}^{\infty} \frac{1}{(m+k)^3} +6 A_{2,k}(P) \sum_{m=1}^{\infty} \frac{1}{(m+k)^4} \right) \\
=&\ 3a \sum_{k=0}^{6n}  \left( A_{1,k}(P) \left( \zeta(2) - H_k^{(2)} \right) +2 A_{2,k}(P) \left(\zeta(3)- H_k^{(3)} \right) \right) \\
&\ + b\sum_{k=0}^{6n} \left( 2A_{1,k}(P) \left( \zeta(3) - H_k^{(3)} \right) +6 A_{2,k}(P) \left( \zeta(4) - H_k^{(4)} \right) \right) \\
=&\ \sum_{k=0}^{6n} \left( \left( -3aH_{k}^{(2)} -2bH_{k}^{(3)} \right) A_{1,k}(P) + 6\left( a\zeta(3)+b\zeta(4) -aH_{k}^{(3)} - bH_{k}^{(4)} \right)A_{2,k}(P) \right) \\
=&\ \sum_{k=0}^{6n} \left( \left( -3aH_{k}^{(2)} -2bH_{k}^{(3)} \right) A_{1,k}(P) + 6\left( c -aH_{k}^{(3)} - bH_{k}^{(4)} \right)A_{2,k}(P) \right).
\end{align*}
Finally, substituting \eqref{def_A2j(P)} and \eqref{def_A1j(P)} into the right-hand side of the above equation, we obtain \eqref{eqn_explicit_expression_for_Lambda_n(P)}.
Therefore, each entry of the determinant in \eqref{def_Jn} is a rational number.
Thus, $J_n \in \bbQ$.
\end{proof}

\section{Nonvanishing of the determinants}\label{sec_3}

In this section, we prove that $J_n \neq 0$ infinitely often by using auxiliary primes $q$. 
This method appeared before in the study of $p$-adic zeta values by Sprang and the author \cite{LS2023+}, and Kilani \cite{K2026+}. 

\begin{proposition}\label{prop_nonva}
Let $q$ be a prime number such that $q \equiv 1 \pmod{9}$ and $q > |b|$. 
Set $m = (q-1)/9$. 
Then $J_{2m} \neq 0$.
In particular, there are infinitely many positive integers $n$ such that $J_n \neq 0$.
\end{proposition}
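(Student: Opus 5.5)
The plan is to show $v_q(J_{2m})$ is finite by computing it exactly: I will isolate the most singular $q$-adic part of $\mathcal{L}_{2m}$ and show that the leading term of the Hankel determinant is a $q$-adic unit times an explicit power of $q$. Put $n=2m$, so $q=9m+1$, $6n=12m=q+3m-1$, $3n=6m<q$ and $2n=4m$. The normalization $\prod_{i<6m} i!^2$ is then a $q$-adic unit. I will work in $\bbZ_{(q)}$ throughout, using the formula of Lemma \ref{lem_explicit_expression_for_Lambda_n(P)}.

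\emph{Step 1: $q$-adic bookkeeping of the weights.} For $0\le k<q$, all of $H_k^{(s)}$, $H_{4m+k}$ and $H_{12m-k}$ are $q$-integral, because $4m+k<q$ is false only when $k\ge q-4m$; I must check this carefully. More precisely, $H_{4m+k}$ acquires a $1/q$ as soon as $4m+k\ge q$, which can only increase the valuation of $V_k$ by one. For $k=q+j$ with $0\le j\le 3m-1$, the following hold:
\begin{itemize}
\item $H_k^{(s)}$ has exact denominator $q^s$.
\item By Lucas, $\binom{12m}{k}\equiv\binom{3m-1}{j}$ and $\binom{4m+k}{k}\equiv\binom{4m+j}{j}$, so both binomials are units.
\item Since $q>|b|$, the pieces $6bH_k^{(4)}$ and $(3H_{4m+k}-5H_k)\cdot 2bH_k^{(3)}$ have exact order $q^{-4}$, while $U_k=-2bH_k^{(3)}+O(q^{-2})$ has exact order $q^{-3}$.
\end{itemize}
Since $-k\equiv -j \pmod q$, I can write
\[
q^4\mathcal{L}_{2m}(P)\equiv \mathcal{E}(P)+q\,\mathcal{D}(P)+\cdots,
\]
where $\mathcal{E}(P)=\sum_{j<3m}\beta_jP(-q-j)$ and $\mathcal{D}(P)=\sum_{j<3m}\gamma_jP'(-q-j)$, with $\beta_j,\gamma_j$ units. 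The terms with $k<q$ contribute at most $q^{-2}$ or $q^{-3}$, and I must verify they are genuinely absorbed in the lower-order tail.

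\emph{Step 2: a two-scale change of basis.} Let $Q(t)=\prod_{j<3m}(t+q+j)\in\bbZ[t]$. I replace the monomial basis of $\bbQ[t]_{<6m}$ by $1,\dots,t^{3m-1},Q,tQ,\dots,t^{3m-1}Q$. This is a unimodular, triangular change of basis, so the determinant is unchanged.
\begin{itemize}
\item The top-left block is $q^{-4}\bigl(\sum_j\beta_j(-q-j)^{a+b}\bigr)+\cdots$, whose reduction is a Vandermonde-type Gram matrix with determinant $\prod\beta_j\cdot\prod_{i<j}(i-j)^2\not\equiv0$.
\item The off-diagonal blocks gain a factor $q$, because $Q$ vanishes at every exact pole $-q-j$. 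Only $q\mathcal{D}$ and the $k<q$ tail survive there.
\item In the bottom-right block, $Q^2$ kills $\mathcal{E}$, and $(Q^2)'$ also vanishes at the poles, so $\mathcal{D}$ dies too. The surviving contributions come from the poles $k<q$, where $Q(-k)\equiv\prod_j(j-k')$ is nonzero.
\end{itemize}
A Schur-complement computation then gives
\[
v_q(J_{2m})=-4\cdot 3m+v_q(\det S),
\]
where $S$ is the bottom block corrected by the off-diagonal terms.

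\emph{Step 3, the main obstacle: showing $\det S\not\equiv0$ at its leading order.} After extracting the correct power of $q$, $S$ should reduce mod $q$ to a Hankel matrix of a functional of the shape
\[
\sum_{k}\omega_k\,Q(-k)^2\,(-k)^{a+b}\quad\text{plus derivative contributions}
\]
over the residue classes $k\bmod q$ not in $\{0,\dots,3m-1\}$. It is also contaminated by $\mathcal{D}$ via the Schur correction $q^2\mathcal{D}\,E^{-1}\mathcal{D}$. I would first try to choose the scale so that exactly one clean piece dominates. The candidate is the classes $k\in[q-4m,q)$, where $H_{4m+k}$ supplies an extra $1/q$ and produces an order-$q^{-3}$ point-evaluation functional on $4m$ further points. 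Combined with the $3m$ derivative directions, this gives at least $3m$ independent evaluations, so that $S\bmod q$ is a confluent-Vandermonde Gram matrix with unit weights. The delicate part is this exact count of points and orders: the required $3m$ independent directions must appear at a single $q$-adic scale, and no cancellation may occur among the $a$-terms. If the count works, $v_q(J_{2m})$ is finite, so $J_{2m}\neq0$. Infinitely many such $q$ exist by Dirichlet's theorem, which gives the final assertion.
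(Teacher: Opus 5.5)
Step~3 has a genuine gap. Your Steps 1--2 are essentially right: in the basis $t^i$, $t^iQ$ the block $E$ is $q^{-4}$ times a unit Vandermonde--Gram matrix, the mixed block $X$ is $O(q^{-3})$, and $v_q(J_{2m})=-12m+v_q(\det S)$. But the whole proposition rests on Step~3, which you leave conditional (``if the count works''). The mechanism you propose there fails because it ignores the weight $c_k=\binom{12m}{k}^2\binom{4m+k}{k}^3$ at the poles $k<q$.

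\begin{itemize}
\item For $5m+1\le k\le 9m$, each binomial has one carry in base $q$. So $v_q(c_k)=2+3=5$, and $c_kV_k$ has valuation $\ge 4$ despite the $1/q$ in $H_{4m+k}$.
\item For $3m\le k\le 5m$, one has $v_q(c_k)=2$.
\item For $k<3m$, one has $Q(-k)=q\prod_{j\ne k}(q+j-k)\equiv 0 \pmod q$, contrary to what you assert.
\end{itemize}

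Hence the bottom-right block satisfies $D\equiv 0\pmod q$ entrywise, and there is no order-$q^{-3}$ evaluation functional on ``$4m$ further points''. Moreover, anything extracted from the poles $k<q$ would involve $U_k=-3aH_k^{(2)}-2bH_k^{(3)}$ with $a$ uncontrolled modulo $q$ (and $U_0=0$). So nondegeneracy could not be certified there.

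The missing observation is that the Schur correction you regard as contamination is in fact the main term. Set $V=\bigl((-j)^i\bigr)_{0\le j,i<3m}$ and $g_j=\gamma_jQ'(-q-j)$; this is a unit since $Q'(-q-j)=\prod_{i\ne j}(i-j)$. Then $q^4E\equiv V^{\top}\operatorname{diag}(\beta_j)V$ and $q^3X\equiv V^{\top}\operatorname{diag}(g_j)V \pmod q$, hence
\[
q^2S=q^2D-(q^3X)^{\top}(q^4E)^{-1}(q^3X)\equiv -V^{\top}\operatorname{diag}\bigl(g_j^2/\beta_j\bigr)V \pmod q,
\]
which is invertible. So $v_q(\det S)=-6m$ and $v_q(J_{2m})=-18m$.

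The poles $k<q$ play no role at all. Values and first derivatives at the $3m$ exact poles $-q-j$ already account for all $6m=\dim\bbQ[t]_{<6m}$ directions, with weights depending only on $b$.

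The paper exploits the same fact more directly. It takes the ($q$-integral) Hermite interpolation basis at the nodes $0,-1,\dots,-(3m-1)$, scaled by $q^2$ for the value-type polynomials and by $q$ for the derivative-type ones. Taylor expansion by $q$ makes this basis $q$-adically close to a Hermite basis at $-q-\ell$. The Gram matrix is then congruent modulo $q$ to a block-diagonal matrix with $2\times2$ blocks $bb_j\begin{pmatrix}-10&8\\8&-6\end{pmatrix}$, each of determinant $-4b^2b_j^2\in\bbZ_q^{\times}$, while all terms with $k<q$ are $O(q)$.
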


\begin{proof}
By \eqref{def_Jn} with $n=2m$, we have
\[
J_{2m} = \frac{1}{\prod_{i=0}^{6m-1} i!^{2}} \cdot \det\Big(\mathcal{B}_{2m}(t^{i},t^{j})\Big)_{0 \le i,j < 6m}.
\]
For $0 \le k \le 3m-1$, define the polynomial and constant
\[
F_k(t) \coloneq \prod_{\substack{0 \le \ell \le 3m-1 \\ \ell \neq k}} \frac{t+\ell}{\ell-k} \in \bbQ[t], \quad a_k \coloneq F_k^{\prime}(-k) = \sum_{\substack{0 \le \ell \le 3m-1 \\ \ell \neq k}} \frac{1}{\ell-k} \in \bbQ.
\]
Put 
\[
P_k(t) =q^2 \cdot (1 - 2a_k \cdot (t+k)) \cdot F_k(t)^2, \quad Q_k(t)=q \cdot (t+k) \cdot F_k(t)^2, \quad 0 \le k \le 3m-1.
\]
These polynomials have degree at most $6m-1$, and
\begin{equation}\label{5.1}
P_k(-\ell) = q^2\delta_{k,\ell}, \quad P_k^{\prime}(-\ell) = 0, \quad Q_k(-\ell) = 0, \quad Q_k^{\prime}(-\ell) = q\delta_{k,\ell}, \quad 0 \le k, \ell \le 3m-1,
\end{equation}
where $\delta_{k,\ell} \in \{0,1\}$ is Kronecker's delta.
Therefore, $P_0,Q_0,P_1,Q_1,\ldots,P_{3m-1},Q_{3m-1}$ form a basis of $\bbQ[t]_{<6m}$.
To prove $J_{2m} \neq 0$, it suffices to show that the matrix of the bilinear form $\mathcal{B}_{2m}$ with respect to this basis has nonzero determinant.

For any $P,Q \in \bbQ[t]_{<6m}$, by \eqref{def_Qn}, \eqref{def_Lambda}, and Lemma \ref{lem_explicit_expression_for_Lambda_n(P)}, we have
\begin{equation}\label{5.2}
\mathcal{B}_{2m}\left( P,Q \right) = \sum_{j=0}^{12m} \mathcal{B}_{2m}^{(j)}(P,Q),
\end{equation}
where 
\begin{align}
\mathcal{B}_{2m}^{(j)}(P,Q) &= \Big( \widehat{U}_j \cdot P^\prime(-j) \cdot Q(-j) +\widehat{U}_j \cdot P(-j) \cdot Q^{\prime}(-j) + \widehat{V}_j \cdot P(-j) \cdot Q(-j) \Big), \label{5.3} \\
\widehat{U}_j &= -(3aH_{j}^{(2)} + 2b H_{j}^{(3)})\binom{12m}{j}^2\binom{4m+j}{j}^3,  \notag\\
\widehat{V}_j &= \Big( \left( 3H_{4m+j} + 2H_{12m-j} - 5H_j \right)(3aH_{j}^{(2)} + 2b H_{j}^{(3)}) +6\left( c - aH_j^{(3)} - bH_j^{(4)} \right) \Big) \notag\\ & \quad \times \binom{12m}{j}^2\binom{4m+j}{j}^3. \notag
\end{align}

For $0 \le j < q = 9m+1$, we have $v_q(\widehat{U}_{j}) \ge 0$, $v_q(\widehat{V}_{j}) \ge -1$, and $P_k, Q_k \in q\bbZ_q[t]$ ($0 \le k \le 3m-1$). 
Thus, by \eqref{5.3}, we have
\begin{equation}\label{5.4}
\mathcal{B}_{2m}^{(j)}(P_{k_1},P_{k_2}), \mathcal{B}_{2m}^{(j)}(Q_{k_1},Q_{k_2}), \mathcal{B}_{2m}^{(j)}(P_{k_1},Q_{k_2}) \in q\mathbb{Z}_q, \quad 0 \le j \le 9m,\ 0 \le k_1,k_2 \le 3m-1.
\end{equation}

For $j = q+ \ell$, where $0 \le \ell \le 3m-1$, we have $H_j^{(s)} = q^{-s}+O(\bbZ_q)$ ($s=1,2,3,4$), $H_{4m+j}= q^{-1}+O(\bbZ_q)$, $H_{12m-j} \in \bbZ_q$, and
\[
b_j \coloneq \binom{12m}{j}^2\binom{4m+j}{j}^3 \in \bbZ_q^{\times}, \quad 9m+1 \le j=q+\ell \le 12m.
\]
Since $q>|b|$ and $b \neq 0$ by \eqref{def_abc}, we have
\[
b \in \bbZ_q^{\times}.
\]
Thus, 
\begin{equation}\label{5.5}
\widehat{U}_j = -2bb_j q^{-3} + O\left( q^{-2}\bbZ_q \right), \quad \widehat{V}_j = -10bb_j q^{-4} + O\left( q^{-3}\bbZ_q \right).
\end{equation}
On the other hand, by Taylor expansion at $-\ell$ and using \eqref{5.1}, we have
\begin{align}
P_k(-j) &= q^2 \delta_{k,\ell} + O(q^4\bbZ_q), \qquad P_k^{\prime}(-j) = O(q^3\bbZ_q), \label{5.6}\\
Q_k(-j) &= -q^2 \delta_{k,\ell} + O(q^3\bbZ_q), \qquad Q_k^{\prime}(-j) =q\delta_{k,\ell} + O(q^2\bbZ_q). \label{5.7}
\end{align}
Substituting \eqref{5.5}--\eqref{5.7} into \eqref{5.3}, we obtain 
\begin{align}
\mathcal{B}_{2m}^{(j)}(P_{k_1},P_{k_2}) &= -10bb_j \delta_{k_1,\ell}\delta_{k_2,\ell} + O(q\bbZ_q), \label{5.8}\\
\mathcal{B}_{2m}^{(j)}(Q_{k_1},Q_{k_2}) &= -6bb_j \delta_{k_1,\ell}\delta_{k_2,\ell} + O(q\bbZ_q), \label{5.9}\\
\mathcal{B}_{2m}^{(j)}(P_{k_1},Q_{k_2}) &= 8bb_j \delta_{k_1,\ell}\delta_{k_2,\ell} + O(q\bbZ_q), \label{5.10}\\
\text{where } j &= q +\ell, \quad 0 \le \ell \le 3m-1, \quad 0 \le k_1,k_2 \le 3m-1. \notag
\end{align}

Now, let $\boldsymbol{M}_{2m}$ be the matrix of the symmetric bilinear form $\mathcal{B}_{2m}$ with respect to the basis $P_0$, $Q_0$, $\ldots$, $P_{3m-1}$, $Q_{3m-1}$ of $\bbQ[t]_{<6m}$.
By \eqref{5.2}, \eqref{5.4}, and \eqref{5.8}--\eqref{5.10}, we have
\[
\boldsymbol{M}_{2m} \in \operatorname{Mat}_{6m}(\bbZ_q), \quad \boldsymbol{M}_{2m} \equiv \operatorname{diag}\left( \left(\begin{array}{cc}
	-10bb_{j} & 8bb_{j} \\
	8bb_{j} & -6bb_{j}
\end{array}\right) \right)_{9m+1 \le j \le 12m} \pmod{\operatorname{Mat}_{6m}(q\bbZ_q)}.
\]
Therefore, 
\[
\det \boldsymbol{M}_{2m} \equiv \prod_{j=9m+1}^{12m}(-4b^2b_j^2) \pmod{q\bbZ_q}, \quad \det \boldsymbol{M}_{2m} \in \bbZ_q^{\times}.
\]
So $\det \boldsymbol{M}_{2m} \neq 0$ and thereby $J_{2m} \neq 0$.

Dirichlet's theorem \cite{Dirichlet1837} gives infinitely many primes $q \equiv 1 \pmod{9}$. 
Therefore, $J_n \neq 0$ infinitely often.
\end{proof}

\section{\texorpdfstring{$p$}{p}-adic estimates for the determinants}\label{sec_4}

As we have shown, under the assumption \eqref{def_abc}, we have $J_n \in \bbQ$. 
In this section, we will give a lower bound for the $p$-adic order of $J_n$ prime-by-prime. 
For any prime $p$, denote by $\log_p$ the logarithm to base $p$. 
As usual, $v_p(x)$ denotes the $p$-adic order for any $x \in \bbQ$, normalized by $v_p(p)=1$ with the convention $v_p(0) = +\infty$. 
We first recall the classical $p$-adic estimates.

\begin{lemma}[classical $p$-adic estimates] \label{lem_cla_p_est}
The following $p$-adic estimates hold.
\begin{enumerate}
\item[\textup{(1)}] For any prime $p$, 
\[
v_p(J_n) \ge -12n\log_p(8n).
\]
\item[\textup{(2)}] For any prime $p > \sqrt{8n}$,
\[
v_p(J_n) \ge -12n.
\]
\item[\textup{(3)}] For any prime $p > 6n$, 
\[
v_p(J_n) \ge 0.
\]
\end{enumerate}
\end{lemma}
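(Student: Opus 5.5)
We will not estimate the Hankel matrix $\big(\mathcal{L}_n(t^{i+j})\big)$ entrywise. Instead we absorb the normalization $\prod_{i<3n} i!^2$ by a change of basis, and only then estimate entrywise. Put
\[
e_i(t) \coloneq \frac{(t)_i}{i!} = \binom{t+i-1}{i}, \qquad 0 \le i < 3n.
\]
The transition matrix from $1,t,\ldots,t^{3n-1}$ to $(t)_0,(t)_1,\ldots,(t)_{3n-1}$ is unitriangular with integer entries (Stirling numbers). Hence $\det\big(\mathcal{B}_n(t^i,t^j)\big) = \det\big(\mathcal{B}_n((t)_i,(t)_j)\big)$. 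Dividing the $i$th row and the $j$th column by $i!$ and $j!$ gives
\[
J_n = \det\big(\mathcal{B}_n(e_i,e_j)\big)_{0\le i,j<3n} = \det\big(\mathcal{L}_n(e_ie_j)\big)_{0\le i,j<3n}.
\]
Each bound then follows from $v_p(J_n) \ge 3n\cdot \min_{i,j} v_p\big(\mathcal{L}_n(e_ie_j)\big)$, using the explicit formula in Lemma \ref{lem_explicit_expression_for_Lambda_n(P)} with $P=e_ie_j$, which has degree $<6n$.

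\emph{Local ingredients.} Let $d_N \coloneq \operatorname{lcm}(1,\ldots,N)$, so that $v_p(d_N)=\lfloor \log_p N\rfloor$.
\begin{itemize}
\item The polynomial $e_ie_j$ is integer-valued, so $P(-k)\in\bbZ$.
\item $P'(-k)$ is obtained by differentiating a product of $i+j$ linear factors, divided by $i!\,j!$. Writing $e_i'(-k)$ as $e_i(-k)$ times a sum of reciprocals (or as a difference quotient of binomials), one gets $d_{3n}P'(-k)\in\bbZ$.
\item $d_{6n}^2U_k$ and $d_{6n}^3U_k$ handle the terms $H_k^{(2)}$ and $H_k^{(3)}$, so $d_{6n}^3U_k\in\bbZ$.
\item $d_{8n}d_{6n}^3V_k\in\bbZ$. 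The factor $d_{8n}$ comes from $H_{2n+k}$, and $d_{6n}^4$ absorbs $H_k^{(4)}$ and the remaining products.
\end{itemize}
Since the binomial weights $\binom{6n}{k}^2\binom{2n+k}{k}^3$ are integers, every entry has $v_p \ge -\big(\lfloor\log_p 8n\rfloor + 3\lfloor\log_p 6n\rfloor\big)\ge -4\log_p(8n)$. The $U_k$ term gives $-3\lfloor\log_p 6n\rfloor-\lfloor\log_p 3n\rfloor$, which is no worse. Multiplying by $3n$ gives (1).

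For (2), when $p>\sqrt{8n}$ every $\lfloor\log_p N\rfloor$ with $N\le 8n$ is at most $1$. So each entry has $v_p\ge -4$, and $v_p(J_n)\ge -12n$.

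For (3), take $p>6n$. Then $H_k^{(s)}$ with $k\le 6n$ is $p$-integral, and $d_{3n}$ is a $p$-adic unit, so $U_k$ and $P'(-k)$ are $p$-integral. The only possible trouble is $H_{2n+k}$ in $V_k$, since $2n+k\le 8n$ may exceed $p$. This is the one point needing care. If $p\le 2n+k$ then $k<p\le 2n+k$, and $p>2n$, so $p$ divides $(2n+k)!$ but not $k!\,(2n)!$. Hence $p^3 \mid \binom{2n+k}{k}^3$. Because $2n+k<2p$, only one term of $H_{2n+k}$ has a $p$ in its denominator, so $v_p(H_{2n+k})\ge -1$, and the product with the binomial weight is $p$-integral. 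Thus all entries are $p$-integral and $v_p(J_n)\ge 0$.

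The main obstacle I expect is verifying the derivative bound $v_p(P'(-k))\ge -\lfloor\log_p 3n\rfloor$ for $P=e_ie_j$ uniformly in $k$, including the ranges where $-k$ is a root of $e_i$. I would handle this via the standard fact that the derivative at integers of an integer-valued polynomial of degree $<N$ lies in $d_N^{-1}\bbZ$, applied to $e_i$ and $e_j$ separately and combined with the product rule.
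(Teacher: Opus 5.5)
Your proposal is correct and follows essentially the same route as the paper. The paper switches to the basis $\binom{t}{i}$, while you use the shifted binomials $\binom{t+i-1}{i}=(t)_i/i!$; both absorb the factor $\prod_{i<3n} i!^2$ while keeping values in $\bbZ$ and derivatives at integers in $\operatorname{lcm}[1,\ldots,3n]^{-1}\bbZ$. From there the paper bounds $v_p(U_k)$ and $v_p(V_k)$ entrywise and handles $p>6n$ through $v_p\bigl(\binom{2n+k}{k}H_{2n+k}\bigr)\ge 0$, exactly as you do.
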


\begin{proof}
Consider the new basis $\binom{t}{0}$, $\binom{t}{1}$, $\binom{t}{2}$, $\ldots$, $\binom{t}{3n-1}$ of the space $\bbQ[t]_{<3n}$.
The transfer matrix from the old basis $1,t,t^2,\ldots,t^{3n-1}$ to the new basis is triangular with diagonal entries $1/i!$ ($i=0,1,\ldots,3n-1$).
Therefore, by \eqref{def_Qn} and \eqref{def_Jn}, 
\begin{equation}\label{eqn_3.1}
J_n = \det\left( \mathcal{B}_n\left( \binom{t}{i},\binom{t}{j} \right) \right)_{0 \le i,j <3n} = \det\left( \mathcal{L}_n\left( \binom{t}{i}\cdot\binom{t}{j} \right) \right)_{0 \le i,j <3n}.
\end{equation}
It is elementary (see Zudilin \cite[Lemma 15]{Zudilin2004}) to show that 
\[
\operatorname{lcm}[1,2,\ldots,i] \cdot \frac{\rd}{\rd t}\binom{t}{i}\Big|_{t=-k} \in \bbZ \quad\text{for any $k \in \bbZ$ and $i \in \bbZ_{>0}$}. 
\]
Therefore,
\begin{equation}\label{eqn_3.2}
v_p\left(  \frac{\rd}{\rd t}\binom{t}{i}\Big|_{t=-k} \right) \ge - \lfloor \log_p(3n) \rfloor \quad\text{for any $k \in \bbZ$ and $i \in \{0,1,\ldots,3n-1\}$}. 
\end{equation}
By \eqref{eqn_explicit_expression_for_Lambda_n(P)}, for any $0 \le i,j < 3n$ we have
\begin{align}
&\ \mathcal{L}_n\left( \binom{t}{i}\cdot\binom{t}{j} \right) \notag \\
=&\ \sum_{k=0}^{6n} \binom{6n}{k}^2\binom{2n+k}{k}^3\left( U_k\binom{-k}{i}\frac{\rd}{\rd t}\binom{t}{j}\Big|_{t=-k} + U_k \binom{-k}{j}\frac{\rd}{\rd t}\binom{t}{i}\Big|_{t=-k} + V_k\binom{-k}{i}\binom{-k}{j} \right), \label{eqn_3.3}
\end{align}
where $U_k$ and $V_k$ are given by \eqref{def_Uj} and \eqref{def_Vj}.

Now, for any prime $p$, since $v_p(H_k^{(s)}) \ge -s\lfloor \log_p k \rfloor$ for any $k,s \in \bbZ_{>0}$, we have
\begin{equation}\label{eqn_3.4}
v_p\left( U_k \right) \ge -3\lfloor \log_p(6n) \rfloor, \quad v_p\left(V_k\right) \ge -4\lfloor \log_p(8n) \rfloor, \quad k \in \{0,1,\ldots,6n\}.
\end{equation}
Combining \eqref{eqn_3.2}--\eqref{eqn_3.4}, we obtain
\begin{equation}\label{eqn_3.5}
v_p\left( \mathcal{L}_n\left( \binom{t}{i}\cdot\binom{t}{j} \right) \right) \ge -4\lfloor \log_p(8n) \rfloor  \quad\text{for any prime $p$ and } i,j \in \{0,1,\ldots,3n-1\}.
\end{equation}
By \eqref{eqn_3.1} and \eqref{eqn_3.5}, we obtain
\[
v_p(J_n) \ge -12n \lfloor \log_p(8n) \rfloor,
\]
which proves both (1) and (2).

Finally, for (3), note that for any $p > 6n$ and $k \in \{0,1,\ldots,6n\}$, we have 
\[
v_p\left(\binom{2n+k}{k} H_{2n+k}\right) \ge 0 \Longrightarrow v_p\left(\binom{2n+k}{k} V_k\right) \ge 0. 
\]
Clearly, $v_p(U_k) \ge 0$ and $v_p\left(  \frac{\rd}{\rd t}\binom{t}{i}\big|_{t=-k} \right) \ge 0$ by \eqref{eqn_3.2}. 
Therefore, \eqref{eqn_3.3} implies 
\[
v_p\left( \mathcal{L}_n\left( \binom{t}{i}\cdot\binom{t}{j} \right) \right) \ge 0 \quad\text{for any prime $p>6n$ and } i,j \in \{0,1,\ldots,3n-1\}.
\]
Thus, \eqref{eqn_3.1} implies $v_p(J_n) \ge 0$ for any prime $p>6n$, proving (3).
\end{proof}

In the rest of this section, we will establish finer $p$-adic estimates for primes $\sqrt{8n}< p \le 6n$. 
This is the key advantage of Fauzan's determinant method.
In spirit, these finer $p$-adic estimates can be viewed as a kind of generalization of the $\Phi_n$-factor method of Zudilin \cite{Zudilin2001,Zudilin2004}.

\begin{definition}
For any prime $p$ and $\rho \in \{0,1,\ldots,p-1\}$, define the linear operator
\begin{align}
\mathcal{L}_n^{\rho \bmod p} \colon \bbQ[t]_{<6n} &\longrightarrow \bbQ \notag \\
P & \mapsto \sum_{\substack{ 0 \le k \le 6n \\ k \equiv \rho \bmod p }} \binom{6n}{k}^2\binom{2n+k}{k}^3\Big( U_k \cdot P^\prime(-k) + V_k \cdot P(-k) \Big), \label{def_Lambda_p}
\end{align}
where $U_k$ and $V_k$ are given by \eqref{def_Uj} and \eqref{def_Vj}.
Define the symmetric bilinear form
\begin{align}
\mathcal{B}_n^{\rho \bmod p} \colon \bbQ[t]_{<3n} \times \bbQ[t]_{<3n} &\longrightarrow \bbQ \notag \\
(P,Q) &\mapsto \mathcal{L}_n^{\rho \bmod p}(P \cdot Q). \label{def_Qn_p}
\end{align}
Define the functions $F^{\rho \bmod p}$ on nonnegative integers:
\begin{align}
F^{\rho \bmod p}(m) \coloneq&\ m^2 +  \left( 2\left\lfloor \frac{6n}{p} \right\rfloor -  2\left\lfloor \frac{6n-\rho}{p} \right\rfloor + 3\left\lfloor \frac{2n+\rho}{p} \right\rfloor - 3\left\lfloor \frac{2n}{p} \right\rfloor - 5 \right) \cdot m 
\notag\\
&\ + 3 \cdot \max\left( 0, m - 2\left\lfloor \frac{6n-\rho}{p} \right\rfloor \right) + \mathbf{1}_{m=1} \cdot \mathbf{1}_{p>\max\{6n-\rho,2n+\rho\}}. \label{def_F1} 
\end{align}
\end{definition}

\begin{lemma}\label{lem_fin_p_est}
For any prime $\sqrt{8n}< p \le 6n$ and $\rho \in \{0,1,\ldots,p-1\}$, for any subsets $I,J \subset \{0,1,\ldots,3n-1\}$ of size $|I|=|J|=m \in \bbZ_{\ge 0}$, we have
\begin{equation}\label{eqn_fin_p_est}
v_p\left( \det\Big( \mathcal{B}_n^{\rho \bmod p}(t^i,t^j) \Big)_{i \in I, j \in J} \right) \ge F^{\rho \bmod p}(m).
\end{equation}
When $m > 2 + 2\lfloor (6n-\rho)/p \rfloor$, the determinant on the left-hand side of \eqref{eqn_fin_p_est} vanishes.
When $m=0$, the empty determinant is understood as $1$.
\end{lemma}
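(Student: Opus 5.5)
The plan is to factor the local Gram matrix through a confluent Vandermonde matrix and apply the Cauchy--Binet formula twice.

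\textbf{Factorization and the vanishing claim.} Let $k_l=\rho+pl$ for $0\le l\le N-1$, where $N=\lfloor (6n-\rho)/p\rfloor+1$. By \eqref{def_Lambda_p}, the bilinear form $\mathcal{B}_n^{\rho \bmod p}$ is a sum of $N$ local forms. Writing $c_k=\binom{6n}{k}^2\binom{2n+k}{k}^3$, the local form at $k$ is
\[
(P,Q)\mapsto c_k\bigl(U_kP'(-k)Q(-k)+U_kP(-k)Q'(-k)+V_kP(-k)Q(-k)\bigr),
\]
and each has rank at most $2$. Concretely, let $X$ be the $3n\times 2N$ matrix whose column pair attached to $k$ is $\bigl((-k)^i\bigr)_i$ and $\bigl(i(-k)^{i-1}\bigr)_i$. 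Let $D$ be block diagonal with blocks $c_k\left(\begin{smallmatrix}V_k&U_k\\U_k&0\end{smallmatrix}\right)$. Then
\[
\bigl(\mathcal{B}_n^{\rho \bmod p}(t^i,t^j)\bigr)_{i,j}=XDX^{T}.
\]
This immediately gives the vanishing statement, since the rank is at most $2N=2+2\lfloor(6n-\rho)/p\rfloor$. For the valuation bound I apply Cauchy--Binet:
\[
\det(XDX^T)_{I,J}=\sum_{S,T}\det X_{I,S}\,\det D_{S,T}\,\det X_{J,T},
\]
where $S,T$ run over $m$-subsets of the $2N$ columns. It then suffices to bound each term from below by $F^{\rho \bmod p}(m)$.

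\textbf{Vandermonde factor (the $m^2$ term).} I expand $(-k_l)^i=\sum_r\binom{i}{r}(-\rho)^{i-r}(-pl)^r$, and similarly for the derivative column. This gives $X=AW$ with $A_{i,r}=\binom{i}{r}(-\rho)^{i-r}\in\bbZ$. Here $W$ has rows indexed by $r$; its value columns have entries $(-pl)^r$, and its derivative columns have entries $r(-pl)^{r-1}$ (up to sign). A second Cauchy--Binet step gives
\[
\det X_{I,S}=\sum_R\det A_{I,R}\det W_{R,S}.
\]
Row $r$ of $W$ is divisible by $p^{r}$, except in derivative columns, where only $p^{r-1}$ is guaranteed. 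Moreover, the minor $\det W_{R,S}$ vanishes unless $R$ is "spread out" relative to the points used. Together these give
\[
v_p(\det W_{R,S})\ge \sum_{r\in R}r-\#\{\text{derivative columns in }S\}\ge \binom{m}{2}-d_S.
\]
So the two outer factors together contribute at least $m(m-1)-d_S-d_T$.

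\textbf{Middle factor (the linear terms) and the main obstacle.} Next I bound $v_p(\det D_{S,T})$. Since $D$ is block diagonal, $\det D_{S,T}$ vanishes unless $S$ and $T$ select compatible columns inside each block. A block used fully contributes $-c_k^2U_k^2$. A block used once contributes either $c_kV_k$, or $c_kU_k$ paired against a derivative column. Because $p>\sqrt{8n}$, Kummer's theorem applied to $c_k$ gives
\[
v_p(c_k)=2\Bigl(\lfloor 6n/p\rfloor-\lfloor k/p\rfloor-\lfloor(6n-k)/p\rfloor\Bigr)+3\Bigl(\lfloor(2n+k)/p\rfloor-\lfloor k/p\rfloor-\lfloor 2n/p\rfloor\Bigr).
\]
For $k\equiv\rho \pmod p$ this equals $2\lfloor 6n/p\rfloor-2\lfloor(6n-\rho)/p\rfloor+3\lfloor(2n+\rho)/p\rfloor-3\lfloor 2n/p\rfloor$. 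This is exactly the linear coefficient in $F^{\rho \bmod p}$ apart from the $-5$.

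For the harmonic factors, $v_p(U_k)\ge -3$ trivially, and it is at least $-2$ or better when few multiples of $p$ lie below $k$. For $V_k$, I use the explicit shape of the combination $\left(3H_{2n+k}+2H_{6n-k}-5H_k\right)(\cdots)+6(\cdots)$, matching it against the powers of $p$ saved by the Vandermonde factor: the $-5$ per selected column, combined with the $-d_S-d_T$ loss, is what should appear.

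I expect this bookkeeping to be the main obstacle. The difficulty is showing that the worst case loses only $5m$, and that an extra $3$ is gained for each column beyond $2\lfloor(6n-\rho)/p\rfloor$. That extra term should come from the top point $k_{N-1}$, or from points whose $U_k,V_k$ have fewer $p$-denominators: once $m$ exceeds twice the number of "bad" points, some selected column must come from a point with improved harmonic valuation. Finally, I treat the case $m=1$ with $p>\max\{6n-\rho,2n+\rho\}$ separately, where $N=1$ and the single entry $\mathcal{L}_n^{\rho\bmod p}(t^{i+j})$ can be checked directly to gain one extra power of $p$.
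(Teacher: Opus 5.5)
Your factorization $XDX^{T}$ is correct and yields the vanishing statement. Your bound $v_p(\det W_{R,S})\ge\sum_{r\in R}r-d_S\ge\binom{m}{2}-d_S$ is also right, and your $A$ is exactly the paper's unimodular $\boldsymbol{E}_\rho$. The gap is the step you defer as ``the main obstacle''. That count is the actual content of the lemma, and your pointers for it are partly wrong.

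Put $L=\lfloor(6n-\rho)/p\rfloor$. The extra $3$ per column beyond $2L$ does not come from the top point $k_{N-1}=\rho+Lp$, which has the most $p$-denominators in $H_k^{(3)}$ and $H_k^{(4)}$. It comes from the bottom point $k_0=\rho<p$. There every $H_\rho^{(s)}\in\bbZ_p$, so $v_p(U_\rho)\ge 0$. Meanwhile $V_\rho$ can only pick up a single $1/p$ from $H_{2n+\rho}$ or $H_{6n-\rho}$, so $v_p(V_\rho)\ge -1$.

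For the other $L$ points, the crude bounds $v_p(U_k)\ge -3$ and $v_p(V_k)\ge -4$ (valid since $k\le 6n<p^2$) are all that is used. No explicit shape of $V_k$ is needed. Your claim that $v_p(U_k)\ge -2$ when few multiples of $p$ lie below $k$ is false: already for $k=\rho+p$ the lone term $p^{-3}$ in $H_k^{(3)}$ gives $v_p(U_k)=-3$ whenever $p\nmid 2b$. Likewise, the $-5$ in $F^{\rho \bmod p}$ is not something to be matched against the shape of $V_k$. It is $-4$ per column plus the $-1$ in $m(m-1)=m^2-m$.

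With these inputs your scheme closes at once. Let $s$ be the common value of $v_p(c_k)$ for $k\equiv\rho \bmod p$. For each block $k$, let $e_k$ be the valuation of its factor in $\det D_{S,T}$, minus the number of derivative columns it contributes to $S$ and to $T$. The cases are:
a fully used block gives $e_k\ge 2s+2v_p(U_k)-2$;
a value/value choice gives $s+v_p(V_k)$;
a mixed choice gives $s+v_p(U_k)-1$;
a derivative/derivative choice gives a zero factor.
Hence $e_k\ge s-4$ per contributed column for $k\ne\rho$, and $e_\rho\ge s-1$ per contributed column. The $L$ blocks with $k\ne\rho$ supply at most $2L$ of the $m$ columns. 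So every surviving Cauchy--Binet term has valuation at least $m(m-1)+(s-4)m+3\max(0,m-2L)$, which is $F^{\rho \bmod p}(m)$ outside the special case.

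In the special case your planned direct check really is needed. Your crude bound $\binom{1}{2}-d_T$ would lose $1$ on a mixed term, although for $m=1$ that minor is an integer. The check works because $v_p(V_\rho)\ge 0$ once $p>\max\{6n-\rho,2n+\rho\}$.

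The paper organizes the same count differently. It splits the matrix in the $(t+\rho)^i$ basis as $\boldsymbol{A}_{0}^{\rho \bmod p}+\boldsymbol{A}_{\ge 1}^{\rho \bmod p}$, with entrywise bounds $i+j+s-1$ and $i+j+s-4$. It then uses $\operatorname{rank}\boldsymbol{A}_{\ge 1}^{\rho \bmod p}\le 2L$ in a multilinear column expansion. Your factorization through $D$ separates the Vandermonde powers of $p$ from the harmonic denominators and does the same accounting block by block.
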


\begin{proof}
Let $\boldsymbol{E}_{\rho}$ be the transfer matrix from the basis $1$, $(t+\rho)$, $(t+\rho)^2$, $\ldots$, $(t+\rho)^{3n-1}$ of $\bbQ[t]_{<3n}$ to the basis $1,t,t^2,\ldots,t^{3n-1}$. 
Then $\boldsymbol{E}_{\rho}$ is a unitriangular matrix with integral entries.
Thus, $\boldsymbol{E}_{\rho} \in \operatorname{GL}_{3n}(\bbZ)$, and
\begin{equation}\label{eqn_3.8}
\Big( \mathcal{B}_n^{\rho \bmod p}(t^i,t^j) \Big)_{0 \le i,j < 3n} = \boldsymbol{E}_{\rho} \cdot \Big( \mathcal{B}_n^{\rho \bmod p}\big( (t+\rho)^i,(t+\rho)^j \big) \Big)_{0 \le i,j < 3n} \cdot \boldsymbol{E}_{\rho}^\top.
\end{equation}
For convenience, write 
\[
c_k =  \binom{6n}{k}^2\binom{2n+k}{k}^3, \quad k \in \{0,1,\ldots,6n\}.
\]
Since $p>\sqrt{8n}$, we have
\[
v_p\left( c_k \right) = 2\left\lfloor \frac{6n}{p} \right\rfloor - 2 \left\lfloor \frac{k}{p} \right\rfloor - 2\left\lfloor \frac{6n-k}{p} \right\rfloor + 3\left\lfloor \frac{2n+k}{p} \right\rfloor - 3\left\lfloor \frac{k}{p} \right\rfloor - 3\left\lfloor \frac{2n}{p} \right\rfloor.
\]
Therefore,
\begin{equation}\label{eqn_3.9}
v_p(c_k) = 2\left\lfloor \frac{6n}{p} \right\rfloor -  2\left\lfloor \frac{6n-\rho}{p} \right\rfloor + 3\left\lfloor \frac{2n+\rho}{p} \right\rfloor - 3\left\lfloor \frac{2n}{p} \right\rfloor \quad\text{if } k \equiv \rho \bmod p.
\end{equation}
Note that $0 \le k \le 6n$ and $k \equiv \rho \bmod p$ is equivalent to $k=\rho+\ell p$ for some $0 \le \ell \le \lfloor (6n-\rho)/p \rfloor$.
Thus, by \eqref{def_Qn_p} and \eqref{def_Lambda_p}, we have
\begin{align*}
\mathcal{B}_n^{\rho \bmod p}\big( (t+\rho)^i,(t+\rho)^j \big) = \sum_{\ell=0}^{\lfloor (6n-\rho)/p \rfloor} c_{\rho+\ell p} \left( U_{\rho+\ell p} \cdot \frac{\rd}{\rd t}(t+\rho)^{i+j}\Big|_{t=-\rho-\ell p} + V_{\rho+\ell p} \cdot (t+\rho)^{i+j}\Big|_{t=-\rho-\ell p} \right).
\end{align*}
Therefore, we have the following identity of matrices
\begin{equation}\label{eqn_3.10}
\Big( \mathcal{B}_n^{\rho \bmod p}\big( (t+\rho)^i,(t+\rho)^j \big) \Big)_{0 \le i,j < 3n} = \boldsymbol{A}_{0}^{\rho \bmod p} + \boldsymbol{A}_{\ge 1}^{\rho \bmod p},
\end{equation}
where
\begin{align}
\boldsymbol{A}_{0}^{\rho \bmod p} =&\ \Big( c_{\rho} \left( U_{\rho} \delta_{i,1}\delta_{j,0} + U_{\rho} \delta_{i,0}\delta_{j,1} + V_{\rho} \delta_{i,0}\delta_{j,0} \right) \Big)_{0 \le i,j < 3n} \in \operatorname{Mat}_{3n}(\bbQ) \label{def_A0}, \\
\boldsymbol{A}_{\ge 1}^{\rho \bmod p} =&\ \sum_{\ell=1}^{\lfloor (6n-\rho)/p \rfloor} \Big( c_{\rho+\ell p} \left( \frac{U_{\rho+\ell p}}{-\ell p} \cdot i + V_{\rho+\ell p} \right) (-\ell p)^{i+j} \Big)_{0 \le i,j < 3n} \notag\\
&\ + \sum_{\ell=1}^{\lfloor (6n-\rho)/p \rfloor} \Big( c_{\rho+\ell p} \cdot \frac{U_{\rho+\ell p}}{-\ell p} \cdot j \cdot (-\ell p)^{i+j} \Big)_{0 \le i,j < 3n} \in \operatorname{Mat}_{3n}(\bbQ). \label{def_A1}
\end{align}
If $\lfloor (6n-\rho)/p \rfloor= 0$, then $\boldsymbol{A}_{\ge 1}^{\rho \bmod p} = \boldsymbol{0}$.
Equation \eqref{def_A1} implies that $\boldsymbol{A}_{\ge 1}^{\rho \bmod p}$ is a sum of $2\lfloor (6n-\rho)/p \rfloor$ matrices of rank $\le 1$, so
\begin{equation}\label{eqn_rank}
\operatorname{rank} \boldsymbol{A}_{\ge 1}^{\rho \bmod p} \le 2 \left\lfloor \frac{6n-\rho}{p} \right\rfloor.
\end{equation}
Clearly, we also have $\operatorname{rank} \boldsymbol{A}_{0}^{\rho \bmod p} \le 2$, and so 
\begin{equation}\label{eqn_rankB}
\operatorname{rank} \Big( \mathcal{B}_n^{\rho \bmod p}\big( (t+\rho)^i,(t+\rho)^j \big) \Big)_{0 \le i,j < 3n} \le 2 + 2 \left\lfloor \frac{6n-\rho}{p} \right\rfloor.
\end{equation}
Equations \eqref{eqn_rankB} and \eqref{eqn_3.8} prove the statement ``when $m > 2 + 2\lfloor (6n-\rho)/p \rfloor$, the determinant on the left-hand side of \eqref{eqn_fin_p_est} vanishes'' in Lemma \ref{lem_fin_p_est}.

By \eqref{def_Uj} and \eqref{def_Vj}, we have $v_p(U_{\rho+\ell p}) \ge -3$ and $v_p(V_{\rho+\ell p}) \ge -4$. 
Combining these $p$-adic estimates with \eqref{eqn_3.9}, we obtain the lower bound for the $p$-adic order of the $(i,j)$th entry of $\boldsymbol{A}_{\ge 1}^{\rho \bmod p}$:
\begin{equation}\label{eqn_3.14}
v_p\left( \left(\boldsymbol{A}_{\ge 1}^{\rho \bmod p}\right)_{i,j} \right) \ge  i+j+2\left\lfloor \frac{6n}{p} \right\rfloor -  2\left\lfloor \frac{6n-\rho}{p} \right\rfloor + 3\left\lfloor \frac{2n+\rho}{p} \right\rfloor - 3\left\lfloor \frac{2n}{p} \right\rfloor - 4.
\end{equation}
Noting that $\rho < p$, by \eqref{def_Uj} and \eqref{def_Vj}, we have $v_p(U_\rho) \ge 0$ and $v_p(V_\rho) \ge -1$.
Therefore, the $p$-adic order of the $(i,j)$th entry of $\boldsymbol{A}_{0}^{\rho \bmod p}$ is better:
\begin{equation}\label{eqn_3.15}
v_p\left( \left(\boldsymbol{A}_{0}^{\rho \bmod p}\right)_{i,j} \right) \ge  i+j+2\left\lfloor \frac{6n}{p} \right\rfloor -  2\left\lfloor \frac{6n-\rho}{p} \right\rfloor + 3\left\lfloor \frac{2n+\rho}{p} \right\rfloor - 3\left\lfloor \frac{2n}{p} \right\rfloor - 1.
\end{equation}
Now, for any subsets $I',J' \subset \{0,1,\ldots,3n-1\}$ of size $m$, expand 
\[
\det \Big( \mathcal{B}_n^{\rho \bmod p}\big( (t+\rho)^i,(t+\rho)^j \big) \Big)_{i \in I', j \in J'} = \det \Big( \boldsymbol{A}_{0}^{\rho \bmod p} + \boldsymbol{A}_{\ge 1}^{\rho \bmod p} \Big)_{I',J'}
\]
multilinearly in its columns to be a sum of several resulting determinants. 
By \eqref{eqn_rank}, if we choose more than $2\lfloor (6n-\rho)/p \rfloor$ columns from $\boldsymbol{A}_{\ge 1}^{\rho \bmod p}$, then the resulting determinant is zero. 
We may only consider the resulting determinants such that at most $2\lfloor (6n-\rho)/p \rfloor$ columns come from $\boldsymbol{A}_{\ge 1}^{\rho \bmod p}$.
Then, by \eqref{eqn_3.14} and \eqref{eqn_3.15}, we obtain
\begin{align*}
&\ v_p\left( \det \Big( \mathcal{B}_n^{\rho \bmod p}\big( (t+\rho)^i,(t+\rho)^j \big) \Big)_{i \in I', j \in J'} \right) \\
\ge&\ \sum_{i \in I'} i + \sum_{j \in J'} j \\
&\ + \left( 2\left\lfloor \frac{6n}{p} \right\rfloor -  2\left\lfloor \frac{6n-\rho}{p} \right\rfloor + 3\left\lfloor \frac{2n+\rho}{p} \right\rfloor - 3\left\lfloor \frac{2n}{p} \right\rfloor - 4 \right) \cdot m \\
&\ + 3 \cdot \max\left\{ 0, m - 2\left\lfloor \frac{6n-\rho}{p} \right\rfloor \right\} \\
\ge&\ F^{\rho \bmod p}(m)
\end{align*}
except for the special case $m=1$ and $p > \max\{6n-\rho,2n+\rho\}$, where the last step used $\sum_{i \in I'} i \ge 0+1+\cdots+(m-1) = m(m-1)/2$, the analogous inequality for $J'$, and \eqref{def_F1}. 

In the special case $m=1$ and $p > \max\{6n-\rho,2n+\rho\}$, we have $\lfloor (6n-\rho)/p \rfloor= 0$ so $\boldsymbol{A}_{\ge 1}^{\rho \bmod p} = \boldsymbol{0}$. 
Since $p > \max\{6n-\rho,2n+\rho\}$, we now have $v_p(U_{\rho}) \ge 0$ and $v_p(V_{\rho}) \ge 0$ (see \eqref{def_Uj} and \eqref{def_Vj}).
Thus, by \eqref{def_A0}, every entry of $\Big( \mathcal{B}_n^{\rho \bmod p}\big( (t+\rho)^i,(t+\rho)^j \big) \Big)_{0 \le i,j < 3n} =\boldsymbol{A}_{0}^{\rho \bmod p}$ is a product of $c_\rho$ and a $p$-adic integer. 
So \eqref{eqn_3.9} implies every entry of $\Big( \mathcal{B}_n^{\rho \bmod p}\big( (t+\rho)^i,(t+\rho)^j \big) \Big)_{0 \le i,j < 3n}$ has $p$-adic order at least 
\[
2\left\lfloor \frac{6n}{p} \right\rfloor -  2\left\lfloor \frac{6n-\rho}{p} \right\rfloor + 3\left\lfloor \frac{2n+\rho}{p} \right\rfloor - 3\left\lfloor \frac{2n}{p} \right\rfloor.
\] 
This is exactly $F^{\rho \bmod p}(1)$ in the special case (see \eqref{def_F1}).
In conclusion, we always have
\begin{equation}\label{eqn_3.19}
v_p\left( \det \Big( \mathcal{B}_n^{\rho \bmod p}\big( (t+\rho)^i,(t+\rho)^j \big) \Big)_{i \in I', j \in J'} \right) \ge F^{\rho \bmod p}(m)
\end{equation}
for any subsets $I',J' \subset \{0,1,\ldots,3n-1\}$ of any size $m$.

Finally, by \eqref{eqn_3.8} and the Cauchy--Binet formula, for any $I,J \subset \{0,1,\ldots,3n-1\}$ of size $m$, we have
\begin{align*}
&\ \det\Big( \mathcal{B}_n^{\rho \bmod p}(t^i,t^j) \Big)_{i \in I, j \in J} \\
=&\ \sum_{\substack{I',J' \subset \{0,1,\ldots,3n-1\} \\ |I'|=|J'|=m}} \det\Big(\boldsymbol{E}_{\rho}\Big)_{I,I'} \det \Big( \mathcal{B}_n^{\rho \bmod p}\big( (t+\rho)^i,(t+\rho)^j \big) \Big)_{i \in I', j \in J'} \det\Big(\boldsymbol{E}_{\rho}^{\top}\Big)_{J',J}.
\end{align*}
By \eqref{eqn_3.19} and $\boldsymbol{E}_{\rho} \in \operatorname{GL}_{3n}(\bbZ)$, we obtain \eqref{eqn_fin_p_est}.
The proof of Lemma \ref{lem_fin_p_est} is complete.
\end{proof}

\begin{lemma}[finer $p$-adic estimates]\label{lem_vpJn}
For any prime $\sqrt{8n}< p \le 6n$, we have
\[
v_p\left( J_n \right) \ge  - \frac{9n^2}{p} + 3n -p \left\{ \frac{3n}{p} \right\}\left( 1 - \left\{ \frac{3n}{p} \right\} \right) + \min_{\substack{ 0 \le m_{\rho} \le 2+2\lfloor (6n-\rho)/p \rfloor \\ \sum_{\rho=0}^{p-1} m_\rho = 3n }} \sum_{\rho=0}^{p-1} F^{\rho \bmod p}(m_\rho),
\]
where $\{x\}=x-\lfloor x \rfloor$ is the fractional part of a real number $x$, and the functions $F^{\rho \bmod p}$ are given by \eqref{def_F1}.
\end{lemma}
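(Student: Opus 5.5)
We start by decomposing the Hankel matrix according to residue classes of the poles modulo $p$. By Lemma \ref{lem_explicit_expression_for_Lambda_n(P)} and \eqref{def_Lambda_p},
\[
\mathcal{L}_n = \sum_{\rho=0}^{p-1} \mathcal{L}_n^{\rho \bmod p}, \qquad
\Big(\mathcal{L}_n(t^{i+j})\Big)_{0\le i,j<3n} = \sum_{\rho=0}^{p-1} \boldsymbol{M}_\rho,
\]
where $\boldsymbol{M}_\rho \coloneq \big(\mathcal{B}_n^{\rho \bmod p}(t^i,t^j)\big)_{0\le i,j<3n}$.

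We expand the determinant of a sum of $p$ matrices multilinearly in the columns. Each column index $j$ is assigned a residue $\rho$, so the column set $\{0,\dots,3n-1\}$ is partitioned into sets $J_\rho$ of sizes $m_\rho$ with $\sum_\rho m_\rho = 3n$. Applying the Laplace expansion to the resulting determinant, the row set is likewise split into sets $I_\rho$ with $|I_\rho| = m_\rho$. This gives
\[
\det\Big(\sum_\rho \boldsymbol{M}_\rho\Big) = \sum_{(J_\rho)} \sum_{(I_\rho)} \pm \prod_{\rho=0}^{p-1} \det(\boldsymbol{M}_\rho)_{I_\rho,J_\rho}.
\]
This is the generalized Laplace, or Cauchy--Binet type, formula for a determinant of a sum.

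Each term is bounded using Lemma \ref{lem_fin_p_est}. Its valuation is at least $\sum_\rho F^{\rho \bmod p}(m_\rho)$. Moreover, the term vanishes unless $m_\rho \le 2+2\lfloor(6n-\rho)/p\rfloor$ for every $\rho$. Since all coefficients in the expansion are $\pm 1$, the ultrametric inequality gives
\[
v_p\Big(\det\big(\mathcal{L}_n(t^{i+j})\big)\Big) \ge \min_{\substack{ 0 \le m_{\rho} \le 2+2\lfloor (6n-\rho)/p\rfloor \\ \sum_\rho m_\rho = 3n}} \sum_{\rho=0}^{p-1} F^{\rho \bmod p}(m_\rho).
\]

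It remains to subtract the normalization $2\sum_{i=0}^{3n-1} v_p(i!)$. With $N = 3n$, we have $\sum_{i<N} v_p(i!) = \sum_{i<N}\sum_{k\ge1}\lfloor i/p^k\rfloor$. Since $p>\sqrt{8n}$, we get $p^2 > 8n > N$, so only $k=1$ contributes. Write $N = qp+r$ with $q = \lfloor N/p\rfloor$ and $r = N - qp = p\{N/p\}$. Then
\[
\sum_{i=0}^{N-1}\lfloor i/p\rfloor = p\,\frac{q(q-1)}{2} + qr .
\]
Twice this equals
\[
pq^2 - pq + 2qr = \frac{(N-r)^2}{p} - (N-r) + 2r\,\frac{N-r}{p} = \frac{N^2 - r^2}{p} - N + r .
\]
Since $r^2/p = p\{N/p\}^2$ and $r = p\{N/p\}$, we get
\[
-\frac{r^2}{p} + r = p\{N/p\}\big(1-\{N/p\}\big).
\]
Hence
\[
2\sum_{i<3n} v_p(i!) = \frac{9n^2}{p} - 3n + p\left\{\frac{3n}{p}\right\}\left(1-\left\{\frac{3n}{p}\right\}\right),
\]
and subtracting this from the bound on the determinant gives exactly the claimed inequality.

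The main point needing care is the multilinear expansion step. We must check that the determinant of a sum of $p$ matrices is the signed sum over all ways of assigning each row and column to a summand, with equal-size blocks, of products of minors. We must also check that blocks of size zero contribute $1$, consistent with $F^{\rho \bmod p}(0) = 0$. We do not need to analyse the minimum here, because it appears unchanged in the statement.
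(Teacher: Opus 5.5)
Your proposal is correct and follows the paper's proof essentially verbatim. You split the Hankel matrix into the residue-class pieces $\mathcal{B}_n^{\rho \bmod p}$, expand the determinant multilinearly in columns and then by Laplace in rows into signed products of minors controlled by Lemma~\ref{lem_fin_p_est}, and subtract $v_p\big(\prod_{i<3n} i!^2\big)=2\sum_{i<3n}\lfloor i/p\rfloor$, which is valid since $p^2>8n>3n$. Your explicit evaluation of that floor sum via $3n=qp+r$ correctly supplies the identity \eqref{eqn_3.22}, which the paper states without derivation.
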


\begin{proof}
By \eqref{def_Lambda}, \eqref{def_Qn}, \eqref{def_Lambda_p}, and \eqref{def_Qn_p}, we have
\[
\Big( \mathcal{B}_n(t^i,t^j) \Big)_{0 \le i,j < 3n} = \sum_{\rho=0}^{p-1} \Big( \mathcal{B}_n^{\rho \bmod p}(t^i,t^j) \Big)_{0 \le i,j < 3n}.
\]
Expand its determinant first by multilinearity in the columns and then by Laplace expansion in the rows.
We deduce that $\det \Big( \mathcal{B}_n(t^i,t^j) \Big)_{0 \le i,j < 3n}$ is a signed sum of products
\begin{equation}\label{eqn_3.20}
\prod_{\rho=0}^{p-1} \det\Big( \mathcal{B}_n^{\rho \bmod p}(t^i,t^j) \Big)_{i \in I_{\rho}, j\in J_{\rho}},
\end{equation}
where $\{ I_{\rho} \}_{0 \le \rho < p}$ and $\{ J_{\rho} \}_{0 \le \rho < p}$ partition the row and column indices, and
\[
|I_{\rho}|=|J_{\rho}|=m_{\rho}, \quad \sum_{\rho=0}^{p-1} m_{\rho} = 3n.
\]
By Lemma \ref{lem_fin_p_est}, each factor of \eqref{eqn_3.20} has $p$-adic order at least $F^{\rho \bmod p}(m_\rho)$, and it vanishes if $m_{\rho} > 2+2\lfloor (6n-\rho)/p \rfloor$. 
Therefore,
\begin{equation}\label{eqn_3.21}
v_p\left( \det \Big( \mathcal{B}_n(t^i,t^j) \Big)_{0 \le i,j < 3n} \right) \ge \min_{\substack{ 0 \le m_{\rho} \le 2+2\lfloor (6n-\rho)/p \rfloor \\ \sum_{\rho=0}^{p-1} m_\rho = 3n }} \sum_{\rho=0}^{p-1} F^{\rho \bmod p}(m_\rho).
\end{equation}

Since $p>\sqrt{8n}$, we have
\begin{equation}\label{eqn_3.22}
v_p\left( \prod_{i=0}^{3n-1} i!^2 \right) = 2\sum_{i=0}^{3n-1} \left\lfloor \frac{i}{p} \right\rfloor = \frac{9n^2}{p} - 3n +p \left\{ \frac{3n}{p} \right\}\left( 1 - \left\{ \frac{3n}{p} \right\} \right).
\end{equation}
Now, \eqref{def_Jn}, \eqref{eqn_3.21}, and \eqref{eqn_3.22} prove the lemma.
\end{proof}

\section{Summation over primes}\label{sec_5}

In this section, we sum the lower bounds for $v_p(J_n)$ over primes $p$. 
The lower bounds in Lemma \ref{lem_vpJn} are discrete in nature.
Therefore, we have to consider several ranges of $p$ and carry out a case-by-case analysis. 
Indeed, we shall consider three main ranges.
\[
p \le n; \qquad n < p \le 3n; \qquad 3n < p \le 6n.
\]
For different ranges, we will take different (but related) strategies to address them.
The most important range is $3n<p \le 6n$.
For this case, we need to exploit the full strength of Lemma \ref{lem_vpJn}. 
For the other two ranges, we will sacrifice some of the estimates from Lemma \ref{lem_vpJn} to simplify the summation $\sum v_p(J_n)\log p$.

We need the following lemma, which is a corollary of the prime number theorem.

\begin{lemma}\label{lem_4.1}
Let $v>u>0$ be absolute constants. 
For any bounded piecewise continuous function $f$ on $[u,v]$, we have
\begin{equation}\label{pnt}
\lim_{n \to \infty} \frac{1}{n} \sum_{\substack{un < p \le vn \\ p \text{ prime }}} f\left(\frac{p}{n}\right) \log p = \int_{u}^{v} f(x)\,\rd x.
\end{equation}
\end{lemma}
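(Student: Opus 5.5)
We deduce the statement from the prime number theorem in the form $\theta(x) \coloneq \sum_{p \le x} \log p = x + o(x)$, by the standard passage from step functions to Riemann-integrable functions. First we treat indicator functions of subintervals. For $u \le \alpha < \beta \le v$, the sum over primes $\alpha n < p \le \beta n$ of $\log p$ equals $\theta(\beta n) - \theta(\alpha n) = (\beta-\alpha)n + o(n)$. Dividing by $n$ and letting $n \to \infty$ gives $\int_u^v \mathbf{1}_{(\alpha,\beta]}(x)\,\rd x$. The same holds for closed or open subintervals, since endpoints change the sum by at most $O(\log n) = o(n)$. By linearity, \eqref{pnt} then holds for every step function on $[u,v]$.

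Next we pass to general $f$. A bounded piecewise continuous function on the compact interval $[u,v]$ is Riemann integrable. So for each $\varepsilon>0$ there are step functions $g \le f \le h$ on $[u,v]$ with $\int_u^v (h-g)\,\rd x < \varepsilon$; one can take lower and upper Darboux step functions for a fine partition that includes the finitely many discontinuities. Since $\log p > 0$, the weighted sums are monotone in the function, so
\[
\frac{1}{n}\sum_{un<p\le vn} g\!\left(\tfrac{p}{n}\right)\log p \le \frac{1}{n}\sum_{un<p\le vn} f\!\left(\tfrac{p}{n}\right)\log p \le \frac{1}{n}\sum_{un<p\le vn} h\!\left(\tfrac{p}{n}\right)\log p .
\]
Taking $\liminf$ and $\limsup$ and applying the step-function case to $g$ and $h$ traps both between $\int g$ and $\int h$. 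These two integrals are within $\varepsilon$ of each other and of $\int_u^v f\,\rd x$. Letting $\varepsilon \to 0$ yields \eqref{pnt}.

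The only step with real content is the prime number theorem itself, $\theta(x) \sim x$, which we quote. Everything else is routine Riemann-sum bookkeeping. The one point to watch is that the step functions must be defined on $[u,v]$ with the intervals in the partition matched to the range $un<p\le vn$; boundary primes contribute only $O(\log n)$ each, so this is harmless.
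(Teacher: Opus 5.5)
Your proposal is correct and follows the same route as the paper: apply the prime number theorem $\sum_{p\le x}\log p\sim x$ to step functions, then pass to general $f$ by a standard approximation. The paper only names these two steps; your Darboux sandwich argument (using $\log p>0$ for monotonicity) and the $O(\log n)$ bookkeeping at partition endpoints supply the routine details correctly.
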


\begin{proof}
By the prime number theorem \cite{Hadamard1896,ValleePoussin1896}, we have $\sum_{p \le x} \log p \sim x$ as $x \to +\infty$. 
Equation \eqref{pnt} follows in the special case that $f$ is a step function.
The general case then follows by a standard approximation argument. 
\end{proof}

Now, we deal with the first range.

\begin{lemma}\label{lem_4.2}
For any prime $\sqrt{8n} < p \le n$, we have
\[
v_p(J_n) \ge \left( 9\left\{ \frac{2n}{p} \right\} - 6\left\{ \frac{6n}{p} \right\} - 6 \right) n - \frac{29}{16}p - 3 \sqrt{n}.
\]
Consequently,
\[
\liminf_{n \to \infty} \frac{1}{n^2}\sum_{p \le n} v_p(J_n) \log p \ge -5.69.
\]
\end{lemma}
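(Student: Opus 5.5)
Throughout, $p$ ranges over primes with $\sqrt{8n}<p\le n$. I will take the bound of Lemma \ref{lem_vpJn} and weaken $F^{\rho \bmod p}$ to a quadratic that can be minimized explicitly. Put $N_\rho=\lfloor (6n-\rho)/p\rfloor$ and
\[
d_\rho = 2\left\lfloor \frac{6n}{p} \right\rfloor - 2N_\rho + 3\left\lfloor \frac{2n+\rho}{p} \right\rfloor - 3\left\lfloor \frac{2n}{p} \right\rfloor - 5 .
\]
Dropping the nonnegative terms $3\max(0,m-2N_\rho)$ and the indicator in \eqref{def_F1} gives $F^{\rho\bmod p}(m)\ge m^2+d_\rho m$. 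I also drop the upper constraints on $m_\rho$ and allow real $m_\rho$; this only lowers the minimum, so the resulting bound is still valid.

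Since $\rho<p$, each bracket difference equals $0$ or $1$. Explicitly, $\lfloor 6n/p\rfloor-N_\rho=\mathbf 1_{\rho>r_6}$ and $\lfloor (2n+\rho)/p\rfloor-\lfloor 2n/p\rfloor=\mathbf 1_{\rho\ge p-r_2}$, where $r_6$ and $r_2$ are the residues of $6n$ and $2n$ modulo $p$. Hence $d_\rho\in[-5,0]$ and
\[
D\coloneq\sum_\rho d_\rho = 2(p-1-r_6)+3r_2-5p .
\]
Completing squares and applying Cauchy--Schwarz with $\sum m_\rho=3n$ gives
\[
\sum_\rho (m_\rho^2+d_\rho m_\rho) \ge \frac{(3n+D/2)^2}{p}-\sum_\rho\frac{d_\rho^2}{4} = \frac{9n^2}{p}+\frac{3nD}{p}-\frac p4\operatorname{Var}(d),
\]
where $\operatorname{Var}(d)$ is the variance of the $p$ values $d_\rho$. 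These values lie in an interval of length $5$, so $\operatorname{Var}(d)\le 25/4$.

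Substituting into Lemma \ref{lem_vpJn}, the $9n^2/p$ terms cancel. Write $r_6=p\{6n/p\}$ and $r_2=p\{2n/p\}$. Then $3n+3nD/p$ equals $n\big(9\{2n/p\}-6\{6n/p\}-6\big)$ up to the term $-6n/p$, and $6n/p\le 3\sqrt n$ because $p>\sqrt{8n}$. The remaining losses are $-p\{3n/p\}(1-\{3n/p\})\ge -p/4$ and $-\tfrac{25}{16}p$ from the variance, which together give $-\tfrac{29}{16}p$. This proves the first inequality.

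For the consequence, I split the primes $p\le n$ into two ranges.

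\textbf{Small primes.} For $p\le\sqrt{8n}$, Lemma \ref{lem_cla_p_est}(1) gives
\[
\sum_{p\le\sqrt{8n}}12n\log_p(8n)\log p\le 12n\,\pi(\sqrt{8n})\log(8n)=O(n^{3/2}),
\]
which is negligible after dividing by $n^2$.

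\textbf{Primes $\sqrt{8n}<p\le n$.} Here I use the new bound.
\begin{enumerate}
\item The term $-\tfrac{29}{16}p$ contributes $-\tfrac{29}{16}\sum_{p\le n}p\log p\sim-\tfrac{29}{32}n^2$.
\item The term $-3\sqrt n$ contributes $O(n^{3/2})$.
\item The main term is $\frac1n\sum_p f(p/n)\log p$ with $f(x)=9\{2/x\}-6\{6/x\}-6$. This $f$ is bounded ($|f|\le 12$) but not piecewise continuous at $0$. So I fix $\varepsilon>0$, apply Lemma \ref{lem_4.1} on $[\varepsilon,1]$, and bound the contribution of $p\le\varepsilon n$ trivially by $O(\varepsilon)$. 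Letting $\varepsilon\to0$ leaves $\int_0^1 f(x)\,\rd x$.
\end{enumerate}

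This integral can be computed from $\int_0^1\{a/x\}\,\rd x=a\int_a^\infty\{y\}y^{-2}\,\rd y$. For integer $a$ this equals $a\big(1-\gamma-\log a+H_{a-1}\big)$, where $\gamma$ is Euler's constant, which I will check carefully. Evaluating with $a=2$ and $a=6$ and adding $-\tfrac{29}{32}$ should give a total $\ge-5.69$.

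The main obstacle is bookkeeping rather than ideas. Specifically, I must check three points:
\begin{itemize}
\item that discarding the $m_\rho$ constraints and the $3\max$ term keeps the constants within $29/16$ and $3\sqrt n$ (the variance bound $25/16$ is the delicate piece);
\item that the exact integer expression for $D$ matches the fractional-part formula up to $O(n/p)$;
\item that the final numerical integral indeed lands above $-5.69$.
\end{itemize}
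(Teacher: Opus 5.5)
Your argument follows the paper's proof essentially step for step, and everything through the first inequality and the $\varepsilon$-splitting with Lemma \ref{lem_4.1} is correct. That includes the bound $F^{\rho\bmod p}(m)\ge m^2+d_\rho m$, completing the square with Cauchy--Schwarz, the variance bound (the paper's $\sum s_{\rho\bmod p}^2\le 5\sum s_{\rho\bmod p}$ step in different clothing), and the $-6n/p\le 3\sqrt n$ loss. Treating $-\frac{29}{16}p$ separately via $\sum_{p\le n}p\log p\sim n^2/2$, and using $|f|\le 12$ on $\sqrt{8n}<p\le\varepsilon n$, are harmless variants of the paper's choices.

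The gap is in the final constant, the only genuinely quantitative step. Your formula $\int_0^1\{a/x\}\,\mathrm{d}x=a(1-\gamma-\log a+H_{a-1})$ is wrong. Since $\int_k^{k+1}(y-k)y^{-2}\,\mathrm{d}y=\log\frac{k+1}{k}-\frac{1}{k+1}$, one gets $\int_1^a\{y\}y^{-2}\,\mathrm{d}y=\log a-H_a+1$, and hence
\[
\int_0^1\left\{\frac{a}{x}\right\}\mathrm{d}x=a\left(H_a-\gamma-\log a\right).
\]
Your expression exceeds this by $a-1$; for $a=2$ it is even larger than $1$, which is impossible for an integrand in $[0,1)$. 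With your formula the total comes out near $-26.69$, so the claimed bound $\ge -5.69$ would fail. With the correct formula,
\[
18\left(\tfrac32-\gamma-\log 2\right)-36\left(\tfrac{49}{20}-\gamma-\log 6\right)-6-\frac{29}{32}=18\gamma+18\log 18-\frac{10897}{160}=-5.68968\ldots,
\]
which clears $-5.69$ by only about $3\cdot 10^{-4}$. So ``should give a total $\ge -5.69$'' cannot be left as a deferred check. With this little margin the integral must be evaluated exactly, and with the corrected formula the argument goes through.
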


\begin{proof}
For any prime $p$ in the interval $\sqrt{8n} < p \le n$ and $\rho \in \{0,1,\ldots,p-1\}$, let
\[
s_{\rho \bmod p} \coloneq 2\left\lfloor \frac{6n}{p} \right\rfloor -  2\left\lfloor \frac{6n-\rho}{p} \right\rfloor + 3\left\lfloor \frac{2n+\rho}{p} \right\rfloor - 3\left\lfloor \frac{2n}{p} \right\rfloor, \quad \overline{s}_p \coloneq \frac{1}{p} \sum_{\rho=0}^{p-1} s_{\rho \bmod p}.
\]
Noting that
\[
s_{\rho \bmod p} = - 2 \left\lfloor \left\{ \frac{6n}{p} \right\} - \frac{\rho}{p} \right\rfloor + 3 \left\lfloor \left\{ \frac{2n}{p} \right\} + \frac{\rho}{p} \right\rfloor = 2 \cdot \mathbf{1}_{\rho/p > \{6n/p\}} + 3 \cdot \mathbf{1}_{\rho/p \ge 1 -\{2n/p\}},
\]
we can calculate
\begin{equation}\label{4.2}
\overline{s}_p = 2 - 2\left\{ \frac{6n}{p} \right\} - \frac{2}{p} + 3\left\{ \frac{2n}{p} \right\}.
\end{equation}

By \eqref{def_F1}, we have $F^{\rho \bmod p}(m) \ge m^2 + (s_{\rho \bmod p} - 5)m$ for any nonnegative integer $m$.
Therefore, for any nonnegative integers $m_{\rho}$ with $\sum_{\rho=0}^{p-1} m_{\rho} = 3n$, we have
\begin{align*}
\sum_{\rho=0}^{p-1} F^{\rho \bmod p}(m_\rho) &\ge \sum_{\rho=0}^{p-1} \left( m_\rho^2 + (s_{\rho \bmod p}-5)m_\rho \right) \\ 
&= \sum_{\rho=0}^{p-1} \left( m_{\rho} + \frac{s_{\rho \bmod p} - \overline{s}_p}{2} \right)^2 + 3n(\overline{s}_p - 5) + \frac{p}{4} \overline{s}_p^2 - \frac{1}{4}\sum_{\rho=0}^{p-1} s_{\rho \bmod p}^2 \\
&\ge \frac{9n^2}{p} + 3n(\overline{s}_p - 5) + \frac{p}{4} \overline{s}_p^2 - \frac{1}{4}\sum_{\rho=0}^{p-1} 5s_{\rho \bmod p} \\
&= \frac{9n^2}{p} + 3n(\overline{s}_p - 5) - \frac{p}{4}  \overline{s}_p(5-\overline{s}_p) \\
&\ge \frac{9n^2}{p} + 3n(\overline{s}_p - 5) - \frac{25}{16}p,
\end{align*}
where the third-to-last step used Cauchy's inequality and $0 \le s_{\rho \bmod p} \le 5$.
Thus, Lemma \ref{lem_vpJn} implies
\begin{align}
v_p(J_n) &\ge 3n(\overline{s}_p - 4) -p \left\{ \frac{3n}{p} \right\}\left( 1 - \left\{ \frac{3n}{p} \right\} \right) - \frac{25}{16}p \notag\\
&\ge 3n(\overline{s}_p - 4) - \frac{29}{16} p, \label{eqn_4.1}
\end{align}
where the last step used $\theta(1-\theta) \le 1/4$ for $\theta = \{3n/p\}$. 
Substituting \eqref{4.2} into \eqref{eqn_4.1} and using $p>\sqrt{8n}$, we obtain
\begin{equation}\label{4.3}
v_p(J_n) \ge \left( 9\left\{ \frac{2n}{p} \right\} - 6\left\{ \frac{6n}{p} \right\} - 6 \right) n - \frac{29}{16}p - 3 \sqrt{n}.
\end{equation}

Consider the function
\[
f_1(x) \coloneq 9\left\{ \frac{2}{x} \right\} - 6 \left\{ \frac{6}{x} \right\} - 6 - \frac{29}{16}x, \quad x \in (0,1].
\]
For any $\varepsilon  \in (0,1)$, the function $f_1(x)$ is piecewise continuous on $[\varepsilon,1]$. 
By \eqref{4.3} and Lemma \ref{lem_4.1}, we have
\begin{equation}\label{4.4}
\frac{1}{n^2} \sum_{\varepsilon n < p \le n} v_p(J_n) \log p \ge  \int_{\varepsilon}^{1} f_1(x) \,\rd x  - o(1) \quad\text{as } n \to \infty.
\end{equation}
By Lemma \ref{lem_cla_p_est} (1)(2) and the prime number theorem, we have
\begin{align}
\sum_{p \le \sqrt{8n}} v_p(J_n) \log p &\ge -12 \sqrt{8} n^{3/2} \log (8n), \label{4.5} \\
\sum_{\sqrt{8n} < p \le \varepsilon n} v_p(J_n) \log p &\ge -12n \sum_{p \le \varepsilon n} \log p \ge -\left(12\varepsilon +o(1)\right) n^2 \quad\text{as } n \to \infty. \label{4.6}
\end{align}
Putting \eqref{4.4}--\eqref{4.6} together, and letting $\varepsilon \to 0^{+}$, we obtain
\begin{equation}\label{4.7}
\liminf_{n \to \infty} \frac{1}{n^2}\sum_{p \le n} v_p(J_n) \log p \ge \int_0^{1} f_1(x) \,\rd x.
\end{equation}

For any $m \in \bbZ_{>0}$, we have
\[
\int_{0}^{1} \left\{  \frac{m}{x} \right\} \rd x = m \int_{m}^{+\infty} \frac{\{y\}}{y^2} \rd y = m\cdot\left( H_m - \gamma -\log m \right),
\]
where $\gamma = 0.577\ldots$ is the Euler--Mascheroni constant and $H_m=\sum_{k=1}^{m} k^{-1}$. 
Therefore, 
\begin{align*}
\int_0^{1} f_1(x) \,\rd x = 18 \gamma + 18\log 18 - \frac{10897}{160} = -5.689676\ldots,
\end{align*} 
and \eqref{4.7} gives
\[
\liminf_{n \to \infty} \frac{1}{n^2}\sum_{p \le n} v_p(J_n) \log p \ge -5.69.
\]
\end{proof}

Next, we tackle the second range.

\begin{lemma}\label{lem_4.3}
We have
\[
\liminf_{n \to \infty} \frac{1}{n^2}\sum_{n < p \le 3n} v_p(J_n) \log p \ge -9.39.
\]
\end{lemma}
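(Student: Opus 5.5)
The plan is to feed Lemma \ref{lem_vpJn} a lower bound for the discrete minimization that is uniform in $p/n$, and then integrate over $x=p/n\in(1,3]$ with Lemma \ref{lem_4.1}. For $n<p\le 3n$ we automatically have $p>\sqrt{8n}$, so Lemma \ref{lem_vpJn} applies. Moreover $\lfloor 6n/p\rfloor\in\{2,3,4,5\}$, $\lfloor 2n/p\rfloor\in\{0,1\}$, and each of $\lfloor (6n-\rho)/p\rfloor$ and $\lfloor (2n+\rho)/p\rfloor$ takes only a couple of values as $\rho$ runs over $\{0,\ldots,p-1\}$. Hence $F^{\rho \bmod p}$ belongs to one of boundedly many explicit functions of $m$. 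Which one is determined by where $\rho/p$ falls relative to the thresholds $\{6n/p\}$ and $1-\{2n/p\}$, exactly as in the computation of $s_{\rho \bmod p}$ in Lemma \ref{lem_4.2}. Unlike in Lemma \ref{lem_4.2}, I would keep the term $3\max(0,m-2\lfloor(6n-\rho)/p\rfloor)$ and the cap $m_\rho\le 2+2\lfloor(6n-\rho)/p\rfloor$. Here $\lfloor(6n-\rho)/p\rfloor$ is small, so the penalty really bites: it is the whole gain over the crude bound.

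To handle the constrained minimum I would use Lagrangian duality. For any real $\lambda$,
\[
\min_{\sum m_\rho=3n}\sum_\rho F^{\rho \bmod p}(m_\rho)\ \ge\ 3n\lambda+\sum_{\rho=0}^{p-1}\ \min_{0\le m\le 2+2\lfloor(6n-\rho)/p\rfloor}\bigl(F^{\rho \bmod p}(m)-\lambda m\bigr).
\]
The inner minimum depends only on the class of $\rho$, and is a finite minimum over $m\le 12$. The number of $\rho$ in each class equals $p$ times the length of an explicit interval in $\{6n/p\}$, $\{2n/p\}$, up to $O(1)$. The indicator term $\mathbf{1}_{m=1}\mathbf{1}_{p>\max\{6n-\rho,2n+\rho\}}$ never applies here, since $6n-\rho\ge 6n-p\ge p$ would be needed to fail; it can therefore be dropped, as $F$ only gets larger.

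Putting this into Lemma \ref{lem_vpJn} gives $v_p(J_n)\ge n\,G_\lambda(p/n)-O(1)$, where
\[
G_\lambda(x)=-\frac{9}{x}+3-x\Bigl\{\frac{3}{x}\Bigr\}\Bigl(1-\Bigl\{\frac{3}{x}\Bigr\}\Bigr)+3\lambda+x\,h_\lambda(x),
\]
and $h_\lambda(x)$ is the class-weighted average of the inner minima. I would choose $\lambda=\lambda(x)$ piecewise constant on the subintervals of $(1,3]$ cut out by the breakpoints of $6/x$, $2/x$ and $3/x$, namely $x=6/5,\,3/2,\,2,\,6/5$ etc. On each piece I would pick $\lambda$ near the marginal cost that makes the average $m_\rho$ equal $3n/p=3/x$. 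Then $G(x)$ is bounded and piecewise continuous on $[1,3]$, so Lemma \ref{lem_4.1} yields
\[
\liminf_{n\to\infty}\frac{1}{n^2}\sum_{n<p\le 3n}v_p(J_n)\log p\ \ge\ \int_1^3 G(x)\,\rd x,
\]
and the claim reduces to showing that this integral is at least $-9.39$.

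The main obstacle is the bookkeeping in this last step. One must enumerate, on each subinterval of $x$, the classes of $\rho$ with their proportions, their values of $s_{\rho \bmod p}$ and $\lfloor(6n-\rho)/p\rfloor$, and the resulting minima of $F-\lambda m$. One must also choose $\lambda$ well enough that the dual bound loses little, and evaluate the integral, which involves terms like $\int\{c/x\}\,\rd x$, in closed form or numerically. If the Lagrangian bound turns out too lossy at some pieces, I would instead compute the exact constrained minimum there: the $F$'s are convex in $m$, so a greedy water-filling allocation is optimal, and the exact minimum can be written as a piecewise quadratic in $3/x$.
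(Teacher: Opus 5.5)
Your framework is the paper's own. You bound $F^{\rho \bmod p}(m)$ below by $\lambda m$ minus a per-class constant, with $\lambda$ piecewise constant in $x=p/n$. You then sum over residue classes, add the contribution of $\prod_{i=0}^{3n-1} i!^2$ from Lemma \ref{lem_vpJn}, and integrate with Lemma \ref{lem_4.1}.

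The gap is that you stop where the lemma's content begins. You never fix $\lambda$ on any piece, never compute a per-class minimum or class count, and never evaluate $\int_1^3 G(x)\,\rd x$. So the bound $-9.39$ is only asserted to be reachable. This is not a formality: the final contradiction has a margin of only about $0.78$ ($31.61$ versus $30.83$). For comparison, the bound \eqref{4.3} of Lemma \ref{lem_4.2}, whose derivation is also valid on this range, integrates over $(1,3]$ to only about $-13.8$.

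Your diagnosis of where the gain comes from is also off. With the choices below, the penalty $3\max(0,m-2\lfloor(6n-\rho)/p\rfloor)$ and the cap change none of the per-class minima. The reason is that in every class some integer minimizer of $m^2+(s_{\rho \bmod p}-5-\lambda)m$ is at most $2\lfloor(6n-\rho)/p\rfloor$. The improvement over Lemma \ref{lem_4.2} comes instead from treating three things exactly: the per-class minima over nonnegative integers, the class counts, and the factorial contribution. Lemma \ref{lem_4.2} handles these through the relaxations $s^2\le 5s$, $\bar s_p(5-\bar s_p)\le 25/4$ and $\theta(1-\theta)\le 1/4$.

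To close the gap, take integer $\lambda=4,2,0,1$ on $(1,\tfrac{6}{5}]$, $(\tfrac{6}{5},\tfrac{3}{2}]$, $(\tfrac{3}{2},2]$, $(2,3]$ respectively. Use only the following three ingredients:
\begin{itemize}
\item the bound $F^{\rho \bmod p}(m)\ge\lambda m-\lfloor(\lambda+5-s_{\rho \bmod p})^2/4\rfloor$ for $m\in\bbZ_{\ge0}$;
\item the exact counts $N_0(p),N_2(p),N_3(p),N_5(p)$ of the values of $s_{\rho \bmod p}$;
\item the factorial contribution, which equals $6p-12n$ for $n<p\le\tfrac{3}{2}n$ and $2p-6n$ for $\tfrac{3}{2}n<p\le 3n$.
\end{itemize}
Besides your breakpoints, the counts change formula at $x=\tfrac{8}{7},\tfrac{4}{3},\tfrac{8}{5},\tfrac{8}{3}$. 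At these points $\{6/x\}+\{2/x\}$ crosses $1$, and the empty class switches between $s=2$ and $s=3$.

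On each of the eight resulting intervals the bound is linear in $n$ and $p$; for instance, $v_p(J_n)\ge 5p-8n-5$ for $n<p\le\tfrac{8}{7}n$. The integrals add up to $-657/70\approx-9.3857>-9.39$. Your per-class minima are at least these, so the same values of $\lambda$ complete your version. Real $\lambda$ and water-filling are unnecessary.
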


\begin{proof}
Consider a prime $n < p \le 3n$.
We assume $n \ge 10$ so that $p>n>\sqrt{8n}$.
The starting point is the same as that of Lemma \ref{lem_4.2}.
For any $\rho \in \{0,1,\ldots,p-1\}$, by \eqref{def_F1}, we have $F^{\rho \bmod p}(m) \ge m^2 + (s_{\rho \bmod p} - 5)m$ for any $m \in \bbZ_{\ge 0}$, where
\begin{equation}\label{eqn_srho}
s_{\rho \bmod p} =2 \cdot \mathbf{1}_{\rho/p > \{6n/p\}} + 3 \cdot \mathbf{1}_{\rho/p \ge 1 -\{2n/p\}}  \in \{0,2,3,5\}.	
\end{equation}
Roughly speaking, $F^{\rho \bmod p}(m)$ grows at least quadratically in $m$. 
Now, the idea is to find a lower bound for $F^{\rho \bmod p}(m)$ with linear growth in $m$. 
Let $\lambda$ be an integer parameter to be determined. 
For any $m \in \bbZ_{\ge 0}$, we have
\begin{align*}
F^{\rho \bmod p}(m) &\ge \lambda m + \left( m^2 +(s_{\rho \bmod p} - \lambda - 5)m  \right) \\
&\ge \lambda m - \left\lfloor \frac{(\lambda + 5 - s_{\rho \bmod p})^2}{4} \right\rfloor.
\end{align*}
Define
\[
N_s(p) \coloneq \# \left\{  \rho \in \{0,1,\ldots,p-1\} \mid  s_{\rho \bmod p} = s  \right\}, \quad s=0,2,3,5.
\]
Then,
\begin{equation}\label{4.8}
\min_{\substack{ 0 \le m_{\rho} \le 2+2\lfloor (6n-\rho)/p \rfloor \\ \sum_{\rho=0}^{p-1} m_\rho = 3n }} \sum_{\rho=0}^{p-1} F^{\rho \bmod p}(m_\rho) \ge 3\lambda n  - \sum_{s \in \{0,2,3,5\}} N_s(p) \left\lfloor \frac{(\lambda + 5 - s)^2}{4} \right\rfloor.
\end{equation}
Since $n<p \le 3n$, we have
\begin{equation}\label{4.00}
- \frac{9n^2}{p} + 3n -p \left\{ \frac{3n}{p} \right\}\left( 1 - \left\{ \frac{3n}{p} \right\} \right) = \begin{cases}
6p -12n \quad&\text{if } n < p \le \frac{3}{2}n \\
2p -6n \quad&\text{if } \frac{3}{2}n < p \le 3n
\end{cases}.
\end{equation}
Adding \eqref{4.8} and \eqref{4.00}, and using Lemma \ref{lem_vpJn}, we obtain
\begin{equation}\label{4.11}
v_p(J_n) \ge (6p-12n) \cdot \mathbf{1}_{n<p\le 3n/2} + (2p-6n)\cdot \mathbf{1}_{3n/2 < p \le 3n} + 3\lambda n  - \sum_{s \in \{0,2,3,5\}} N_s(p) \left\lfloor \frac{(\lambda + 5 - s)^2}{4} \right\rfloor.
\end{equation}
Now, using \eqref{eqn_srho}, we have
\begin{align*}
N_0(p) &= \min\left( 1+p\left\{\frac{6n}{p}\right\}, p-p\left\{\frac{2n}{p}\right\} \right), \\
N_2(p) &= \max\left( p-1-p\left\{\frac{6n}{p}\right\}-p\left\{\frac{2n}{p}\right\}, 0 \right), \\
N_3(p) &= \max\left( p\left\{\frac{6n}{p}\right\}+p\left\{\frac{2n}{p}\right\} + 1 -p, 0 \right), \\
N_5(p) &= \min\left( p-1-p\left\{\frac{6n}{p}\right\}, p\left\{\frac{2n}{p}\right\} \right).
\end{align*}
Below we consider several ranges of $p$. 
In each range, we can express $N_0(p),N_2(p),N_3(p),N_5(p)$ explicitly in terms of $n$ and $p$ without using fractional parts.
After taking a suitable integer $\lambda$, the right-hand side of \eqref{4.11} then gives a lower bound that we need:
\begin{table}[htbp]
\centering
\renewcommand{\arraystretch}{1.8}
\resizebox{\textwidth}{!}
{\begin{tabular}{@{}ccccccc@{}}
\toprule
			Range of $p$
			& $N_0(p)$
			& $N_2(p)$
			& $N_3(p)$
			& $N_5(p)$
			& $\lambda$
			& RHS of \eqref{4.11}
			\\
			\midrule
			$n<p\le \dfrac{8n}{7}$
			& $2p-2n$
			& $0$
			& $8n-7p+1$
			& $6p-6n-1$
			& $4$
			& $-8n+5p-5$
			\\
			$\dfrac{8n}{7}<p\le \dfrac{6n}{5}$
			& $6n-5p+1$
			& $7p-8n-1$
			& $0$
			& $2n-p$
			& $4$
			& $-32n+26p-8$
			\\
			$\dfrac{6n}{5}<p\le \dfrac{4n}{3}$
			& $2p-2n$
			& $0$
			& $8n-6p+1$
			& $5p-6n-1$
			& $2$
			& $-8n+p-3$
			\\
			$\dfrac{4n}{3}<p\le \dfrac{3n}{2}$
			& $6n-4p+1$
			& $6p-8n-1$
			& $0$
			& $2n-p$
			& $2$
			& $-32n+19p-6$
			\\
			$\dfrac{3n}{2}<p\le \dfrac{8n}{5}$
			& $2p-2n$
			& $0$
			& $8n-5p+1$
			& $4p-6n-1$
			& $0$
			& $-2n-5p-1$
			\\
			$\dfrac{8n}{5}<p\le 2n$
			& $6n-3p+1$
			& $5p-8n-1$
			& $0$
			& $2n-p$
			& $0$
			& $-26n+10p-4$
			\\
			$2n<p\le \dfrac{8n}{3}$
			& $p-2n$
			& $0$
			& $8n-3p+1$
			& $3p-6n-1$
			& $1$
			& $-n-p-2$
			\\
			$\dfrac{8n}{3}<p\le 3n$
			& $6n-2p+1$
			& $3p-8n-1$
			& $0$
			& $2n$
			& $1$
			& $-25n+8p-5$
			\\
			\bottomrule
		\end{tabular}%
	}
\caption{Explicit values of the right-hand side of \eqref{4.11} in each range of $p$}
\label{Table1}
\end{table}

Finally, by \eqref{4.11}, the last column of Table \ref{Table1}, and Lemma \ref{lem_4.1}, we obtain
\begin{align*}
&\ \liminf_{n \to \infty} \frac{1}{n^2}\sum_{n < p \le 3n} v_p(J_n) \log p \\
\ge&\ \int_{1}^{8/7} (-8+5x) \,\rd x + \int_{8/7}^{6/5} (-32+26x) \,\rd x + \cdots + \int_{8/3}^{3} (-25+8x)\,\rd x \\
=&\ -\frac{657}{70} > -9.39.
\end{align*}
\end{proof}

At last, we address the most important range $3n<p \le 6n$.

\begin{lemma}\label{lem_4.4}
We have
\[
\liminf_{n \to \infty} \frac{1}{n^2}\sum_{3n < p \le 6n} v_p(J_n) \log p \ge -15.75.
\]
\end{lemma}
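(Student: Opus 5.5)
Since $3n<p\le 6n$, the classical normalization term in Lemma \ref{lem_vpJn} vanishes. Indeed $3n/p<1$, so $\{3n/p\}=3n/p$, and
\[
-\frac{9n^2}{p}+3n-p\cdot\frac{3n}{p}\Big(1-\frac{3n}{p}\Big)=0 .
\]
Hence $v_p(J_n)\ge \min\sum_{\rho}F^{\rho \bmod p}(m_\rho)$, with $0\le m_\rho\le 2+2\lfloor(6n-\rho)/p\rfloor$ and $\sum_\rho m_\rho=3n$. The plan is to evaluate this minimum exactly, not merely to bound it crudely as in Lemmas \ref{lem_4.2} and \ref{lem_4.3}. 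This is the range where the full strength of Lemma \ref{lem_vpJn} is needed.

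\textbf{Classifying the residues.} In this range $\lfloor 6n/p\rfloor=1$ and $\lfloor 2n/p\rfloor=0$. Also $\lfloor(6n-\rho)/p\rfloor=\mathbf 1_{\rho\le 6n-p}$ and $\lfloor(2n+\rho)/p\rfloor=\mathbf 1_{\rho\ge p-2n}$. So every $\rho$ falls into one of four types.

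\begin{itemize}
\item[(a)] $\rho\le 6n-p$ and $\rho<p-2n$: the cap is $m_\rho\le 4$ and $F(m)=m^2-5m+3\max(0,m-2)$. The values for $m=0,\dots,4$ are $0,-4,-6,-3,2$.
\item[(b)] $\rho\le 6n-p$ and $\rho\ge p-2n$: the cap is $4$ and $F(m)=m^2-2m+3\max(0,m-2)$. The values are $0,-1,0,6,14$.
\item[(c)] $\rho>6n-p$ and $\rho<p-2n$: the cap is $2$ and $F(m)=m^2+\mathbf 1_{m=1}$, since the indicator in \eqref{def_F1} is active here. The values are $0,2,4$.
\item[(d)] $\rho>6n-p$ and $\rho\ge p-2n$: the cap is $2$ and $F(m)=m^2+3m$. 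The values are $0,4,10$.
\end{itemize}

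The number of residues of each type is an explicit linear expression in $n$ and $p$, up to $O(1)$. The formulas change according to whether $p-2n\le 6n-p$, that is $p\le 4n$, or $p>4n$. For $p>4n$, type (b) is empty and type (d) has about $2n$ elements. For $p\le 4n$, type (a) has about $p-2n$ elements, type (b) about $8n-2p$, type (c) none, and type (d) about $p-6n+\ldots$. I will recompute these counts carefully for each subrange.

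\textbf{Solving the allocation problem.} Each type gives a sequence of marginal costs $F(m)-F(m-1)$:
\begin{itemize}
\item type (a): $-4,-2,3,5$;
\item type (b): $-1,1,6,8$;
\item type (c): $2,2$ (not convex, but the pair can be treated as two unit steps of cost $2$ each);
\item type (d): $4,6$.
\end{itemize}
All of these sequences are nondecreasing, so the minimum is attained greedily. One fills units in increasing order of marginal cost until $3n$ units are placed, and for a type with nondecreasing marginal costs the greedy allocation is optimal.

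Concretely, the procedure runs as follows. First take all $-4$ and $-2$ steps of type (a), then the $-1$ steps of type (b), then cost-$1$ steps, then cost-$2$ steps, and so on. At each stage I compare the cumulative capacity with $3n$; the cumulative capacities are linear in $n$ and $p$. This splits $3n<p\le 6n$ into finitely many subintervals with breakpoints at rational multiples of $n$. On each subinterval the minimum equals $g(p/n)\,n+O(1)$ with $g$ piecewise linear. I will tabulate these pieces as in Table \ref{Table1}.

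\textbf{Summation and the main obstacle.} Lemma \ref{lem_4.1} then gives
\[
\liminf_{n\to\infty}\frac{1}{n^2}\sum_{3n<p\le6n}v_p(J_n)\log p\ \ge\ \int_3^6 g(x)\,\rd x .
\]
It remains to evaluate this integral as an explicit rational number and check that it exceeds $-15.75$. The main obstacle is bookkeeping: getting the exact type counts and greedy breakpoints right, including the boundary residues $\rho=6n-p$ and $\rho=p-2n$, and keeping the indicator term in type (c). The claimed constant appears to have little slack, so losing either could spoil the bound. As a sanity check, I will verify the endpoints against Lemma \ref{lem_4.3} near $p=3n$ and against the trivial bound $v_p(J_n)\ge 0$ expected near $p=6n$.
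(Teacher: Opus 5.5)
Your plan is correct. It is the primal form of the paper's argument and, carried out, reproduces the paper's bounds exactly.

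\textbf{Comparison with the paper.} The paper never computes the minimum in Lemma \ref{lem_vpJn} exactly. It splits the range at $\tfrac72 n$, $4n$, $\tfrac92 n$, the same breakpoints your greedy procedure produces. On each piece it checks a linear minorant
\[
F^{\rho \bmod p}(m)\ge \lambda m - c\,\mathbf{1}_{\rho\text{ of type (a)}},
\qquad (\lambda,c)=(-1,4),\ (-2,2),\ (-2,2),\ (2,10).
\]
This immediately gives $v_p(J_n)\ge 3\lambda n - c\cdot\#\{\text{type (a)}\}$, with no optimality argument needed. These $\lambda$ are exactly the marginal costs at which your greedy fill stops.

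With the correct residue counts, the greedy stops at costs $-1,-2,-2,2$ on the four pieces. The counts are:
\begin{itemize}
\item for $p\le 4n$: $p-2n$, $8n-2p+1$, $0$, $2p-6n-1$ residues of types (a)--(d);
\item for $p>4n$: $6n-p+1$, $0$, $2p-8n-1$, $2n$.
\end{itemize}
Note that type (d) has $2p-6n-1$ elements when $p\le 4n$, not ``$p-6n+\cdots$''. The resulting minima are exactly $5n-4p$, $-2n-2p$, $-18n+2p-2$ and $-54n+10p-10$, the paper's four bounds.

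So your route shows these bounds are the true minima, and hence that $-15.75$ cannot be improved using Lemma \ref{lem_vpJn} alone in this range. The price is that you must justify greedy optimality for separable convex allocation; this is standard, and the paper's dual certificates avoid it.

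\textbf{Bookkeeping warnings.} The integral is exactly
\[
\int_3^6 g(x)\,\rd x=-4-\tfrac{19}{4}-\tfrac{19}{4}-\tfrac94=-\tfrac{63}{4}.
\]
There is zero slack, so you will get equality rather than ``exceeds''; equality is enough. Also, your proposed sanity checks would mislead you:
\begin{itemize}
\item $g(3^+)=-7$, whereas the last row of Table \ref{Table1} gives $-1$ at $x=3$;
\item $g(6)=6$, not $0$.
\end{itemize}
Both jumps are genuine. At $p=3n$ and $p=6n$, $\lfloor 6n/p\rfloor$ changes, so the $p$-adic orders of $\binom{6n}{k}$ change abruptly. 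Do not expect continuity with the neighbouring ranges.
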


\begin{proof}
First, fix a prime $p$ in the range $3n < p \le 6n$. 
We assume $n \ge 10$ so that $p>3n>\sqrt{8n}$.
Since $p > 3n$, we have
\[
- \frac{9n^2}{p} + 3n -p \left\{ \frac{3n}{p} \right\}\left( 1 - \left\{ \frac{3n}{p} \right\} \right) =0.
\]
Thus, Lemma \ref{lem_vpJn} reduces to
\begin{equation}\label{4.12}
v_p(J_n) \ge \min_{\substack{ 0\le m_\rho\le 2+2\lfloor (6n-\rho)/p \rfloor \\ \sum_{\rho=0}^{p-1}m_\rho=3n}} \sum_{\rho=0}^{p-1}F^{\rho \bmod p}(m_\rho).
\end{equation}
We can determine the full list of $F^{\rho \bmod p}(m)$ ($0 \le m \le 2+2\lfloor (6n-\rho)/p \rfloor$) as follows.

Since $3n < p \le 6n$, we have
\[
\left\lfloor \frac{6n}{p} \right\rfloor = 1, \quad \left\lfloor \frac{6n-\rho}{p} \right\rfloor = \mathbf{1}_{\rho \le 6n - p}, \quad \left\lfloor \frac{2n+\rho}{p} \right\rfloor= \mathbf{1}_{\rho \ge p-2n},\quad \left\lfloor \frac{2n}{p} \right\rfloor = 0.
\]
Therefore, \eqref{def_F1} reduces to
\begin{align}
F^{\rho \bmod p}(m) =&\ m^2 + \left( -2 \cdot \mathbf{1}_{\rho \le 6n - p} + 3 \cdot \mathbf{1}_{\rho \ge p-2n} - 3 \right) m \notag\\
&+ 3 \cdot\max\left( 0, m - 2\cdot \mathbf{1}_{\rho \le 6n - p} \right) + \mathbf{1}_{m=1} \cdot \mathbf{1}_{\rho > 6n - p} \cdot \mathbf{1}_{\rho < p-2n}. \label{4.13}
\end{align}

\textbf{Case 1}: $3n < p \le 7n/2$. 
By \eqref{4.13}, we have
\begin{equation}\label{4.14}
F^{\rho \bmod p}(m) = \begin{cases}
m^2 -5m +3\cdot\max(0,m-2) \quad(0 \le m \le 4) \quad&\text{if } 0 \le \rho < p -2n \\
m^2 -2m + 3\cdot\max(0,m-2) \quad(0 \le m \le 4)  \quad&\text{if } p-2n \le \rho \le 6n -p \\
m^2 + 3m \quad(0 \le m \le 2)  \quad&\text{if } 6n - p < \rho < p
\end{cases}.
\end{equation}
A finite check shows
\[
F^{\rho \bmod p}(m) \ge -m -4 \cdot \mathbf{1}_{\rho < p-2n} \quad (0 \le m \le 2+2\lfloor (6n-\rho)/p\rfloor,\ 0 \le \rho \le p-1).
\]
Therefore, \eqref{4.12} implies
\[
v_p(J_n) \ge -3n -4(p-2n) = 5n - 4p.
\]
Then, Lemma \ref{lem_4.1} gives
\begin{equation}\label{case1}
\liminf_{n \to \infty} \frac{1}{n^2}\sum_{3n < p \le 7n/2} v_p(J_n) \log p \ge \int_3^{7/2} (5-4x)\,\rd x = -4.
\end{equation}

\textbf{Case 2}: $7n/2 < p \le 4n$.
The finite list \eqref{4.14} still holds in this case.
One can check  
\[
F^{\rho \bmod p}(m) \ge -2m -2 \cdot \mathbf{1}_{\rho < p-2n} \quad (0 \le m \le 2+2\lfloor (6n-\rho)/p \rfloor,\ 0 \le \rho \le p-1).
\]
Therefore, \eqref{4.12} implies
\[
v_p(J_n) \ge -6n -2(p-2n) = -2n - 2p.
\]
Then, Lemma \ref{lem_4.1} gives
\begin{equation}\label{case2}
\liminf_{n \to \infty} \frac{1}{n^2}\sum_{7n/2 < p \le 4n} v_p(J_n) \log p \ge \int_{7/2}^{4} (-2-2x)\,\rd x = -\frac{19}{4}.
\end{equation}

\textbf{Case 3}: $4n < p \le 9n/2$.
By \eqref{4.13}, we have 
\begin{equation}\label{4.17}
F^{\rho \bmod p}(m) = \begin{cases}
m^2 -5m +3\cdot\max(0,m-2) \quad(0 \le m \le 4) \quad&\text{if } 0 \le \rho \le 6n-p \\
m^2 + \mathbf{1}_{m=1} \quad(0 \le m \le 2)  \quad&\text{if } 6n-p < \rho < p-2n \\
m^2 + 3m \quad(0 \le m \le 2)  \quad&\text{if } p-2n \le \rho < p
\end{cases}.
\end{equation}
A finite check shows
\[
F^{\rho \bmod p}(m) \ge -2m -2 \cdot \mathbf{1}_{\rho \le 6n-p} \quad (0 \le m \le 2+2\lfloor (6n-\rho)/p \rfloor,\ 0 \le \rho \le p-1).
\]
Therefore, \eqref{4.12} implies
\[
v_p(J_n) \ge -6n -2(6n-p+1) = -18n +2p -2.
\]
Then, Lemma \ref{lem_4.1} gives
\begin{equation}\label{case3}
\liminf_{n \to \infty} \frac{1}{n^2}\sum_{4n < p \le 9n/2} v_p(J_n) \log p \ge \int_{4}^{9/2} (-18+2x)\,\rd x = -\frac{19}{4}.
\end{equation}

\textbf{Case 4}: $9n/2 < p \le 6n$. The finite list \eqref{4.17} still holds in this case. 
One can check
\[
F^{\rho \bmod p}(m) \ge 2m -10 \cdot \mathbf{1}_{\rho \le 6n -p} \quad (0 \le m \le 2+2\lfloor (6n-\rho)/p\rfloor,\ 0 \le \rho \le p-1).
\]
Therefore, \eqref{4.12} implies
\[
v_p(J_n) \ge 6n -10(6n-p+1) = -54n + 10p - 10.
\]
Then, Lemma \ref{lem_4.1} gives
\begin{equation}\label{case4}
\liminf_{n \to \infty} \frac{1}{n^2}\sum_{9n/2 < p \le 6n} v_p(J_n) \log p \ge \int_{9/2}^{6} (-54 +10x)\,\rd x = -\frac{9}{4}.
\end{equation}

Finally, summing up \eqref{case1}, \eqref{case2}, \eqref{case3}, and \eqref{case4}, we obtain
\[
\liminf_{n \to \infty} \frac{1}{n^2}\sum_{3n < p \le 6n} v_p(J_n) \log p \ge -\frac{63}{4} = -15.75.
\]
\end{proof}

Putting Lemmas \ref{lem_4.2}--\ref{lem_4.4} and Lemma \ref{lem_cla_p_est} (3) together, we obtain the following.

\begin{proposition}\label{prop_p_est}
We have 
\[
\liminf_{n \to \infty} \frac{1}{n^2}\sum_{p} v_p(J_n)\log p \ge -30.83.
\]
In particular, all sufficiently large $n$ with $J_n \neq 0$ satisfy $|J_n| \ge \exp\left( -30.84 \cdot n^2 \right)$.
\end{proposition}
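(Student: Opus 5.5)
The proposition follows by splitting the sum over primes into the four ranges already treated. We partition
\[
\sum_{p} v_p(J_n)\log p = \sum_{p \le n} + \sum_{n<p\le 3n} + \sum_{3n<p\le 6n} + \sum_{p>6n}.
\]
By Lemma \ref{lem_cla_p_est} (3), every term with $p>6n$ is nonnegative, so the last sum is $\ge 0$. The first three sums are bounded below, after dividing by $n^2$, by Lemma \ref{lem_4.2}, Lemma \ref{lem_4.3}, and Lemma \ref{lem_4.4}. Note that Lemma \ref{lem_4.2} already absorbs the primes $p\le\sqrt{8n}$ through \eqref{4.5}. The liminf of a finite sum is at least the sum of the liminfs, so
\[
\liminf_{n\to\infty}\frac{1}{n^2}\sum_p v_p(J_n)\log p \ge -5.69-9.39-15.75 = -30.83.
\]

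For the second assertion, take $n$ with $J_n\neq 0$. By Lemma \ref{lem_explicit_expression_for_Lambda_n(P)}, $J_n\in\bbQ^\times$. Write $J_n=\pm\prod_p p^{v_p(J_n)}$ (a finite product), so that
\[
\log|J_n| = \sum_p v_p(J_n)\log p.
\]
By the liminf statement, for every fixed $\varepsilon>0$ this sum is at least $-(30.83+\varepsilon)n^2$ for all sufficiently large $n$. This holds for all large $n$ regardless of whether $J_n$ vanishes, and we apply it only to those $n$ with $J_n\neq 0$. Taking $\varepsilon=0.01$ gives $|J_n|\ge \exp(-30.84\, n^2)$.

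There is essentially no obstacle here, since all the analytic input was done in the earlier lemmas. The only point to state carefully is that $\log|J_n|$ equals the full sum $\sum_p v_p(J_n)\log p$, including the small primes and the primes above $6n$. This is exactly why all four ranges must be covered, with nonnegativity supplying the range $p>6n$.
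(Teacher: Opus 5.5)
Your proof is correct and is essentially the paper's argument: it combines Lemmas \ref{lem_4.2}--\ref{lem_4.4} for the ranges $p\le n$, $n<p\le 3n$, $3n<p\le 6n$ with Lemma \ref{lem_cla_p_est}~(3) for $p>6n$. The second assertion then follows, as you say, from $\log|J_n|=\sum_p v_p(J_n)\log p$ for $J_n\in\bbQ^{\times}$, taking $\varepsilon=0.01$.
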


\section{Real estimates for the determinants}\label{sec_6}

In this section, we estimate $|J_n|$ as a real number.
Recall that the linear operator $\mathcal{L}_n$ is defined by \eqref{def_Lambda}.
We first express it as a contour integral. 
This is a classical argument. 
Such integrals have been studied extensively by Nesterenko \cite{Nesterenko1996}, Rivoal \cite{Rivoal2002}, and Zudilin \cite{Zudilin2001,Zudilin2002a,Zudilin2002b,Zudilin2004}.

\begin{lemma}\label{lem_6.1}
For any $P \in \mathbb{C}[t]_{<6n}$, 
\[
\mathcal{L}_n(P) = \frac{1}{2\pi\bi}\int_{2n+\frac{1}{2}-\bi\infty}^{2n+\frac{1}{2}+\bi\infty} R_n(z)P(z) \left( \frac{\pi}{\sin \pi z} \right)^2 \Big( 3a - 2b \pi \cot(\pi z) \Big) \rd z.
\]
\end{lemma}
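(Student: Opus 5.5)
The plan is to shift the line of integration to the right across the poles at the positive integers and to read $\mathcal{L}_n(P)$ off as a sum of residues. Put
\[
G(z)\coloneq R_n(z)P(z)\left(\frac{\pi}{\sin\pi z}\right)^2\bigl(3a-2b\pi\cot(\pi z)\bigr).
\]

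First, locate the poles. $R_n(z)$ has the factor $(z-2n)_{2n}^3$, which has triple zeros at $z=1,\dots,2n$. Near an integer, $(\pi/\sin\pi z)^2\cot(\pi z)$ has at worst a pole of order $3$. So $G$ is holomorphic at $z=1,\dots,2n$, and this is why the line $\operatorname{Re} z=2n+\tfrac12$ is allowed (any line with real part in $(0,2n+1)$ would do). To the right of the line, the only poles of $G$ are at the integers $m\ge 2n+1$, where $R_nP$ is holomorphic.

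Second, compute the residue at such an $m$. Write $z=m+w$. Then $(\pi/\sin\pi z)^2=w^{-2}+\pi^2/3+O(w^2)$ and $\pi\cot\pi z=w^{-1}-\pi^2 w/3+O(w^3)$, so
\[
\left(\frac{\pi}{\sin\pi z}\right)^2\pi\cot\pi z=w^{-3}+O(w^{-1})\cdot 0+\dots
\]
The $w^{-1}$ coefficient vanishes here, since the expansion is odd with leading terms $w^{-3}+0\cdot w^{-1}$. Checking this: $(w^{-2}+\pi^2/3)(w^{-1}-\pi^2w/3)=w^{-3}+(\pi^2/3-\pi^2/3)w^{-1}+\dots$, so the $w^{-1}$ term is indeed $0$. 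Therefore
\[
\operatorname{Res}_{z=m}G=3a\,(R_nP)'(m)-2b\cdot\tfrac12 (R_nP)''(m)=3a(R_nP)'(m)-b(R_nP)''(m).
\]
There is also the constant $\pi^2/3$ in $(\pi/\sin)^2$, which would multiply $R_nP(m)$ if it appeared at order $w^{-1}$. It does not: the $3a$ part has only a $w^{-2}$ term and no $w^{-1}$ term.

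Third, close the contour to the right, which traverses it clockwise relative to the right half-plane. This gives
\[
\frac{1}{2\pi\bi}\int G=-\sum_{m\ge 2n+1}\operatorname{Res}_{z=m}G=\sum_{m\ge 2n+1}\bigl(-3a(R_nP)'(m)+b(R_nP)''(m)\bigr).
\]
The terms with $1\le m\le 2n$ vanish, because $R_nP$ has a zero of order $\ge 3$ there, so its first and second derivatives vanish. Hence the sum extends to all $m\ge1$ and equals $\mathcal{L}_n(P)$ by \eqref{def_Lambda}.

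The main obstacle is justifying the contour shift. I would use rectangles with vertical sides $\operatorname{Re} z=2n+\tfrac12$ and $\operatorname{Re} z=N+\tfrac12$, and horizontal sides $\operatorname{Im} z=\pm T$. On the horizontal sides, $(\pi/\sin\pi z)^2$ decays like $e^{-2\pi|\operatorname{Im} z|}$ while $\cot$ stays bounded, so these contributions vanish as $T\to\infty$. On the line $\operatorname{Re} z=N+\tfrac12$, both $1/\sin^2$ and $\cot$ are bounded uniformly in $N$, with exponential decay in $\operatorname{Im} z$, and $\deg(R_nP)\le-3$ gives the bound $|R_nP(z)|\ll|z|^{-3}$. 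So the integral over that line is $O(N^{-2})\to0$. The same bounds show that the original integral converges absolutely and that the residue series converges.
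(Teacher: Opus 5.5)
Your proposal is correct and follows essentially the same route as the paper: move the line $\operatorname{Re} z=2n+\tfrac12$ to the right using rectangular contours, compute $\operatorname{Res}_{z=m}=3a(R_nP)'(m)-b(R_nP)''(m)$ from the vanishing $w^{-1}$ coefficient of $(\pi/\sin\pi z)^2\,\pi\cot\pi z$, and extend the sum to all $m\ge 1$ using the triple zeros of $R_n$ at $1,\dots,2n$. The only cosmetic difference is that you send the height $T$ and the right edge $N+\tfrac12$ to infinity separately, whereas the paper uses a single parameter $T$ for both.
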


\begin{proof}
This is a standard application of Cauchy's residue theorem. 
For any sufficiently large integer $T>10n$, consider the clockwise rectangular contour $C_{T}$, whose four vertices are $2n+1/2 \pm \bi T$ and $T+1/2 \pm \bi T$.

On the two horizontal edges of $C_T$, since $\deg R_nP \le -3$, we have (as $T \to +\infty$)
\begin{align*}
\left| R_n(z)P(z) \right| &= O(T^{-3}), \\ 
\left|  \left( \frac{\pi}{\sin \pi z} \right)^2 \Big( 3a - 2b \pi \cot(\pi z) \Big)  \right| &\le 10^3(|a|+|b|) e^{-\pi T}.
\end{align*}
On the vertical edge $z=T+1/2 + \bi y$ ($-T \le y \le T$), since $T \in \bbZ$, we have
\[
\left|  \left( \frac{\pi}{\sin \pi z} \right)^2 \Big( 3a - 2b \pi \cot(\pi z) \Big)  \right| \le 10^3(|a|+|b|) e^{-2\pi |y|}.
\]
Using Cauchy's residue theorem for $C_T$ and letting $T \to +\infty$, we obtain
\begin{align}
&\ \frac{1}{2\pi\bi}\int_{2n+\frac{1}{2}-\bi\infty}^{2n+\frac{1}{2}+\bi\infty} R_n(z)P(z) \left( \frac{\pi}{\sin \pi z} \right)^2 \Big( 3a - 2b \pi \cot(\pi z) \Big) \rd z \notag\\
=&\ - \sum_{m= 2n+1}^{\infty} \operatorname{Res}_{z=m} \left( R_n(z)P(z) \left( \frac{\pi}{\sin \pi z} \right)^2 \Big( 3a - 2b \pi \cot(\pi z) \Big) \right). \label{6.1}
\end{align}

At any $m \in \bbZ_{\ge 2n+1}$, we have 
\begin{align*}
\pi\cot(\pi z) &= \frac{1}{(z-m)} - \frac{\pi^2}{3}(z-m) + O((z-m)^3),  \\
\left( \frac{\pi}{\sin \pi z} \right)^2 &= \frac{1}{(z-m)^2} + \frac{\pi^2}{3} + O((z-m)^2), \\
R_n(z)P(z) &= (R_nP)(m) + (R_nP)'(m) \cdot (z-m) + \frac{1}{2} (R_nP)''(m) \cdot (z-m)^2 + O((z-m)^3).
\end{align*}
Therefore, 
\begin{align}
&\ \operatorname{Res}_{z=m} \left( R_n(z)P(z) \left( \frac{\pi}{\sin \pi z} \right)^2 \Big( 3a - 2b \pi \cot(\pi z) \Big) \right) \notag\\
=&\ 3a \cdot (R_nP)'(m) -b \cdot (R_nP)''(m). \label{6.2}
\end{align}
Substituting \eqref{6.2} into \eqref{6.1}, we obtain 
\begin{align*}
&\ \frac{1}{2\pi\bi}\int_{2n+\frac{1}{2}-\bi\infty}^{2n+\frac{1}{2}+\bi\infty} R_n(z)P(z) \left( \frac{\pi}{\sin \pi z} \right)^2 \Big( 3a - 2b \pi \cot(\pi z) \Big) \rd z \notag\\
=&\ -3a \sum_{m=2n+1}^{\infty} (R_nP)'(m) + b \sum_{m=2n+1}^{\infty} (R_nP)''(m).
\end{align*}
By \eqref{def_Rn(t)}, we have $(R_nP)'(m)=(R_nP)''(m)=0$ for $m \in \{1,2,\ldots,2n\}$. So
\begin{align*}
&\ \frac{1}{2\pi\bi}\int_{2n+\frac{1}{2}-\bi\infty}^{2n+\frac{1}{2}+\bi\infty} R_n(z)P(z) \left( \frac{\pi}{\sin \pi z} \right)^2 \Big( 3a - 2b \pi \cot(\pi z) \Big) \rd z \notag\\
=&\ -3a \sum_{m=1}^{\infty} (R_nP)'(m) + b \sum_{m=1}^{\infty} (R_nP)''(m) = \mathcal{L}_n(P).
\end{align*}
The last step used \eqref{def_Lambda}.
\end{proof}

Next, we estimate the rational function $R_n(z)$ on the vertical line $\operatorname{Re} z = 2n+1/2$.

\begin{lemma}\label{lem_6.2}
For any $y \in \mathbb{R}$, we have
\[
\left| R_n\left( 2n+\frac{1}{2} +\bi ny \right) \right| \le \frac{10}{n} \cdot  e^{nG(y)},
\]
where the function $G(y)$ is given by
\begin{equation}\label{def_G}
G(y) \coloneq 5\log(y^2+4) - 8\log(y^2+64) +5y\arctan \frac{2}{y} -2y \arctan\frac{8}{y} + 12\log 6 -6 \log 2.
\end{equation}
The value $G(0)$ is understood as $\lim_{y \to 0} G(y)$.
\end{lemma}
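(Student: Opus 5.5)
Write $z = 2n+\tfrac12+\bi ny$. We write $R_n$ via Gamma functions and apply Stirling's formula. We have
\[
(z-2n)_{2n} = \frac{\Gamma(z)}{\Gamma(z-2n)}, \qquad (z)_{6n+1} = \frac{\Gamma(z+6n+1)}{\Gamma(z)},
\]
so
\[
|R_n(z)| = \frac{(6n)!^2}{(2n)!^3}\cdot \frac{|\Gamma(z)|^5}{|\Gamma(z-2n)|^3\,|\Gamma(z+6n+1)|^2}.
\]
The arguments are $z-2n = \tfrac12+\bi ny$, $z = 2n+\tfrac12+\bi ny$, and $z+6n+1 = 8n+\tfrac32+\bi ny$.

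\textbf{Step 1: the small-argument factor.} For $\Gamma(\tfrac12+\bi ny)$ we use the exact identity $|\Gamma(\tfrac12+\bi s)|^2 = \pi/\cosh(\pi s)$. Hence $|\Gamma(z-2n)|^{-3} = (\cosh \pi n y/\pi)^{3/2}$, which is at most $\pi^{-3/2}e^{3\pi n|y|/2}$ up to a constant factor.

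\textbf{Step 2: the large-argument factors.} For the other two factors we use a Stirling bound with explicit error, valid uniformly in $\operatorname{Re} w\ge 1$:
\[
\log|\Gamma(w)| = \operatorname{Re}\big((w-\tfrac12)\log w - w\big) + \tfrac12\log 2\pi + O(1/|w|).
\]
Write $w = n(\alpha+\bi y)+\beta$ with $\alpha\in\{2,8\}$. Then
\[
\operatorname{Re}(w\log w) = n\Big(\tfrac{\alpha}{2}\log\big(n^2(\alpha^2+y^2)\big) - y\arg(\alpha+\bi y)\Big) + O(\log n),
\]
and $\arg(\alpha+\bi y) = \tfrac{\pi}{2}\operatorname{sgn} y - \arctan(y/\alpha)$, which rewrites as $\arctan(\alpha/y)$ terms. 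The $-y\cdot\tfrac{\pi}{2}\operatorname{sgn}(y)$ parts give $n|y|\pi/2\cdot(-5+2)$, which cancels exactly against the $+3\pi n|y|/2$ from Step 1. This is the consistency check that $G$ carries no $|y|$ term.

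\textbf{Step 3: collecting the exponent.} The $n\log n$ terms cancel: $5\cdot 2n - 2\cdot 8n$ from the $\Gamma$'s balances $\log\big((6n)!^2/(2n)!^3\big)\approx 12n\log 6n - 6n\log 2n - 6n$, and the linear $-w$ terms combine likewise. What remains is
\[
n\Big(5\log(y^2+4) - 8\log(y^2+64) + 5y\arctan\tfrac{2}{y} - 2y\arctan\tfrac{8}{y} + 12\log 6 - 6\log 2\Big) = nG(y),
\]
since $\tfrac{\alpha}{2}\log(\alpha^2+y^2)$ with coefficients $5\cdot\tfrac22$ and $2\cdot\tfrac82$ gives $5\log$ and $8\log$.

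\textbf{Step 4: polynomial prefactors.} The remaining factors come from the $-\tfrac12\log w$ terms, the $\beta$-shifts, the Stirling factorial corrections, and the $\sqrt{2\pi}$'s. Their net power of $n$ is $|w_2|^{5(2n)\cdots}$: precisely, the $(w-\tfrac12)$ shifts contribute $|z|^{5\cdot 0}\,|z+6n+1|^{-2\cdot 1}\cdot(\ldots)$. Together with $\sqrt{n}$ factors from the factorials, this leaves roughly $n^{-1}$ times a function bounded uniformly in $y$ (it decays like $(y^2+64)^{-1}$, times bounded ratios).

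The main obstacle is this last bookkeeping: making the constant uniformly $\le 10$ for all $n$ and all $y$, including small $n$ where Stirling errors are not negligible. I would handle it with explicit inequalities: Robbins' bounds for factorials, and the Binet form $\log\Gamma(w) = (w-\tfrac12)\log w - w + \tfrac12\log 2\pi + \mu(w)$ with $|\mu(w)|\le 1/(12\operatorname{Re} w)$ for $\operatorname{Re} w>0$, together with $\cosh\le e^{|\cdot|}$. Each error is an $e^{O(1/n)}$ factor that can be bounded explicitly. The $y=0$ case follows by continuity, since $y\arctan(c/y)\to 0$.
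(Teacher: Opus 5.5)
Your approach is correct and genuinely different from the paper's. The paper never passes to $\Gamma$. It writes $\log|R_n(2n+\tfrac12+\bi ny)|$ directly as the factorial term plus $\tfrac32\sum_{k=1}^{2n} g(\frac{k-1/2}{n})-\sum_{j=0}^{6n} g(2+\frac{j+1/2}{n})$, with $g(x)=\log(x^2+y^2)$. It then uses only that $g$ is increasing on $(0,\infty)$ to bound the first sum above and the second below by integrals, reads $G$ off the primitive $x\log(x^2+y^2)-2x+2y\arctan(x/y)$, and finishes with Robbins' inequality. That argument is completely elementary and automatically uniform in $y$ and $n$. It is lossy, though: the endpoint comparison near $x=0$ costs a factor $(4n)^{3/2}$, which is why only $10/n$ comes out. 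Your route needs heavier (though standard) inputs, namely $|\Gamma(\tfrac12+\bi s)|^2=\pi/\cosh\pi s$ and Binet's bound $|\mu(w)|\le 1/(12\operatorname{Re}w)$. In return it is exact up to the $\mu$'s, makes the cancellation of the $\tfrac{3\pi}{2}n|y|$ terms transparent, and gives the true order of the prefactor.

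Two corrections to your write-up. First, $\arg(\alpha+\bi y)=\tfrac{\pi}{2}\operatorname{sgn}y-\arctan(\alpha/y)$, not $-\arctan(y/\alpha)$. Second, Step 4 is garbled and its power count is off. The net power is $n^{-5/2}$: $n^{-2}$ comes from the extra $|w_2|^{-2}$ (since $w_2-\tfrac12=8n+1+\bi ny$) and $n^{-1/2}$ from the factorials. This only helps.

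The bookkeeping you flag as the obstacle closes cleanly. For $y>0$ write $w_1=n(u+\bi y)$ and $w_2=n(v+\bi y)$ with $u=2+\frac{1}{2n}$ and $v=8+\frac{3}{2n}$. Apart from the $\log n$, $\pi ny$ and constant terms, $5\log|\Gamma(w_1)|$ contributes $5n\psi(u)$ and $-2\log|\Gamma(w_2)|$ contributes $-n\chi(v)-\log(v^2+y^2)$. Here $\psi(c)=\log(c^2+y^2)+y\arctan(c/y)-c$ and $\chi(c)=8\log(c^2+y^2)+2y\arctan(c/y)-2c$.

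Since $\psi'(c)=c(2-c)/(c^2+y^2)$ and $\chi'(c)=2c(8-c)/(c^2+y^2)$, you get $\psi(u)\le\psi(2)$ and $n(\chi(8)-\chi(v))\le n(v-8)^2/8=9/(32n)$. Moreover $5n\psi(2)-n\chi(8)+n(12\log 6-6\log 2-6)=nG(y)$ exactly.

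It remains to add $9/(32n)$ to the leftover constants. These are $\log\frac{3}{2\sqrt{\pi}}+\frac{1}{36n}$ from Robbins, $\tfrac32\log 2$ from the $\sqrt{2\pi}$'s and $\pi^{-3/2}$, $-\log(v^2+y^2)\le-\log 64$, and $5|\mu(w_1)|+2|\mu(w_2)|$. The total is less than $-2.7$ for every $n\ge 1$. Hence $|R_n|\le e^{-2.7}n^{-5/2}e^{nG(y)}$, well inside $10/n$.
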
  

\begin{proof}
By symmetry and continuity, we may fix $y>0$.
By \eqref{def_Rn(t)}, we have
\begin{align}
&\ \log \left| R_n\left( 2n+\frac{1}{2} +\bi ny \right) \right| \notag\\
=&\ \log \frac{(6n)!^2}{(2n)!^3} + \frac{3}{2}\sum_{k=1}^{2n} \log\left( n^2y^2 + \left(k-\frac{1}{2}\right)^2 \right) - \sum_{j=0}^{6n}\log\left( n^2y^2 + \left(2n+j+\frac{1}{2}\right)^2 \right) \notag\\
=&\ \log \frac{(6n)!^2}{n^{6n+2} (2n)!^3} + \frac{3}{2} \sum_{k=1}^{2n} g\left( \frac{k-\frac{1}{2}}{n} \right) - \sum_{j=0}^{6n} g\left( 2 + \frac{j+\frac{1}{2}}{n} \right),  \label{6.4}
\end{align}
where the function $g$ is given by
\[
g(x) \coloneq \log(x^2+y^2), \quad x \in \mathbb{R}. 
\]
It is natural to compare the sum of values of $g$ with its integral.
Note that 
\[
x\log(x^2+y^2) -2x + 2y\arctan \frac{x}{y}
\]
is a primitive function of $g(x)$. 
Since $g(x)$ is increasing on $(0,+\infty)$, we obtain
\begin{align}
\sum_{k=1}^{2n} g\left( \frac{k-\frac{1}{2}}{n} \right) &\le \frac{1}{2} g\left( 2 -\frac{1}{2n}\right) + n\int_{\frac{1}{2n}}^{2} g(x)\,\rd x \le \frac{1}{2}g(2) + n\int_{\frac{1}{2n}}^{2} g(x)\,\rd x \notag\\
&= \left( 2\log(y^2+4) - 4 + 2y\arctan \frac{2}{y}\right) n + \frac{1}{2}\log \frac{y^2+4}{y^2+\frac{1}{4n^2}} + 1 -2ny \arctan \frac{1}{2ny} \notag\\
&\le \left( 2\log(y^2+4) - 4 + 2y\arctan \frac{2}{y}\right) n + \log(4n) + 1, \label{6.5}
\end{align}
and
\begin{align}
\sum_{j=0}^{6n} g\left( 2 + \frac{j+\frac{1}{2}}{n} \right)  &\ge \frac{1}{2}g\left( 2+\frac{1}{2n} \right) + \frac{1}{2}g\left( 8+\frac{1}{2n} \right)+ n \int_{2}^{8} g(x)\,\rd x    \notag\\ 
&> \left( 8\log(y^2+64) -  2\log(y^2+4) + 2y\arctan\frac{8}{y} -2y\arctan \frac{2}{y} -12 \right)n \notag \\
&\quad +\log 2 + \log 8. \label{6.6}
\end{align}
Substituting \eqref{6.5} and \eqref{6.6} into \eqref{6.4}, and using the well-known inequality \cite{Robbins1955}: 
\[
\sqrt{2\pi m} \left(\frac{m}{e}\right)^m \le m! \le \sqrt{2\pi m} \left(\frac{m}{e}\right)^m e^{\frac{1}{12m}}, \quad m\in \bbZ_{>0},
\]
we easily complete the proof.
\end{proof}

\begin{lemma}\label{lem_6.3}
For any $z=2n+1/2+\bi x$ with $x \in \mathbb{R}$, we have
\[
\left| \frac{1}{2\pi} R_n(z) \left( \frac{\pi}{\sin \pi z} \right)^2 \Big( 3a - 2b \pi \cot(\pi z) \Big)  \right| \le 10^{3}\left( |a|+|b| \right) n^{-1} \cdot e^{c_1 n} e^{-c_2 |x|},
\]
where 
\begin{equation}\label{def_c1c2}
c_1 = G\left(\frac{4}{5}\right) - \frac{4}{5} G'\left( \frac{4}{5} \right) < 0, \qquad c_2 = 2\pi - G'\left( \frac{4}{5} \right)>0.
\end{equation}
The function $G(y)$ is given by \eqref{def_G}.
\end{lemma}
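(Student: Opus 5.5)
The plan is to bound the two factors separately and then use a tangent-line (convexity) argument for $G$. Write $z=2n+\tfrac12+\bi x$ and put $y=x/n$, so that $z=2n+\tfrac12+\bi ny$. Lemma \ref{lem_6.2} then gives
\[
|R_n(z)|\le \frac{10}{n}e^{nG(x/n)}.
\]

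\textbf{The trigonometric factor.} Since $\operatorname{Re} z=2n+\tfrac12$ is a half-integer, $\sin\pi z=\pm\cosh(\pi x)$ and $\cot(\pi z)=\mp\bi\tanh(\pi x)$. Hence
\[
\left|\frac{\pi}{\sin\pi z}\right|^2=\frac{\pi^2}{\cosh^2(\pi x)}\le 4\pi^2e^{-2\pi|x|},
\qquad
|3a-2b\pi\cot(\pi z)|\le 3|a|+2\pi|b|.
\]
Multiplying, and including the factor $1/(2\pi)$, the whole expression is at most
\[
\frac{1}{2\pi}\cdot 4\pi^2\cdot(3|a|+2\pi|b|)\cdot\frac{10}{n}\cdot e^{nG(x/n)-2\pi|x|}\le 10^3(|a|+|b|)n^{-1}e^{nG(x/n)-2\pi|x|}.
\]

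\textbf{The exponent.} It remains to show $nG(x/n)-2\pi|x|\le c_1n-c_2|x|$. Since $G$ is even, it suffices to prove the tangent-line inequality
\[
G(y)\le G(y_0)+G'(y_0)(y-y_0)\quad\text{for all } y\ge 0,\qquad y_0=\tfrac45 .
\]
Indeed, multiplying by $n$ and substituting $y=|x|/n$ gives $nG(|x|/n)\le c_1n+G'(y_0)|x|$, and then $-2\pi|x|+G'(y_0)|x|=-c_2|x|$.

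To obtain the tangent-line inequality, differentiate $G$ termwise. The logarithm terms cancel against the derivatives of the $y\arctan(c/y)$ terms, leaving
\[
G'(y)=5\arctan\frac{2}{y}-2\arctan\frac{8}{y}.
\]
Then
\[
G''(y)=-\frac{10}{y^2+4}+\frac{16}{y^2+64}=\frac{6y^2-576}{(y^2+4)(y^2+64)},
\]
which is negative for $y<\sqrt{96}$. So $G$ is concave on $[0,\sqrt{96}]$ and convex beyond, and the tangent at $y_0$ lies above $G$ on $[0,\sqrt{96}]$. For $y>\sqrt{96}$, $G'(y)$ is increasing toward its limit $0$, so $G'(y)<0$. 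Also $G'(4/5)=5\arctan(5/2)-2\arctan 10\approx 5.95-2.94>0$. The tangent line has positive slope and $G$ is decreasing on $[\sqrt{96},\infty)$, so the inequality persists there once it holds at $\sqrt{96}$, which follows from concavity.

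\textbf{Main obstacle: the signs of $c_1$ and $c_2$.} The claim $c_2=2\pi-G'(4/5)>0$ amounts to $3.01<6.28$, which is clear. The claim $c_1<0$ is the delicate numerical step. One must evaluate
\[
c_1=5\log 4.64-8\log 64.64+4\arctan 2.5-1.6\arctan 10+12\log 6-6\log 2-\tfrac45 G'(0.8)
\]
with care. This gives roughly $7.67-33.33+4.76-2.35+21.50-4.16-4.81\approx-10.7<0$. I would verify it with rigorous interval bounds on each term.
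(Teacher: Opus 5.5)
Your proposal is correct and follows essentially the same route as the paper: the half-integer trigonometric bounds, Lemma~\ref{lem_6.2} with $y=x/n$, and the tangent-line bound $G(y)\le c_1+G'(\tfrac45)\,y$ obtained from the sign pattern of $G''$. Your concavity-plus-monotonicity phrasing is equivalent to the paper's sign analysis of $G'(y)-G'(\tfrac45)$. There is one numerical slip: $\tfrac45 G'(\tfrac45)\approx 2.41$, not $4.81$. In fact $y_0G'(y_0)$ cancels the two arctangent terms of $G(y_0)$ exactly, so $c_1=5\log\tfrac{116}{25}-8\log\tfrac{1616}{25}+12\log 6-6\log 2\approx -8.33$, which is negative with a wide margin and needs no delicate interval arithmetic.
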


\begin{proof}
We have 
\begin{align*}
\left| \frac{\pi}{\sin \pi z} \right| &= \frac{2\pi}{e^{\pi x} + e^{-\pi x}} \le 2\pi e^{-\pi|x|}, \\
\left|  \cot \pi z  \right| &= \left| \frac{e^{\pi x} - e^{-\pi x}}{e^{\pi x} + e^{-\pi x}} \right| \le 1.
\end{align*}
By Lemma \ref{lem_6.2} with $y = x/n$, we have
\[
|R_n(z)| \le \frac{10}{n} e^{nG(x/n)} = \frac{10}{n} e^{nG(|x|/n)},
\]
where the function $G$ is given by \eqref{def_G}. 
Putting the above estimates together, we obtain
\begin{equation}\label{6.8}
\left| \frac{1}{2\pi} R_n(z) \left( \frac{\pi}{\sin \pi z} \right)^2 \Big( 3a - 2b \pi \cot(\pi z) \Big)  \right| \le 10^{3}\left(|a|+|b|\right) n^{-1} \cdot \exp\left( -2\pi |x| + n G\left(\frac{|x|}{n}\right) \right).
\end{equation}

Now, differentiating \eqref{def_G} gives
\[
G'(y) = 5 \arctan \frac{2}{y} - 2 \arctan \frac{8}{y}, \quad G''(y)= \frac{6(y^2 - 96)}{(y^2+4)(y^2+64)}, \quad y \in (0,+\infty).
\]
Let 
\[
\lambda_0 = G'\left(\frac{4}{5}\right) = 5 \arctan \frac{5}{2} - 2 \arctan 10 = 3.009194\ldots.
\]
The function $G'(y) - \lambda_0$ is decreasing on $(0,\sqrt{96}]$ and increasing on $[\sqrt{96},+\infty)$ with $\lim_{y \to +\infty} (G'(y)-\lambda_0) = -\lambda_0 < 0$. 
Therefore, $G'(y)-\lambda_0$ is positive on $(0,4/5)$ and negative on $(4/5,+\infty)$.
So 
\[
G(y) - \lambda_0 y \le G\left( \frac{4}{5} \right) - \frac{4}{5}\lambda_0 = c_1, \quad \text{for all } y \in [0,+\infty).
\]
Thus,
\[
n G\left( \frac{|x|}{n} \right) \le c_1 n + \lambda_0 |x|.
\]
Substituting this into \eqref{6.8} completes the proof.
\end{proof}

\begin{lemma}\label{lem_6.4}
For any integer $n \ge 2 \cdot 10^{3}(|a|+|b|)$, we have
\[
|J_n| \le \frac{e^{3c_1 n^2}}{2^{3n}(3n)!\prod_{k=0}^{3n-1}k!^2} \int_{\mathbb{R}^{3n}} \prod_{1 \le i < j  \le 3n} (x_i - x_j)^2 \prod_{\ell = 1}^{3n} e^{-c_2|x_{\ell}|} \,\rd x_\ell,
\]
where $c_1,c_2$ are given by \eqref{def_c1c2}.
\end{lemma}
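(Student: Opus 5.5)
The plan is to combine the contour representation of Lemma \ref{lem_6.1} with Heine's identity for Hankel determinants. Write
\[
w(z)\coloneq \frac{1}{2\pi}R_n(z)\Big(\frac{\pi}{\sin\pi z}\Big)^2\big(3a-2b\pi\cot(\pi z)\big),
\]
and parametrize the line by $z=2n+\tfrac12+\bi x$, so that $\rd z=\bi\,\rd x$. Lemma \ref{lem_6.1} then gives
\[
\mathcal{L}_n(t^{i+j})=\int_{\mathbb{R}}z(x)^{i+j}\,w(z(x))\,\rd x .
\]
Hence the matrix in \eqref{def_Jn} is the Hankel moment matrix of the complex measure $\mu(\rd x)=w(z(x))\,\rd x$ on $\mathbb{R}$, with moments of the variable $z(x)$.

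Heine's identity is purely algebraic, via multilinearity and antisymmetrization, so it holds for complex measures. It gives
\[
\det\Big(\int z^{i+j}\,\rd\mu\Big)_{0\le i,j<3n}=\frac{1}{(3n)!}\int_{\mathbb{R}^{3n}}\prod_{i<j}\big(z(x_i)-z(x_j)\big)^2\prod_{\ell}w(z(x_\ell))\,\rd x_\ell .
\]
Absolute convergence holds because $|w|$ decays like $e^{-c_2|x|}$. Since $z(x_i)-z(x_j)=\bi(x_i-x_j)$, the Vandermonde factor has absolute value exactly $\prod_{i<j}(x_i-x_j)^2$. I then take absolute values inside the integral and apply Lemma \ref{lem_6.3} to each factor:
\[
|w(z(x_\ell))|\le 10^3(|a|+|b|)\,n^{-1}e^{c_1n}e^{-c_2|x_\ell|}.
\]
With $3n$ factors this gives the prefactor $\big(10^3(|a|+|b|)/n\big)^{3n}e^{3c_1n^2}$. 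The hypothesis $n\ge 2\cdot10^3(|a|+|b|)$ makes $10^3(|a|+|b|)/n\le 1/2$, so the prefactor is at most $2^{-3n}e^{3c_1n^2}$. Dividing by the normalization $\prod_{k<3n}k!^2$ from \eqref{def_Jn} yields exactly the claimed bound.

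The only delicate points are bookkeeping. First, the factor $1/(2\pi\bi)$ against $\rd z=\bi\,\rd x$ leaves $1/(2\pi)$, which has been absorbed into $w$, matching the normalization in Lemma \ref{lem_6.3}. Second, the Heine identity must be stated for a complex-valued weight with the monomials evaluated at $z(x)$ rather than at $x$. I would spell this out: expand the determinant as
\[
\sum_\sigma \operatorname{sgn}\sigma\int\prod_\ell z_\ell^{\ell+\sigma(\ell)}\,\rd\mu(x_\ell),
\]
write this as the integral of $\prod_\ell z_\ell^{\ell}\cdot\det\big(z_\ell^{j}\big)$, and symmetrize over permutations of the variables. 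This step is routine. I expect no real obstacle beyond verifying that Fubini's theorem applies, which follows from the exponential decay supplied by Lemma \ref{lem_6.3}.
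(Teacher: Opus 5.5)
Your proposal is correct and follows the paper's proof essentially step for step. Lemma \ref{lem_6.1} rewrites the entries as moments of $z(x)=2n+\tfrac12+\bi x$ against the complex weight $\frac{1}{2\pi}R_n(z)(\pi/\sin\pi z)^2(3a-2b\pi\cot\pi z)$. Heine's identity then gives a $3n$-fold integral whose Vandermonde factor has modulus $\prod_{i<j}(x_i-x_j)^2$. Finally, Lemma \ref{lem_6.3} together with $10^3(|a|+|b|)/n\le 1/2$ produces the factor $2^{-3n}e^{3c_1n^2}$.
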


\begin{proof}
By \eqref{def_Jn} and Lemma \ref{lem_6.1}, we have
\[
J_n = \frac{1}{\prod_{k=0}^{3n-1}k!^2} \det \left( \int_{-\infty}^{+\infty} \left(2n+\frac{1}{2} + \bi x\right)^{i+j} \omega(x) \,\rd x \right)_{0 \le i,j < 3n},
\]
where 
\[
\omega(x) = \frac{1}{2\pi} R_n(z) \left( \frac{\pi}{\sin \pi z} \right)^2 \Big( 3a - 2b \pi \cot(\pi z) \Big)  \Big|_{z = 2n+1/2+\bi x}.
\]
By Heine's identity \cite{Heine1878} (which follows from determinant expansion and Fubini's theorem; see Zudilin \cite[p. 304]{Zudilin2017}), we obtain
\[
J_n = \frac{1}{\prod_{k=0}^{3n-1}k!^2} \cdot \frac{\bi^{3n(3n-1)}}{(3n)!} \int_{\mathbb{R}^{3n}} \prod_{1 \le i < j  \le 3n} (x_i - x_j)^2 \prod_{\ell = 1}^{3n} \omega(x_\ell)\,\rd x_\ell.
\]
By Lemma \ref{lem_6.3}, 
\[
|J_n| \le \left( \frac{10^3(|a|+|b|)}{n} \right)^{3n} \cdot \frac{e^{3c_1 n^2}}{(3n)!\prod_{k=0}^{3n-1}k!^2} \int_{\mathbb{R}^{3n}} \prod_{1 \le i < j  \le 3n} (x_i - x_j)^2 \prod_{\ell = 1}^{3n} e^{-c_2|x_{\ell}|} \,\rd x_\ell.
\]
This completes the proof.
\end{proof}

We need one last lemma, which is an evaluation of a multiple integral.

\begin{lemma}\label{lem_6.5}
We have
\[
\frac{1}{2^{3n}(3n)!\prod_{k=0}^{3n-1}k!^2} \int_{\mathbb{R}^{3n}} \prod_{1 \le i < j  \le 3n} (x_i - x_j)^2 \prod_{\ell = 1}^{3n} \frac{2}{e^{c_2x_\ell} + e^{-c_2x_\ell}} \,\rd x_\ell = \left( \frac{\pi}{2c_2} \right)^{9n^2}.
\]
\end{lemma}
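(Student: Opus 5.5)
The plan is to write the left-hand side as a Hankel determinant of moments (Heine's identity in reverse) and to evaluate that determinant as a product of squared norms of monic orthogonal polynomials for a hyperbolic-secant weight whose recurrence coefficients are classical.

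\emph{Step 1 (rescaling).} Put $N=3n$ and $w(y)=2/(e^{y}+e^{-y})=1/\cosh y$. Substitute $x_\ell=y_\ell/c_2$ in the integral. The $N$ differentials give $c_2^{-N}$. The squared Vandermonde is homogeneous of degree $N(N-1)$ and gives $c_2^{-N(N-1)}$. Together this is $c_2^{-N^2}=c_2^{-9n^2}$. It therefore suffices to prove
\[
\frac{1}{N!}\int_{\mathbb{R}^{N}}\prod_{1\le i<j\le N}(y_i-y_j)^2\prod_{\ell=1}^{N}\frac{\rd y_\ell}{\cosh y_\ell}=2^{N}\Big(\frac{\pi}{2}\Big)^{N^2}\prod_{k=0}^{N-1}k!^2 .
\]

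\emph{Step 2 (Heine and orthogonal polynomials).} By Heine's identity (as used in Lemma~\ref{lem_6.4}), the left-hand side equals $\det(\mu_{i+j})_{0\le i,j<N}$ with $\mu_m=\int_{\mathbb{R}} y^m\,\rd y/\cosh y$. The weight is positive, so the monic orthogonal polynomials $p_k$ exist. The standard Gram--Schmidt/LU factorization gives $\det(\mu_{i+j})_{0\le i,j<N}=\prod_{k=0}^{N-1}h_k$, where $h_k=\int p_k^2\,w$. The weight is even, so the three-term recurrence has the form $p_{k+1}=y\,p_k-\beta_k p_{k-1}$, and then $h_k=h_0\beta_1\cdots\beta_k$. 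Here $h_0=\int \rd y/\cosh y=\pi$.

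\emph{Step 3 (the recurrence coefficients).} I claim $\beta_k=\pi^2k^2/4$. After the change of variable $y=\pi u$ this is equivalent to: the weight $1/\cosh(\pi u)=|\Gamma(\tfrac12+\bi u)|^2/\pi$ has monic recurrence coefficients $k^2/4$. This is the Meixner--Pollaczek family $P^{(\lambda)}_k(u;\pi/2)$ at $\lambda=\tfrac12$, for which the monic coefficients are $k(k+2\lambda-1)/4$.

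To keep the proof self-contained, I would instead argue through the moments. The $\mu_{2m}$ are, up to the factor $\pi^{2m+1}/2^{2m}$, the Euler (secant) numbers. The classical Stieltjes--Rogers continued fraction
\[
\sum_{m\ge0}E_{2m}z^{2m}=\cfrac{1}{1-\cfrac{1^2z^2}{1-\cfrac{2^2z^2}{1-\cdots}}}
\]
then identifies the J-fraction coefficients as $k^2$, and hence $\beta_k=(\pi/2)^2k^2$ for $w$. An alternative check of orthogonality uses the Mellin--Barnes shift $u\mapsto u\pm \bi/2$ applied to $|\Gamma(\tfrac12+\bi u)|^2$.

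\emph{Step 4 (assembly).} We get $h_k=\pi(\pi/2)^{2k}k!^2$, hence
\[
\prod_{k=0}^{N-1}h_k=\pi^{N}\Big(\frac{\pi}{2}\Big)^{N(N-1)}\prod_{k=0}^{N-1}k!^2=2^N\Big(\frac{\pi}{2}\Big)^{N^2}\prod_{k=0}^{N-1}k!^2,
\]
which is exactly the identity required in Step 1.

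The main obstacle is Step 3, namely justifying $\beta_k=\pi^2k^2/4$. I would cite the Meixner--Pollaczek norm formula, or the Stieltjes continued fraction for the secant numbers, rather than rederive it. Everything else is bookkeeping of powers of $c_2$, $\pi$ and $2$.
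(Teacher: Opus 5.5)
Your proposal is correct and follows essentially the same route as the paper: you rescale, apply Heine's identity, and use the norms of the Meixner--Pollaczek polynomials with $\lambda=1/2$, $\phi=\pi/2$. The paper instead rescales to the unit-mass weight $1/\cosh(\pi y)$ and uses orthonormal polynomials with leading coefficient $2^k/k!$, which is equivalent to your monic norms $h_k=\pi(\pi/2)^{2k}k!^2$ for $1/\cosh y$; your bookkeeping of the powers of $c_2$, $\pi$ and $2$ checks out, and the secant-number continued fraction is a valid alternative justification of $\beta_k=\pi^2k^2/4$.
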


\begin{proof}
Apply the change of variables $x_\ell = (\pi/c_2) y_{\ell}$, $\ell=1,2,\ldots,3n$. 
The assertion of this lemma is equivalent to 
\begin{equation}\label{6.9}
\frac{1}{(3n)!} \int_{\mathbb{R}^{3n}} \prod_{1 \le i < j  \le 3n} (y_i - y_j)^2 \prod_{\ell = 1}^{3n} \frac{2}{e^{\pi y_\ell} + e^{-\pi y_\ell}} \,\rd y_\ell = \prod_{k=0}^{3n-1} \frac{k!^2}{4^k}.
\end{equation}  
For the weight function $\omega(y)=2/(e^{\pi y}+e^{-\pi y})$ on $\mathbb{R}$, the corresponding orthogonal polynomials are known \cite{EMOT1953,Ismail2009,Meixner1934,Pollaczek1950} as Meixner--Pollaczek polynomials (with parameters $\lambda =1/2$ and $\phi=\pi/2$). 
By \cite[Equations (5.9.8) and (5.9.9)]{Ismail2009}, there exist polynomials $P_k(y) \in \bbQ[y]$ ($k \in \bbZ_{\ge 0}$) with $\deg P_k = k$ and leading coefficients $[y^k](P_k) = 2^k/k!$ such that
\[
\int_{\mathbb{R}} P_j(y) P_k(y) \frac{2}{e^{\pi y}+e^{-\pi y}} \,\rd y = \delta_{j,k}, \quad j,k \in \bbZ_{\ge 0}.
\]
Then, by Heine's identity again,
\[
\frac{1}{(3n)!} \int_{\mathbb{R}^{3n}} \det\Big( P_k(y_{j+1}) \Big)_{0 \le j,k < 3n}^2 \prod_{\ell = 1}^{3n} \frac{2}{e^{\pi y_\ell} + e^{-\pi y_\ell}} \,\rd y_\ell = 1.
\]
Since the transfer matrix from $1,y,y^2,\ldots,y^{3n-1}$ to $P_0(y),P_1(y),\ldots,P_{3n-1}(y)$ is triangular with diagonal entries $2^k/k!$ ($k=0,1,\ldots,3n-1$), we have
\[
\det\Big( P_k(y_{j+1}) \Big)_{0 \le j,k < 3n} = \prod_{k=0}^{3n-1} \frac{2^k}{k!} \cdot \prod_{1 \le i < j  \le 3n} (y_j - y_i),
\]
which proves \eqref{6.9}.
\end{proof}

\begin{proposition}\label{prop_real}
For any integer $n \ge 2 \cdot 10^{3}(|a|+|b|)$, we have 
\[
|J_n| \le e^{ -31.61 \cdot n^2}.
\]
\end{proposition}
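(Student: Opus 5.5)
We combine Lemma \ref{lem_6.4} with Lemma \ref{lem_6.5}, after comparing the two weights $e^{-c_2|x|}$ and $2/(e^{c_2x}+e^{-c_2x})$. Since $e^{c_2|x|}\le e^{c_2x}+e^{-c_2x}$, we have the pointwise bound
\[
e^{-c_2|x|} \le \frac{2}{e^{c_2x}+e^{-c_2x}}.
\]
The Vandermonde square $\prod_{i<j}(x_i-x_j)^2$ is nonnegative, so this replacement can only increase the integral in Lemma \ref{lem_6.4}. Applying Lemma \ref{lem_6.5} then gives, for $n\ge 2\cdot 10^3(|a|+|b|)$,
\[
|J_n| \le e^{3c_1n^2}\left(\frac{\pi}{2c_2}\right)^{9n^2}
= \exp\Big( \big(3c_1 + 9\log\tfrac{\pi}{2c_2}\big) n^2\Big).
\]

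It remains to evaluate the constant numerically. By \eqref{def_c1c2}, $\lambda_0=G'(4/5)=3.009194\ldots$, hence $c_2=2\pi-\lambda_0\approx 3.27399$. We then need $c_1=G(4/5)-\tfrac45\lambda_0$, computed from \eqref{def_G} at $y=4/5$:
\[
G\left(\tfrac45\right)=5\log(4.64)-8\log(64.64)+4\arctan\tfrac52-\tfrac85\arctan 10+12\log 6-6\log 2 .
\]
The pieces are:
\begin{itemize}
\item $5\log 4.64\approx 7.6726$;
\item $8\log 64.64\approx 33.3476$;
\item $4\arctan(5/2)\approx 4.7617$;
\item $\tfrac85\arctan 10\approx 2.3535$;
\item $12\log6\approx 21.5011$;
\item $6\log2\approx 4.1589$.
\end{itemize}
Summing gives $G(4/5)\approx -5.9246$. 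Therefore $c_1\approx -5.9246-2.4074=-8.3320$ and $3c_1\approx -24.996$.

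Next, $\pi/(2c_2)\approx 1.5708/3.2740\approx 0.47978$. Then $\log(0.47978)\approx -0.7344$, so $9\log\frac{\pi}{2c_2}\approx -6.610$. The total exponent is about $-31.606\,n^2$.

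The main obstacle is the numerics: the margin against $-31.61$ is only a few thousandths. I would recompute $G(4/5)$ and $\lambda_0$ to six or more digits, rigorously, with interval bounds on the logarithms and arctangents. The intended inequality is $3c_1+9\log(\pi/(2c_2))\le -31.61$. If the rounded value lands marginally above $-31.61$, I would check the author's rounding, since the stated bound presumably reflects truncation of a value like $-31.61x$. No other step needs new ideas, because the factor $(10^3(|a|+|b|)/n)^{3n}\le 2^{-3n}$ was already absorbed in Lemma \ref{lem_6.4}.
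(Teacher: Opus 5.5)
Your reduction matches the paper's proof. You insert the pointwise bound $e^{-c_2|x|}\le 2/(e^{c_2x}+e^{-c_2x})$ into Lemma~\ref{lem_6.4} and then apply Lemma~\ref{lem_6.5}. This gives $|J_n|\le e^{-c_3n^2}$ with $c_3=9\log(2c_2/\pi)-3c_1$.

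The gap is in the only remaining step, the numerics. Your final exponent $3c_1+9\log(\pi/(2c_2))\approx-31.606$ is \emph{larger} than $-31.61$. So, as written, your argument does not imply the stated bound. ``Checking the author's rounding'' cannot replace a correct evaluation. (For the main theorem, $31.606>30.84$ would still suffice, but the proposition as stated would remain unproved.)

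The error lies entirely in $G(4/5)$. The correct values of the pieces are
\[
5\log 4.64\approx 7.67357,\qquad 8\log 64.64\approx 33.35067,\qquad 4\arctan\tfrac52\approx 4.76116,\qquad \tfrac85\arctan 10\approx 2.35380.
\]
These give $G(4/5)\approx-5.92751$, not $-5.9246$. Your values $c_2\approx 3.27399$ and $9\log(\pi/(2c_2))\approx-6.6098$ are fine.

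You can also avoid the arctangents in $c_1$ altogether. The arctangent terms of $G(y)$ are exactly $y\,G'(y)$, since $G'(y)=5\arctan(2/y)-2\arctan(8/y)$. Hence they cancel in $c_1=G(4/5)-\frac45G'(4/5)$, leaving
\[
c_1=5\log\frac{116}{25}-8\log\frac{1616}{25}+12\log 6-6\log 2\approx-8.334865,
\]
so $3c_1\approx-25.004595$.

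For the other term, $2c_2/\pi=4+(4\arctan 10-10\arctan\frac52)/\pi\approx 2.0842873$, so $9\log(2c_2/\pi)\approx 6.609843$. Together these give $c_3\approx 31.614438$, matching the paper's $c_3=31.614437\ldots$. This clears $31.61$ by only about $0.0044$. Each term therefore has to be correct to at least four decimal places; several of your pieces were off in the third decimal, which was enough to flip the conclusion.
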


\begin{proof}
Since $c_2 > 0$ (see \eqref{def_c1c2}), we have
\[
e^{-c_2|x|} \le \frac{2}{e^{c_2x} + e^{-c_2x}}, \quad x \in \mathbb{R}.
\]
Substituting this into Lemma \ref{lem_6.4} and then using Lemma \ref{lem_6.5}, we obtain
\[
|J_n| \le e^{ - c_3 n^2}, \quad \text{for any } n \ge 2 \cdot 10^{3}(|a|+|b|),
\]
where 
\[
c_3 = 9 \log \frac{2c_2}{\pi} - 3c_1,
\]
and $c_1,c_2$ are given by \eqref{def_c1c2}.
Explicitly,
\begin{align*}
c_3 &= 9 \log \left(4+\frac{4\arctan 10-10 \arctan (5/2)}{\pi}\right) -3\left(5 \log \frac{116}{25}-8 \log \frac{1616}{25} + 12 \log 6 - 6\log 2\right) \\
&= 31.614437\ldots.
\end{align*}
\end{proof}

Now, Propositions \ref{prop_nonva}, \ref{prop_p_est}, and \ref{prop_real} together give a contradiction. 
So the assumption \eqref{def_abc} cannot hold.
This proves Theorem \ref{thm_main}.
We conclude with a remark.
\begin{remark}
One can show a better real estimate by shifting the contour and using the equilibrium measure (see \cite{Saff2010}).
For the $p$-adic estimate in Proposition \ref{prop_p_est}, by using sharper estimates in the summation over primes and a more detailed case analysis, one can deduce from Lemma \ref{lem_vpJn} a better arithmetic estimate.
Therefore, the margin in the main theorem is slightly larger than what we proved. 
Our approach is mainly intended to keep the proof as simple as possible.
\end{remark}

\vspace*{3mm}
\begin{flushright}
\begin{minipage}{158mm}\sc\footnotesize
School of Mathematical Sciences, Xiamen University, Fujian, China \\
{\it E-mail addresses}: \href{mailto:lilaimath@gmail.com}{{\tt lilaimath@gmail.com}}, \href{mailto:lilai@xmu.edu.cn}{{\tt lilai@xmu.edu.cn}} 
\vspace*{3mm}
\end{minipage}
\end{flushright}

\end{document}